\documentclass[11pt,table]{amsart}
\usepackage[margin=1.1in]{geometry}
\usepackage{amscd,amsmath,amsxtra,amsthm,amssymb,stmaryrd,xr,mathrsfs,mathtools,enumerate,commath, comment, orcidlink}
\usepackage{tikz}
\usetikzlibrary{arrows.meta,positioning}

\usetikzlibrary{calc}
\usepackage{tikz-3dplot}
\usepackage{stmaryrd}
\usepackage{multirow}
\usepackage{xcolor}
\usepackage{svg}
\usepackage{graphics}
\usepackage{longtable} 
\usepackage{pdflscape} 
\usepackage{booktabs}
\usepackage{hyperref}
\definecolor{vegasgold}{rgb}{0.77, 0.7, 0.35}
\definecolor{darkgoldenrod}{rgb}{0.72, 0.53, 0.04}
\definecolor{gold(metallic)}{rgb}{0.83, 0.69, 0.22}
\hypersetup{
 colorlinks=true,
 linkcolor=darkgoldenrod,
 filecolor=brown,      
 urlcolor=gold(metallic),
 citecolor=darkgoldenrod,
 }
\newtheorem{lthm}{Theorem}

\usepackage[all,cmtip]{xy}

\DeclareFontFamily{U}{wncy}{}
\DeclareFontShape{U}{wncy}{m}{n}{<->wncyr10}{}
\DeclareSymbolFont{mcy}{U}{wncy}{m}{n}
\DeclareMathSymbol{\Sh}{\mathord}{mcy}{"58}
\usepackage[T2A,T1]{fontenc}

\newtheorem{theorem}{Theorem}[section]
\newtheorem{lemma}[theorem]{Lemma}

\newtheorem*{theorem*}{Theorem}
\newtheorem*{ass*}{Assumption}
\newtheorem{definition}[theorem]{Definition}
\newtheorem{corollary}[theorem]{Corollary}
\newtheorem{remark}[theorem]{Remark}

\newtheorem{proposition}[theorem]{Proposition}

\newcommand{\Gal}{\op{Gal}}

\newcommand{\Z}{\mathbb{Z}}
\newcommand{\Q}{\mathbb{Q}}
\newcommand{\F}{\mathbb{F}}

\newcommand{\cO}{\mathcal{O}}

\newcommand{\op}[1]{\operatorname{#1}}

 \DeclareMathSymbol{\sha}{\mathord}{mcy}{"58}
 \makeatletter
\newcommand{\mylabel}[2]{#2\def\@currentlabel{#2}\label{#1}}
\makeatother

\numberwithin{equation}{section}

\title[Cyclotomic norm congruences]
{Cyclotomic norm congruences for zeta values and modular $L$-values}

\author[S.~Bhattacharya]{Shubhrajit Bhattacharya}
\address[Bhattacharya]{Department of Mathematics, The University of Chicago,
Eckhart Hall, 5734 S University Ave, Chicago, IL 60637, USA}
\email{shubhrajit@uchicago.edu}
\author[A.~Ray]{Anwesh Ray \orcidlink{0000-0001-6946-1559}}
\address[Ray]{Chennai Mathematical Institute, H1, SIPCOT IT Park,
Kelambakkam, Siruseri, Tamil Nadu 603103, India}
\email{anwesh@cmi.ac.in}

\author[R.~Sujatha]{R.~Sujatha \orcidlink{0000-0003-1221-0710}}
\address[Sujatha]{Department of Mathematics, University of British Columbia,
Vancouver, BC V6T 1Z2, Canada}
\email{sujatha@math.ubc.ca}

\begin{document}

\begin{abstract}
We study congruences between special values of zeta functions and modular
\(L\)-functions in cyclotomic towers.  The underlying mechanism is an
integral norm congruence for finite-level Iwasawa-theoretic elements,
which becomes stronger as one ascends the tower.  In the
\(\mathrm{GL}_1\) setting, this gives congruences for Dedekind zeta
values over totally real fields, with applications to higher
\(K\)-groups, generalized Bernoulli numbers, and Euler--Poincar\'e
characteristics.  In the \(\mathrm{GL}_2\) setting, we use
Mazur--Tate elements and their distribution relations to control the
primitive twisted special values appearing at each cyclotomic level.
Via Artin formalism, these give congruences for the corresponding
base-change \(L\)-values over successive cyclotomic fields.
\end{abstract}
\maketitle

\section{Introduction}

\par Zeta and $L$-functions are fundamental yet mysterious arithmetic objects, and congruences between their special values often provide deep arithmetic insights. The classical starting point is Kummer's
congruences for Bernoulli numbers. The study of the $p$-adic variation of generalized Bernoulli numbers led to the construction of the Kubota--Leopoldt \(p\)-adic
\(L\)-function \cite{KubotaLeopoldt} and to the systematic use of
\(p\)-adic measures and Iwasawa algebras, see for instance
\cite{Iwasawa1972,Washington1997,CoatesSujatha}.  For totally real
fields, the corresponding theory was developed by Cassou-Nogu\`es
\cite{CassouNogues}, Barsky \cite{Barsky}, and Deligne--Ribet
\cite{DeligneRibet}.
\par Let \(\Q_m/\mathbb Q\) be the subfield of degree
\(p^m\) in the cyclotomic \(\mathbb Z_p\)-extension of \(\mathbb Q\),
with \(p\geq5\). Clozel proved that
\[
        \zeta_{\Q_{m+1}}(-1)
        \equiv
        \zeta_{\Q_m}(-1)
        \pmod p;
\]
see \cite[Proposition~2.5]{ClozelCyclotomic}. The proof is geometric in
nature. Clozel considers finite Shimura sets attached to a definite quaternion
algebra along the cyclotomic tower. Counting Galois orbits gives a
congruence modulo \(p\) between the cardinalities at successive levels, while
a calculation of Tamagawa measures relates these cardinalities to the
special values \(\zeta_{\Q_m}(-1)\).  In the sequel
\cite{ClozelIwasawa}, Clozel explained that Coates had observed a different and more direct proof using Iwasawa's construction of $p$-adic $L$-functions. This viewpoint is closer to the norm argument developed here. We extend these congruences to higher powers of $p$, to a broader class of number fields, and to special zeta values at all negative odd integers. This constitutes the $\mathrm{GL}_1$ case of our method, which we then extend to a $\mathrm{GL}_2$ setting.

\par Let \(L\) be a totally real number field and let
\(\mathbb Q_\infty/\mathbb Q\) be the cyclotomic
\(\mathbb Z_p\)-extension. There is a unique integer \(e_L\geq0\)
such that $L\cap\mathbb Q_\infty
        =
        \mathbb Q_{e_L}$. Throughout, we use the relative
indexing $L^{(m)}
        :=
        L\mathbb Q_{e_L+m}$. Note that $\op{Gal}(L^{(m)}/L)\simeq \Z/p^m\Z$.
\begin{lthm}[Theorem~\ref{thm:totally-real-zeta-congruence}]
\label{lthm a}
Let \(L\) be a totally real number field, let \(p\) be an odd prime,
and write \(L^{(m)}=L\mathbb Q_{e_L+m}\), where
\(L\cap\mathbb Q_\infty=\mathbb Q_{e_L}\).  Let \(n\geq2\) be even
and suppose that \(d_{L,p}:=[L(\mu_p):L]\) does not divide \(n\). Then \(\zeta_{L^{(m)}}(1-n)\in\mathbb Z_p\) for every
\(m\geq0\), and \begin{equation}\label{thmacongruence}\zeta_{L^{(m+1)}}(1-n)\equiv\zeta_{L^{(m)}}(1-n)
\pmod{p^{m+1}}.\end{equation}
\end{lthm}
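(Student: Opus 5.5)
Write $K_m:=L^{(m)}$ and $G_m:=\Gal(K_m/L)\simeq\Z/p^m\Z$. The plan is to factor $\zeta_{K_m}(1-n)$ by Artin formalism and realize it as the image of a finite-level Iwasawa element, for which integrality and the norm congruence can be read off.

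By Artin formalism,
\[
\zeta_{K_m}(1-n)=\prod_{\chi\in\widehat{G_m}} L(1-n,\chi),
\]
where $\chi$ runs over characters of $G_m$, viewed as finite-order characters of $\Gal(L\Q_\infty/L)$. Since $n$ is even, each factor is, up to the Euler factors at primes above $p$, a value of the Deligne--Ribet / Cassou-Nogu\`es $p$-adic $L$-function. Concretely, let $\omega$ be the Teichm\"uller character and $\langle\cdot\rangle$ the cyclotomic projection. There is a pseudo-measure $\mu$ on $\Gal(L(\mu_{p^\infty})/L)$ with
\[
\int \chi\,\kappa^{n}\,d\mu=\prod_{\mathfrak p\mid p}\bigl(1-\chi(\mathfrak p)N\mathfrak p^{n-1}\bigr)L(1-n,\chi).
\]
The hypothesis $d_{L,p}\nmid n$ says $\omega^n$ is nontrivial on $\Gal(L(\mu_p)/L)$. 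I will use this in two ways.
\begin{itemize}
\item The $\omega^n$-component of $\mu$ is an honest measure, i.e.\ it has no pole. So there is a power series $F\in\Lambda=\Z_p[[T]]$, with $\Gamma=\Gal(L\Q_\infty/L)$ and $\gamma\mapsto 1+T$, such that $F(\chi(\gamma)u^{n}-1)$ interpolates the above values.
\item The Euler factors at $p$ disappear, or at least become units, after comparing with the complex $L$-values. I will handle this by noting that the $\chi(\mathfrak p)$ are $p$-power roots of unity and $N\mathfrak p^{n-1}\equiv0\pmod p$. Hence the Euler factor is a unit, and in fact $\equiv1$ modulo the maximal ideal.
\end{itemize}
To avoid dividing by Euler factors, I would instead work with the finite-level element $\theta_m\in\Z_p[G_m]$ obtained as the image of the $\omega^n$-twisted measure. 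Up to a unit it packages the values $L(1-n,\chi)$.

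Taking norms, $\zeta_{K_m}(1-n)$ equals, up to a $p$-adic unit $\epsilon_m$ (the product of the $p$-Euler factors),
\[
\prod_{\zeta^{p^m}=1}F(\zeta u^n-1)=N_m(F),
\]
which is the norm from $\Z_p[G_m]$ down to $\Z_p$ of $\theta_m$, i.e.\ the resultant of $F(u^n(1+T)-1)$ with $\omega_m(T)=(1+T)^{p^m}-1$. This is visibly in $\Z_p$, which gives the integrality statement. For the congruence, write $\Phi_{m+1}$ for the $p^{m+1}$-th cyclotomic polynomial in $1+T$. Then
\[
N_{m+1}=N_m\cdot\prod_{\zeta\text{ primitive }p^{m+1}}F(\zeta u^n-1).
\]
The extra factor is $\mathrm{Norm}_{\Q_p(\mu_{p^{m+1}})/\Q_p}$ of $F(\zeta u^n-1)$. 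Since $\zeta\equiv1$ modulo $(1-\zeta)$, we get $F(\zeta u^n-1)\equiv F(u^n-1)$ modulo $(1-\zeta)$. The norm of an element $a+(1-\zeta)b$ with $a\in\Z_p$ is $\equiv a^{p^m(p-1)}$ modulo $p^{m+1}$; this is the standard fact that norms from $\mu_{p^{m+1}}$ of elements congruent to a rational integer are congruent to its $[\,:\,]$-th power modulo $p^{m+1}$. By Euler's theorem the extra factor is then $\equiv1\pmod{p^{m+1}}$, at least when $F(u^n-1)$ is a unit.

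If $F(u^n-1)$ is not a unit, then $N_m\equiv0$ and the extra factor is in the maximal ideal. I would then argue directly that both sides are $\equiv0\pmod{p^{m+1}}$, using that $N_m$ is divisible by $\prod_{k\le m}\mathrm{Norm}(\text{nonunit})$, whose valuation is at least $m+1$ after counting the factors with $k\le m$. Finally I must compare the unit factors $\epsilon_{m+1}$ and $\epsilon_m$. Each $\epsilon$ is itself a norm of Euler-factor polynomials of the same shape, namely products over $\chi$ of $(1-\chi(\mathfrak p)N\mathfrak p^{n-1})$. So the same norm argument gives $\epsilon_{m+1}\equiv\epsilon_m\pmod{p^{m+1}}$. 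Primes above $p$ that split partially in the tower need a small case check.

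The main obstacle I expect is the sharp modulus $p^{m+1}$ in the norm lemma. I need
\[
\mathrm{Norm}_{\Q_p(\mu_{p^{m+1}})/\Q_p}(x)\equiv x_0^{\varphi(p^{m+1})}\pmod{p^{m+1}}
\]
for $x\equiv x_0 \bmod (1-\zeta)$, together with the non-unit case. I would try to prove this first via $\Gal(\Q_p(\mu_{p^{m+1}})/\Q_p(\mu_p))$-orbits and induction on $m$, using $(1+p^k y)^p\equiv1 \bmod p^{k+1}$.
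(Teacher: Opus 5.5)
Your proposal is correct and is essentially the paper's proof. The nonexceptional $\omega^n$-branch of the Deligne--Ribet pseudomeasure gives an integral power series $G$ with $\zeta_{L^{(m)},S_p}(1-n)=\prod_{\zeta^{p^m}=1}G(\zeta-1)$; the new factor at level $m+1$ is a norm from $\mathbb{Q}_p(\mu_{p^{m+1}})$ and is $\equiv 1\pmod{p^{m+1}}$ when $G(0)$ is a unit (the paper simply quotes local class field theory, $N(\mathbb{Z}_p[\zeta_{p^{m+1}}]^\times)=1+p^{m+1}\mathbb{Z}_p$, in place of your planned induction); in the nonunit case both arguments count $m+1$ nonunit factors to get valuation at least $m+1$; and the $p$-Euler factors are compared separately.

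Two small points. First, your general norm congruence $N(x)\equiv x_0^{\varphi(p^{m+1})}\pmod{p^{m+1}}$ is false for nonunits (e.g.\ $N(1-\zeta_{p^2})=p$), so state it only for units, which is all you actually use. Second, your character-wise treatment of the Euler factors, where each new factor is either $1$ or the norm of a unit, is slightly more robust than the paper's prime count: that count asserts that residue degrees above $p$ never change in the tower, although a prime above $p$ can be inert in an early layer (the congruence itself still holds).
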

\noindent We emphasize that the congruence in fact holds for every number field
\(L\).  Indeed, if \(L\) is not totally real, then Lemma~\ref{lem:vanishing-nontotally-real} shows that
both sides of \eqref{thmacongruence} vanish identically. Thus the result vacuously holds when $L$ is not totally real. Our results may also be viewed in the context of base change for
\(p\)-adic zeta functions.  Related ideas appear in Iwasawa's
construction of \(p\)-adic \(L\)-functions and, more generally, in the
norm and transfer relations for \(p\)-adic zeta functions studied by
Ritter--Weiss and Kakde; see
\cite[\S3,6--7]{IwasawaPadicL},
\cite[\S2]{RitterWeissPseudomeasures}, and
\cite[Theorem~94 and Proposition~95]{KakdeTotallyReal}. \par The second part of the paper concerns special values of modular
\(L\)-functions in the cyclotomic tower.  Let \(f\) be a normalized
cuspidal newform of weight \(k>2\), trivial nebentype and level
\(\Gamma_0(N)\), let \(p\nmid N\) be an odd prime, and put $K_n=\Q(\mu_{p^{n+1}})$.
By Artin formalism, \(\mathrm{L}_{K_n}(f,s)\) factors as a product of Dirichlet
twists of \(f\), and the new characters appearing between
\(K_{n-1}\) and \(K_n\) are precisely the primitive characters of
conductor \(p^{n+1}\).  We study these new factors using Mazur--Tate
elements. The study of Mazur--Tate elements has gained considerable momentum, particularly concerning their Iwasawa invariants and their congruence properties; see \cite{MazurTate,PollackWeston}.

\par Let \(\mathcal O\) be the valuation ring of the completion of the
Hecke field of \(f\) at a prime above \(p\), with maximal ideal
\(\mathfrak m\) and residue field \(\kappa\). Let $\mathcal X_n:=\widehat{\Gal(K_n/\Q)}$ and for \(\varepsilon\in\{\pm1\}\), set
\[
        \mathcal X_n^\varepsilon
        :=
        \{\chi\in\mathcal X_n:\chi(-1)=\varepsilon\}.
\]
For \(\chi\in\mathcal X_n\), let \(\chi^\circ\) denote the associated
primitive Dirichlet character. Let \(\tau(\chi)\) denote the Gauss sum
of a primitive Dirichlet character \(\chi\), and set \(\tau(\mathbf1)=1\). Using the
cohomological periods \(\Omega_f^\pm\), define
\[
        \mathrm{L}_{K_n}^{\mathrm{alg}}(f,1)
        :=
        \left(
        \prod_{\chi\in\mathcal X_n}\tau(\chi^\circ)
        \right)
        \frac{\mathrm{L}_{K_n}(f,1)}
        {(\Omega_f^+)^{\#\mathcal X_n^+}
         (\Omega_f^-)^{\#\mathcal X_n^-}}.
\]
With this normalization, these algebraic values are \(p\)-integral.

\par Our main result shows that, in the ordinary non-anomalous case, the
algebraic special values are constant modulo the maximal ideal.

\begin{lthm}
[Theorem \ref{lthm:intro-modular}]\label{thm b}
Assume that \(f\) is ordinary at \(p\),
\(\kappa=\F_p\), and
\(a_p(f)\not\equiv1\pmod{\mathfrak m}\).  Then
\(\mathrm{L}_{K_n}^{\mathrm{alg}}(f,1)\in\mathcal O\) for every \(n\geq0\), and
\[
        \mathrm{L}_{K_n}^{\mathrm{alg}}(f,1)
        \equiv
        \mathrm{L}_{K_{n-1}}^{\mathrm{alg}}(f,1)\pmod{\mathfrak m}
\]
for every \(n\geq1\). If \(\mathcal O=\Z_p\), these are congruences modulo \(p\).
\end{lthm}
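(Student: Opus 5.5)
By Artin formalism, $\mathrm{L}_{K_n}^{\mathrm{alg}}(f,1)$ factors as $\mathrm{L}_{K_{n-1}}^{\mathrm{alg}}(f,1)\cdot P_n$. Here $P_n$ is the product, over the primitive characters $\chi$ of conductor $p^{n+1}$ in $\mathcal X_n\setminus\mathcal X_{n-1}$, of the normalized values
\[
\mathrm{L}^{\mathrm{alg}}(f,\chi,1):=\tau(\chi)\,\frac{\mathrm{L}(f,\chi,1)}{\Omega_f^{\chi(-1)}}.
\]
Integrality of all factors follows from integrality of modular symbols for the cohomological periods, together with the Birch formula expressing $\chi(\theta_n(f))$ in terms of these normalized values. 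The congruence then amounts to showing $P_n\equiv1\pmod{\mathfrak m}$ for $n\ge1$, provided $\mathrm{L}_{K_{n-1}}^{\mathrm{alg}}\not\equiv0$. If $\mathrm{L}_{K_{n-1}}^{\mathrm{alg}}\equiv0$, the claim still holds, since $P_n\in\mathcal O$.

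To evaluate $\chi(\theta_n)$ I will use the Mazur--Tate element $\theta_n\in\mathcal O[\Gal(K_n/\Q)]$. For primitive $\chi$ of conductor $p^{n+1}$, Birch's formula gives $\chi(\theta_n)=\mathrm{L}^{\mathrm{alg}}(f,\chi,1)$ up to a $p$-adic unit such as $\chi(-1)$ or a power of $\chi(a)$; there is no Euler factor, because $\chi(p)=0$. Since $\chi$ takes values in $\mu_{p^{n}(p-1)}$, and $p$-power roots of unity are $\equiv1$ mod $\mathfrak m_{\mathcal O[\mu]}$, we get $\chi(\theta_n)\equiv\omega^i(\theta_n)\pmod{\mathfrak m}$. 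Here $\omega^i$ is the tame part of $\chi$, viewed through the projection $\Gal(K_n/\Q)\to(\Z/p)^\times$. Reducing mod $\mathfrak m$, the image of $\theta_n$ under the tame component is the image of $\theta_n$ in $\kappa[\Delta]$ under the augmentation on the $\Gamma$-part. By the distribution relation
\[
\pi_{n,n-1}(\theta_n)=a_p\theta_{n-1}-\nu_{n-1}\theta_{n-2},
\]
iterated down to $\theta_0$, this image equals $a_p^{n}$ times a fixed element, plus contributions from the corestriction terms, which are multiplied by powers of $p$ (coming from the index of norm maps) and so vanish mod $\mathfrak m$. At level $0$ the $p$-stabilization gives the factor $(1-a_p^{-1}\omega^i(p)\cdots)$. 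The upshot I aim for is
\[
\chi(\theta_n)\equiv a_p^{\,n}\,c_{i}\pmod{\mathfrak m},
\]
where $c_i\in\kappa$ is essentially $\mathrm{L}^{\mathrm{alg}}(f,\omega^i,1)$ times an Euler-type factor, or $(1-a_p^{-1})^2\mathrm{L}^{\mathrm{alg}}(f,1)$ when $i=0$.

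Next I take the product over the $\chi$ with a fixed tame part $\omega^i$. There are $p^{n}-p^{n-1}=p^{n-1}(p-1)$ of them, a number divisible by $p-1$ and, for $n\ge1$, $\equiv0\pmod{p-1}$. Because $\kappa=\F_p$, we have $x^{p-1}=1$ for every $x\in\kappa^\times$, so each such product becomes $(a_p^n c_i)^{p^{n-1}(p-1)}\equiv1$ whenever $c_i\not\equiv0$. Ordinarity makes $a_p$ a unit, which is needed here. The hypothesis $a_p\not\equiv1$ should guarantee that the Euler factor for the trivial tame part is a unit, so that $c_0\ne0$ corresponds exactly to $\mathrm{L}_{\Q}^{\mathrm{alg}}\not\equiv0$. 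The remaining case is $c_i\equiv0$ for some $i$. Then $P_n\equiv0$, and I must show that $\mathrm{L}_{K_{n-1}}^{\mathrm{alg}}\equiv0$ as well. This holds because $\mathrm{L}_{K_{n-1}}^{\mathrm{alg}}$ contains the factors for the $\chi$ of conductor $p^{n}$ and tame part $\omega^i$, whose residues are likewise $a_p^{n-1}c_i=0$. For $n=1$, the factor $\mathrm{L}^{\mathrm{alg}}(f,\omega^i,1)$ itself appears in $\mathrm{L}_{K_0}^{\mathrm{alg}}$, and for $i=0$ we use the unit Euler factor.

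The main obstacle is the precise mod-$\mathfrak m$ computation of $\chi(\theta_n)$. This requires bookkeeping of the distribution relation, the normalization of the Gauss sums against Birch's formula, and the Euler factors at level $0$, where the hypothesis $a_p\not\equiv1$ enters. I will first verify the formula for $n=1$ by direct manipulation of modular symbols $\{\infty,a/p^{2}\}$, and then induct.
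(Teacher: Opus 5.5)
This is essentially the paper's proof: the layer factorization $\mathrm{L}_{K_n}^{\mathrm{alg}}(f,1)=\mathrm{L}_{K_{n-1}}^{\mathrm{alg}}(f,1)\,\mathscr L_n^{\mathrm{alg}}(f)$, the reduction of the new-layer product to $\prod_i A_{n,i}^{\varphi(p^n)}$ modulo $\mathfrak m$, propagation $A_{n,i}\equiv a_p^nA_{0,i}$ via the augmented distribution relation with the conductor-$p^2$ step done by hand, and $x^{\varphi(p^n)}=1$ on $\F_p^\times$. You do the reduction character by character using $\psi(\gamma)\equiv 1$, where the paper uses resultants and $\Phi_{p^n}(X)\equiv(X-1)^{\varphi(p^n)}\bmod p$. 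You also treat the zero case per tame component rather than through $\mathrm{L}_{K_0}^{\mathrm{alg}}(f,1)$; both are harmless variants.

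When you carry out the bottom computation, two details differ from your guesses, though neither affects the unit/non-unit dichotomy you need. First, the trivial-component factor is $1-a_p+p^{k-2}\equiv 1-a_p$, not $(1-a_p^{-1})^2$. Second, at the step from conductor $p$ to $p^2$ the corestriction is over $\Delta$, of index $p-1$ rather than $p$. So the discrepancy $A_{1,0}-a_pA_{0,0}=p^{k-2}(p-1)\mathrm{L}(f,1)/\Omega_f^+$ vanishes modulo $\mathfrak m$ because $k>2$, not because of a $p$-power index.
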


\par We record three applications of Theorem~\ref{lthm a}.  The first two
concern totally real abelian fields, while the third applies to arbitrary
totally real fields.

\par The first application concerns higher even \(K\)-groups.  For totally
real abelian fields, the Birch--Tate--Lichtenbaum formula relates
\(\#K_{2n-2}(\mathcal O_F)\), for even \(n\geq2\), to
\(\zeta_F(1-n)\); see
\cite{Garland1971,Quillen1973,Borel1977,Lichtenbaum1972,Lichtenbaum1973}. Theorem~\ref{thm:K-group-congruence-application} implies
that, for \(1\leq r\leq m+1\),
\[
        p^r\mid\#K_{2n-2}(\mathcal O_{L^{(m+1)}})
        \quad\text{if and only if}\quad
        p^r\mid\#K_{2n-2}(\mathcal O_{L^{(m)}}).
\]
In particular, the \(p\)-primary part is trivial at one level if and
only if it is trivial at every level.  Conversely, if
\(K_{2n-2}(\mathcal O_L)[p^\infty]\neq0\), then
Corollary~\ref{cor:K-growth} shows that
\[
        p^{m+1}\mid
        \#K_{2n-2}(\mathcal O_{L^{(m)}})
\]
for every \(m\geq0\).

\par For totally real abelian $L/\Q$, Theorem~\ref{thm:Bernoulli-unit-criterion} gives a level-independent criterion for when the generalized Bernoulli numbers (see \cite[Chapter~4]{Washington1997}) are $p$-adic units.

\par The third application concerns Euler--Poincar\'e characteristics. Let \(\chi_{\mathrm{EP}}(H)\) denote the Euler--Poincar\'e
characteristic of an arithmetic group \(H\). For
background on arithmetic groups and their basic properties, see
\cite{BorelArithmeticGroups,PlatonovRapinchuk}; for their
Euler--Poincar\'e characteristics, see \cite{Harder1971}. Harder's Gauss--Bonnet formula \cite{Harder1971} gives, for every
totally real field \(F\),
\[
        \chi_{\mathrm{EP}}
        \bigl(\operatorname{Sp}_{2g}(\mathcal O_F)\bigr)
        =
        \prod_{j=1}^{g}\zeta_F(1-2j).
\]If
\(d_{L,p}\nmid 2j\) for \(1\leq j\leq g\), then
Theorem~\ref{thm:Euler-characteristic-congruence} yields
\[
        \chi_{\mathrm{EP}}
        \bigl(\operatorname{Sp}_{2g}(\mathcal O_{L^{(m+1)}})\bigr)
        \equiv
        \chi_{\mathrm{EP}}
        \bigl(\operatorname{Sp}_{2g}(\mathcal O_{L^{(m)}})\bigr)
        \pmod{p^{m+1}}.
\]
Thus the same congruence principle has consequences for higher
\(K\)-groups, generalized Bernoulli numbers, and arithmetic
Euler--Poincar\'e characteristics.

\par Several natural questions remain. In the $\op{GL}_1$-case, it would be
of interest to understand the exceptional Teichm\"uller branches excluded
from Theorem~\ref{thm:totally-real-zeta-congruence}. The obstruction in
these cases is that the relevant interpolation does not admit the same
integral power-series description, so the norm argument used here no
longer applies directly. A second direction is to retain the
equivariant information throughout the construction, rather than passing
immediately to scalar character evaluations and their products.  In a noncommutative setting one expects the scalar
characters to be replaced by finite-dimensional Artin representations
and the relevant \(p\)-adic \(L\)-function to take values in a localized
\(K_1\)-group; see, for example, \cite{SujathaReductions}.
\par The mechanism underlying our arguments should apply much more
generally whenever suitably normalized special values are interpolated by
an integral $p$-adic $L$-function.  In particular, the conjectural
framework of Coates--Perrin-Riou predicts the existence of such
$p$-adic $L$-functions for a broad class of ordinary motives admitting
critical values, with interpolation formulae involving the corresponding
complex $L$-values, periods, and local Euler factors; see
\cite{CoatesPerrinRiou,CoatesMotivesII}, and also the Bourbaki
exposition \cite{CoatesBourbaki}.  Whenever the resulting interpolation
object is represented by an integral Iwasawa measure, the cyclotomic
norm argument developed here suggests corresponding congruences between
special values along cyclotomic towers.  Thus the $\mathrm{GL}_1$ and
$\mathrm{GL}_2$ results of the present paper may be viewed as instances
of a more general phenomenon predicted by the theory of motivic
$p$-adic $L$-functions.

\par The paper is organized as follows.  Section~\ref{s 2} develops the elementary
cyclotomic norm congruence which provides the algebraic mechanism behind
the later arguments.  In Section~\ref{s 3} we recall the
Deligne--Ribet \(p\)-adic zeta function and prove
Theorem~\ref{lthm a}, giving congruences between special zeta values in
cyclotomic towers.  Section~\ref{s 4} records several arithmetic applications,
including consequences for higher \(K\)-groups, generalized Bernoulli
numbers, and Euler--Poincar\'e characteristics.  Finally
in Section~\ref{sec:Mazur-Tate}, we develop the analogous \(\mathrm{GL}_2\)
construction using Mazur--Tate elements and prove Theorem~\ref{thm b}.
\section{Preliminaries}\label{s 2}
\subsection{Zeta values and the functional equation}
\par Let \(K\) be a number field of discriminant \(D_K\) and signature $\bigl(r_1(K),r_2(K)\bigr)$,
where \(r_1(K)\) denotes the number of real embeddings of \(K\) and
\(r_2(K)\) denotes the number of pairs of complex-conjugate embeddings.
Thus
\[
        [K:\mathbb Q]=r_1(K)+2r_2(K).
\]
For \(\operatorname{Re}(s)>1\), the Dedekind zeta function of \(K\) is
defined by
\[
        \zeta_K(s)
        =
        \sum_{\mathfrak a\subseteq\mathcal O_K}
        \frac{1}{\operatorname{N}\mathfrak a^{\,s}}
        =
        \prod_{\mathfrak q}
        \left(
        1-\operatorname{N}\mathfrak q^{-s}
        \right)^{-1},
\]
where the sum runs over the nonzero integral ideals of
\(\mathcal O_K\), and the product runs over the nonzero prime ideals of
\(\mathcal O_K\). Introduce the archimedean gamma factors
\[
        \Gamma_{\mathbb R}(s)
        :=
        \pi^{-s/2}\Gamma\left(\frac{s}{2}\right)\quad
\text{and}\quad
        \Gamma_{\mathbb C}(s)
        :=
        2(2\pi)^{-s}\Gamma(s).
\]
The completed Dedekind zeta function is defined as follows:
\[
        \Lambda_K(s)
        :=
        |D_K|^{s/2}
        \Gamma_{\mathbb R}(s)^{r_1(K)}
        \Gamma_{\mathbb C}(s)^{r_2(K)}
        \zeta_K(s).
\]

\par The function \(\zeta_K(s)\) admits a meromorphic continuation to
the whole complex plane, with a unique simple pole at \(s=1\). The
completed zeta function satisfies the functional equation
\[
        \Lambda_K(s)=\Lambda_K(1-s).
\]
It has simple poles at \(s=0,1\) and $s(s-1)\Lambda_K(s)$ is entire.

\par The poles of the archimedean gamma factors account for the
trivial zeros of \(\zeta_K(s)\). Recall that \(\Gamma(s)\) has a
simple pole at every nonpositive integer and has no zeros. Therefore for a positive integer $j$,
\begin{equation}\label{zeta(K)order}\operatorname{ord}_{s=-j}\zeta_K(s)
        =
        \begin{cases}
        r_2(K), & j \text{ odd},\\[4pt]
        r_1(K)+r_2(K), & j \text{ even}.
        \end{cases}
\end{equation}
These are called the \emph{trivial zeros} of the Dedekind zeta
function.

\begin{lemma}
\label{lem:vanishing-nontotally-real}
Let \(K\) be a number field and let $n$ be a positive even integer. Then
\[
        \operatorname{ord}_{s=1-n}\zeta_K(s)=r_2(K).
\]
In particular, if $K$ is not totally real, then 
\[\zeta_K(1-n)=0.\]
\end{lemma}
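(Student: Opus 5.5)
The plan is to apply the order-of-vanishing formula \eqref{zeta(K)order} with \(j=n-1\). Since \(n\) is a positive even integer, \(j=n-1\) is a positive odd integer, so the formula gives \(\operatorname{ord}_{s=1-n}\zeta_K(s)=r_2(K)\). If \(K\) is not totally real, then \(r_2(K)\geq1\), so \(\zeta_K\) vanishes at \(s=1-n\), which gives the second assertion.

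To make this self-contained, I would justify \eqref{zeta(K)order} at \(s=1-n\) directly from the functional equation \(\Lambda_K(s)=\Lambda_K(1-s)\). Write
\[
\zeta_K(1-n+t)=\frac{\Lambda_K(n-t)}{|D_K|^{(1-n+t)/2}\,\Gamma_{\mathbb R}(1-n+t)^{r_1(K)}\,\Gamma_{\mathbb C}(1-n+t)^{r_2(K)}}
\]
near \(t=0\). Since \(n\geq2\), the point \(s=n\) lies in the region of absolute convergence of \(\zeta_K\). There \(\zeta_K(n)\neq0\) by the Euler product, and the gamma factors at \(s=n\) are finite and nonzero. Hence \(\Lambda_K(n)\) is finite and nonzero.

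The denominator is handled by counting poles:
\begin{itemize}
\item \(\Gamma_{\mathbb R}(1-n+t)=\pi^{-(1-n+t)/2}\Gamma\bigl((1-n+t)/2\bigr)\). At \(t=0\) the argument \((1-n)/2\) is a half-integer, because \(1-n\) is odd. Since \(\Gamma\) has no zeros and its only poles are at nonpositive integers, this factor is finite and nonzero.
\item \(\Gamma_{\mathbb C}(1-n+t)=2(2\pi)^{-(1-n+t)}\Gamma(1-n+t)\). Here \(1-n\) is a nonpositive integer, so this factor has a simple pole at \(t=0\).
\end{itemize}
Thus the denominator has a pole of order exactly \(r_2(K)\), and \(\zeta_K\) has a zero of order exactly \(r_2(K)\) at \(1-n\).

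There is no real obstacle here. The only point needing care is the parity bookkeeping, namely that \((1-n)/2\notin\mathbb Z\) while \(1-n\in\mathbb Z_{\leq0\)}, together with the nonvanishing of \(\Lambda_K(n)\).
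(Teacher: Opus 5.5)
Your proposal is correct and matches the paper, which simply specializes \eqref{zeta(K)order} to $j=n-1$, an odd positive integer. Your extra derivation from the functional equation spells out the reasoning the paper only sketches before stating \eqref{zeta(K)order}: $\Lambda_K(n)$ is finite and nonzero, $\Gamma_{\mathbb R}$ is finite and nonzero at the half-integer argument $(1-n)/2$, and $\Gamma_{\mathbb C}$ has a simple pole at $1-n$.
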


\begin{proof} The result is a special case of \eqref{zeta(K)order}.
\end{proof}

\subsection{Cyclotomic norms of integral power series}\label{sec:norm}

\par The arithmetic of cyclotomic extensions is closely related to the behaviour of
norms of local units. We briefly recall some aspects of the theory which will be used throughout the paper. For general background on cyclotomic
fields, local units, and their norm maps, we refer to
\cite[Chapters~1--2]{CoatesSujatha}; see also
\cite[Chapter~13]{Washington1997}. The passage from norm-compatible systems
to elements of the Iwasawa algebra, and equivalently to \(p\)-adic measures,
is discussed in \cite[Chapter~3]{CoatesSujatha}.
\par Let $p$ be an odd prime number and $v_p$ be the $p$-adic valuation normalized by $v_p(p)=1$. Denote by $R_{p^m}$ the set of all $p^m$-th roots of unity. Denote by
$P_{p^{m+1}}$ the set of primitive $p^{m+1}$-st roots of unity. For
$G(T)\in\mathbb Z_p\llbracket T\rrbracket$, set
\begin{equation}\label{A_m and H_m}
 A_m(G)=\prod_{\zeta\in R_{p^m}}G(\zeta-1)
 \qquad\text{and}\qquad
 H_m(G)=\prod_{\zeta\in P_{p^{m+1}}}G(\zeta-1).
\end{equation}
The Galois group $\op{Gal}\bigl(\Q_p(\mu_{p^{m+1}})/\Q_p\bigr)$
permutes the factors in both products. Hence \(A_m(G)\) and \(H_m(G)\) are Galois invariant and lie in \(\Q_p\). The elementary decomposition
$R_{p^{m+1}}=R_{p^m}\sqcup P_{p^{m+1}}$ gives
\begin{equation}\label{eq:AmHm}
 A_{m+1}(G)=A_m(G)H_m(G).
\end{equation}

\begin{lemma}\label{lem:cyclotomic-norm-unit}
For $G(T)\in\mathbb Z_p\llbracket T\rrbracket$, then the following assertions hold:
\begin{enumerate}
\item if $G(0)\in\mathbb Z_p^\times$, then
\[
 H_m(G)\equiv1\pmod{p^{m+1}\mathbb Z_p};
\]
\item if $G(0)\in p\mathbb Z_p$, then $H_m(G)\in p\mathbb Z_p$ and 
\[v_p(A_m(G))\geq m+1.
\]
\end{enumerate}
\end{lemma}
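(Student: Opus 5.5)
Write $\pi_{m+1}=\zeta_{p^{m+1}}-1$ for a fixed primitive root, and $\Phi_{p^{m+1}}(1+T)=\prod_{\zeta\in P_{p^{m+1}}}(1+T-\zeta)$, which is a distinguished polynomial of degree $\varphi(p^{m+1})=p^m(p-1)$. The plan is to express $H_m(G)$ as a resultant/norm and use the Weierstrass division of $G$ by $\omega_m$-type polynomials, but the cleanest route is: $H_m(G)=N_{\Q_p(\mu_{p^{m+1}})/\Q_p}\bigl(G(\pi_{m+1})\bigr)$, since the Galois group acts simply transitively on $P_{p^{m+1}}$ and $G$ has $\Z_p$-coefficients. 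So everything reduces to norms from the totally ramified extension $F_{m}=\Q_p(\mu_{p^{m+1}})$ of degree $p^m(p-1)$.

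For (2): if $G(0)\in p\Z_p$, then $G(\pi_{m+1})\equiv G(0)\equiv0$ modulo the maximal ideal $(\pi_{m+1})$ of $\cO_{F_m}$, since $G(T)-G(0)\in T\Z_p\llbracket T\rrbracket$ and $\pi_{m+1}$ lies in the maximal ideal (convergence is fine since $|\pi_{m+1}|<1$). Hence the norm has valuation $\geq f=1$, i.e.\ $H_m(G)\in p\Z_p$. Then $A_m(G)=G(0)\prod_{j=0}^{m-1}H_j(G)$ by iterating \eqref{eq:AmHm} from $A_0(G)=G(0)$, and each of the $m+1$ factors lies in $p\Z_p$, giving $v_p(A_m(G))\geq m+1$.

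For (1): the main step. Write $G(\pi_{m+1})=G(0)+\pi_{m+1}u$ with $u\in\cO_{F_m}$, so $G(\pi_{m+1})=G(0)(1+\pi_{m+1}u')$ with $u'=u/G(0)\in\cO_{F_m}$. Then $H_m(G)=G(0)^{p^m(p-1)}\,N(1+\pi_{m+1}u')$. The first factor: $G(0)^{p-1}\equiv1\pmod p$, and raising to the $p^m$ power gives $\equiv1\pmod{p^{m+1}}$. The second factor is a norm of a principal unit $x\in U^{(1)}_{F_m}$; I would show $N_{F_m/\Q_p}(U^{(1)})\subseteq 1+p^{m+1}\Z_p$. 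The cleanest way is local class field theory: $F_m/\Q_p$ is abelian with norm group $\langle p\rangle\times(1+p^{m+1}\Z_p)$, so norms of units lie in $1+p^{m+1}\Z_p$; moreover $N(x)$ for $x\in U^{(1)}$ lies in $1+p\Z_p$ trivially, and the unit norm group is exactly $1+p^{m+1}\Z_p$. If one prefers to avoid class field theory, I would instead factor the norm through the tower: $N_{F_m/F_0}$ then $N_{F_0/\Q_p}$, and use the standard estimate that for the degree-$p$ step $F_{j}/F_{j-1}$ the norm maps $U^{(1)}_{F_j}$ into $U^{(\cdot)}$ raising levels suitably (Serre, Local Fields, V.\S3), tracking that $N_{F_0/\Q_p}(U^{(1)}_{F_0})\subseteq 1+p\Z_p$ and each $p$-step multiplies the "distance from 1" by $p$ on $\Q_p$-level. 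The main obstacle is exactly this norm estimate for principal units; I expect to invoke local class field theory (the reciprocity description of the norm group of $\Q_p(\mu_{p^{m+1}})$) as the decisive input, with the constant $G(0)^{p^m(p-1)}$ handled by the elementary congruence above. Combining the two factors gives $H_m(G)\equiv1\pmod{p^{m+1}}$.
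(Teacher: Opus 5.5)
Your proposal is correct and follows the paper's proof. For (1), you identify $H_m(G)$ with $N_{\Q_p(\mu_{p^{m+1}})/\Q_p}\bigl(G(\zeta-1)\bigr)$ and invoke local class field theory to get that unit norms lie in $1+p^{m+1}\Z_p$. For (2), you use $G(\zeta-1)\equiv G(0)$ modulo $\zeta-1$ together with the iterated factorization $A_m(G)=G(0)\prod_{j=0}^{m-1}H_j(G)$, exactly as the paper does. Splitting off the factor $G(0)^{p^m(p-1)}$ is harmless but unnecessary, since the class-field-theoretic description of $N(\cO_{F_m}^\times)$ already covers every unit.
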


\begin{proof}
Suppose first that $G(0)$ is a unit and fix
$\zeta\in P_{p^{m+1}}$.  Then $G(\zeta-1)$ is a unit in
$\mathbb Z_p[\zeta]$.  The product of its Galois conjugates is precisely
$H_m(G)$.  The extension
$\mathbb Q_p(\mu_{p^{m+1}})/\mathbb Q_p$ is the local cyclotomic
extension of conductor $p^{m+1}$, and local Class field theory identifies the norm
of its unit group with the subgroup whose principal-unit component lies in
$1+p^{m+1}\mathbb Z_p$ from which we obtain the asserted congruence.

Now suppose that $G(0)\in p\mathbb Z_p$.  For every $p$-power root of
unity $\zeta$ we have
$G(\zeta-1)\equiv G(0)\pmod{\zeta-1}$, so each such value is a nonunit.
Consequently $H_m(G)$ has positive $p$-adic valuation.  Iterating
\eqref{eq:AmHm} gives
\[
 A_m(G)=G(0)\prod_{j=0}^{m-1}H_j(G),
\]
and every factor on the right is divisible by $p$.
\end{proof}

\begin{theorem}\label{thm:abstract-norm}
For every $G(T)\in\mathbb Z_p\llbracket T\rrbracket$ and every $m\geq0$,
\[
 A_{m+1}(G)\equiv A_m(G)
 \pmod{p^{m+1}\mathbb Z_p}.
\]
\end{theorem}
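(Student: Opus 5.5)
The congruence is a formal consequence of \eqref{eq:AmHm} and Lemma~\ref{lem:cyclotomic-norm-unit}. By \eqref{eq:AmHm},
\[
 A_{m+1}(G)-A_m(G)=A_m(G)\bigl(H_m(G)-1\bigr),
\]
so it suffices to show that this product has $p$-adic valuation at least $m+1$. Both factors lie in $\mathbb Z_p$: each value $G(\zeta-1)$ is integral in $\mathbb Z_p[\zeta]$ because $\zeta-1$ lies in the maximal ideal, so the power series converges there. The products $A_m(G)$ and $H_m(G)$ are Galois invariant, hence lie in $\mathbb Q_p\cap\mathbb Z_p[\zeta]=\mathbb Z_p$. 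The argument then splits according to whether $G(0)$ is a unit.

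\textbf{Case $G(0)\in\mathbb Z_p^\times$.} Part~(1) of Lemma~\ref{lem:cyclotomic-norm-unit} gives $H_m(G)-1\in p^{m+1}\mathbb Z_p$. Since $A_m(G)\in\mathbb Z_p$, the product lies in $p^{m+1}\mathbb Z_p$.

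\textbf{Case $G(0)\in p\mathbb Z_p$.} Part~(2) of Lemma~\ref{lem:cyclotomic-norm-unit} gives $v_p(A_m(G))\ge m+1$, and $H_m(G)-1\in\mathbb Z_p$. Again the product lies in $p^{m+1}\mathbb Z_p$. One could equally bound $A_{m+1}(G)$ and $A_m(G)$ separately, since both have valuation at least $m+1$.

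Given the lemma, there is essentially no obstacle; the substantive input is part~(1), the local class field theory statement that norms of units from $\mathbb Q_p(\mu_{p^{m+1}})$ lie in $\mu_{p-1}\times(1+p^{m+1}\mathbb Z_p)$. One point needs checking there: a norm of a unit could a priori have a nontrivial Teichm\"uller component. For $H_m(G)$ this component is trivial. Indeed, $G(\zeta-1)\equiv G(0)$ modulo the maximal ideal, and $\#P_{p^{m+1}}=p^m(p-1)$, so $H_m(G)\equiv G(0)^{p^m(p-1)}\equiv1\pmod p$. Hence $H_m(G)$ is a principal unit, and the lemma's congruence applies exactly as stated.
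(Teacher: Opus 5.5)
Your proof is correct and follows the paper's argument exactly: you write $A_{m+1}(G)-A_m(G)=A_m(G)\bigl(H_m(G)-1\bigr)$ via \eqref{eq:AmHm}, split according to whether $G(0)$ is a unit, and invoke the two parts of Lemma~\ref{lem:cyclotomic-norm-unit}. Your Teichm\"uller check is correct but not needed: the norm group of $\mathbb Q_p(\mu_{p^{m+1}})$ is $p^{\mathbb Z}\times(1+p^{m+1}\mathbb Z_p)$, so norms of units already lie in $1+p^{m+1}\mathbb Z_p$.
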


\begin{proof}
If $G(0)$ is a unit, combine \eqref{eq:AmHm} with the first part of
Lemma~\ref{lem:cyclotomic-norm-unit}.  If $G(0)$ is a nonunit, the second
part gives $v_p(A_m(G))\geq m+1$ and $H_m(G)\in p\mathbb Z_p$; hence
\[
 A_{m+1}(G)-A_m(G)=A_m(G)(H_m(G)-1)
\]
has valuation at least $m+1$.
\end{proof}

The coefficient ring of a modular $p$-adic $L$-function is usually larger
than $\mathbb Z_p$.  A direct assertion that $H_m(G)\equiv1$ modulo
$p^{m+1}$ is then false in general: the reduction of the constant term can
lie in a residue field larger than $\mathbb F_p$.  The correct scalar
statement is obtained by taking the coefficient-field norm.

\begin{definition}\label{def:coefficient-field-norm}
Let $E/\mathbb Q_p$ be a finite extension with ring of integers
$\mathcal O_E$, and let
\[
        G(T)\in\mathcal O_E\llbracket T\rrbracket.
\]
We define the \emph{coefficient-field norm} of $G$ by
\[
        \mathcal N_{E/\mathbb Q_p}G(T)
        :=
        \prod_{\sigma:E\hookrightarrow\overline{\mathbb Q}_p}
        \sigma(G(T)),
\]
where $\sigma(G(T))$ denotes the power series obtained by applying
$\sigma$ to every coefficient of $G(T)$.  Since the product is invariant
under $\operatorname{Gal}(\overline{\mathbb Q}_p/\mathbb Q_p)$ and has
integral coefficients, one has
\[
        \mathcal N_{E/\mathbb Q_p}G(T)
        \in\mathbb Z_p\llbracket T\rrbracket.
\]

For $m\geq0$, we also put
\[
        A_{m,E}(G)
        :=
        \prod_{\zeta^{p^m}=1}G(\zeta-1).
\]
This product belongs to $\mathcal O_E$: indeed, it is a symmetric
expression in the $p^m$-th roots of unity with coefficients in
$\mathcal O_E$.
\end{definition}

\begin{proposition}\label{prop:coefficient-norm}
Let $E/\mathbb Q_p$ be a finite extension, let
$G(T)\in\mathcal O_E\llbracket T\rrbracket$, and let
$\mathcal N_{E/\mathbb Q_p}G$ and $A_{m,E}(G)$ be as in
Definition~\ref{def:coefficient-field-norm}.  Then, for every $m\geq0$,
\begin{equation}\label{eq:coefficient-norm-Am}
        N_{E/\mathbb Q_p}\bigl(A_{m,E}(G)\bigr)
        =
        A_m\bigl(\mathcal N_{E/\mathbb Q_p}G\bigr).
\end{equation}
Consequently,
\begin{equation}\label{eq:coefficient-norm-congruence}
\begin{split}
        N_{E/\mathbb Q_p}\bigl(A_{m+1,E}(G)\bigr)
        &\equiv
        N_{E/\mathbb Q_p}\bigl(A_{m,E}(G)\bigr)\\
        &\pmod{p^{m+1}\mathbb Z_p}.
\end{split}
\end{equation}
\end{proposition}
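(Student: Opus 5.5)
The plan is to prove the identity \eqref{eq:coefficient-norm-Am} by evaluating both sides as one double product and exchanging the order of multiplication. The congruence \eqref{eq:coefficient-norm-congruence} will then follow at once from Theorem~\ref{thm:abstract-norm}, applied to the power series $\mathcal N_{E/\mathbb Q_p}G\in\mathbb Z_p\llbracket T\rrbracket$.

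We begin by fixing a finite Galois extension $M/\mathbb Q_p$ containing $E$ (its Galois closure) together with $\mu_{p^m}$. Every embedding $\sigma:E\hookrightarrow\overline{\mathbb Q}_p$ lands in $M$ and extends to an automorphism $\tilde\sigma$ of $M$. Since $A_{m,E}(G)\in E$, the field norm is
\[
        N_{E/\mathbb Q_p}\bigl(A_{m,E}(G)\bigr)
        =\prod_{\sigma}\sigma\bigl(A_{m,E}(G)\bigr)
        =\prod_{\sigma}\tilde\sigma\Bigl(\prod_{\zeta^{p^m}=1}G(\zeta-1)\Bigr).
\]
Because $G$ is a power series and $\zeta-1$ is topologically nilpotent, the evaluation $G(\zeta-1)$ is a convergent sum. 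Continuity of $\tilde\sigma$ therefore gives
\[
        \tilde\sigma\bigl(G(\zeta-1)\bigr)=(\sigma G)\bigl(\tilde\sigma(\zeta)-1\bigr),
\]
where $\sigma G$ is $G$ with $\sigma$ applied to its coefficients.

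The key point is that $\tilde\sigma$ permutes the $p^m$-th roots of unity. Hence
\[
        \tilde\sigma\bigl(A_{m,E}(G)\bigr)=\prod_{\zeta^{p^m}=1}(\sigma G)(\zeta-1).
\]
Taking the product over $\sigma$ and exchanging the two finite products gives
\[
        \prod_{\zeta}\prod_{\sigma}(\sigma G)(\zeta-1)=\prod_\zeta\bigl(\mathcal N_{E/\mathbb Q_p}G\bigr)(\zeta-1)=A_m\bigl(\mathcal N_{E/\mathbb Q_p}G\bigr).
\]
The step "evaluation at $\zeta-1$ commutes with products of power series" holds because evaluation $\mathcal O_M\llbracket T\rrbracket\to\mathcal O_M$ at a topologically nilpotent element is a continuous ring homomorphism. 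This proves \eqref{eq:coefficient-norm-Am}.

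For the congruence, note that $\mathcal N_{E/\mathbb Q_p}G\in\mathbb Z_p\llbracket T\rrbracket$, as observed in Definition~\ref{def:coefficient-field-norm}. Theorem~\ref{thm:abstract-norm} then gives $A_{m+1}(\mathcal N G)\equiv A_m(\mathcal N G)\pmod{p^{m+1}}$. Substituting \eqref{eq:coefficient-norm-Am} at levels $m$ and $m+1$ yields \eqref{eq:coefficient-norm-congruence}.

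The only delicate step is the bookkeeping of embeddings versus automorphisms in the identity $\tilde\sigma(A_{m,E}(G))=A_{m,E}(\sigma G)$. It requires the extension $\tilde\sigma$ to be chosen on a field containing both $E$ and $\mu_{p^m}$. The value is independent of that choice, because the product over all $p^m$-th roots of unity is symmetric.
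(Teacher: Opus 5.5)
Your proposal is correct and follows the paper's proof essentially step for step. You extend each embedding $\sigma$ of $E$ to an automorphism (to a finite Galois $M$, where the paper uses $\overline{\mathbb Q}_p$), use that it permutes the $p^m$-th roots of unity to get $\sigma(A_{m,E}(G))=\prod_{\zeta}(\sigma G)(\zeta-1)$, interchange the two finite products, and apply Theorem~\ref{thm:abstract-norm} to $\mathcal N_{E/\mathbb Q_p}G\in\mathbb Z_p\llbracket T\rrbracket$; your remarks on continuity of $\tilde\sigma$ and of evaluation only make explicit what the paper leaves implicit.
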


\begin{proof}
For every $\mathbb Q_p$-embedding
$\sigma:E\hookrightarrow\overline{\mathbb Q}_p$, choose an extension of
$\sigma$ to $\overline{\mathbb Q}_p$.  Since such an extension permutes
the set of $p^m$-th roots of unity, we have
\[
\begin{aligned}
        \sigma\bigl(A_{m,E}(G)\bigr)
        &=
        \sigma\left(
        \prod_{\zeta^{p^m}=1}G(\zeta-1)
        \right)\\
        &=
        \prod_{\zeta^{p^m}=1}
        \sigma(G)(\zeta-1).
\end{aligned}
\]
Taking the product over all embeddings $\sigma$ and interchanging the
two finite products gives
\[
\begin{aligned}
        N_{E/\mathbb Q_p}\bigl(A_{m,E}(G)\bigr)
        &=
        \prod_{\sigma:E\hookrightarrow\overline{\mathbb Q}_p}
        \prod_{\zeta^{p^m}=1}
        \sigma(G)(\zeta-1)\\
        &=
        \prod_{\zeta^{p^m}=1}
        \prod_{\sigma:E\hookrightarrow\overline{\mathbb Q}_p}
        \sigma(G)(\zeta-1)\\
        &=
        \prod_{\zeta^{p^m}=1}
        \bigl(\mathcal N_{E/\mathbb Q_p}G\bigr)(\zeta-1)\\
        &=
        A_m\bigl(\mathcal N_{E/\mathbb Q_p}G\bigr).
\end{aligned}
\]
This proves \eqref{eq:coefficient-norm-Am}.  Since
$\mathcal N_{E/\mathbb Q_p}G(T)\in\mathbb Z_p\llbracket T\rrbracket$,
Theorem~\ref{thm:abstract-norm} gives
\[
        A_{m+1}\bigl(\mathcal N_{E/\mathbb Q_p}G\bigr)
        \equiv
        A_m\bigl(\mathcal N_{E/\mathbb Q_p}G\bigr)
        \pmod{p^{m+1}\mathbb Z_p}.
\]
Using \eqref{eq:coefficient-norm-Am} at levels $m+1$ and $m$ yields
\eqref{eq:coefficient-norm-congruence}.
\end{proof}

\par Let $\mathbb Q_\infty/\mathbb Q$ denote the cyclotomic \(\mathbb Z_p\)-extension of $\Q$, and for \(m\geq0\) let
\(\mathbb Q_m\) be its unique subextension of degree \(p^m\) over
\(\mathbb Q\). We set $\Gamma
        :=
        \operatorname{Gal}(\mathbb Q_\infty/\mathbb Q)$ and fix a topological generator \(\gamma\) of \(\Gamma\).  The Iwasawa algebra
of \(\Gamma\) is the completed group ring
\[
        \Lambda(\Gamma)
        :=
        \mathbb Z_p\llbracket\Gamma\rrbracket
        =
        \varprojlim_m
        \mathbb Z_p[\Gamma/\Gamma^{p^m}],
\]
where the transition maps are induced by the natural quotient maps
\[
        \Gamma/\Gamma^{p^{m+1}}
        \longrightarrow
        \Gamma/\Gamma^{p^m}.
\]
We have an isomorphism
\begin{equation}\label{eq:iwasawa-power-series}
\iota_\gamma:\Lambda(\Gamma)
\xrightarrow{\;\sim\;}
\mathbb Z_p\llbracket T\rrbracket,
\end{equation}
defined by $\gamma\longmapsto 1+T$. For each \(m\geq 0\), set
\[
\nu_m(T)=(1+T)^{p^m}-1.
\]
Under the identification \eqref{eq:iwasawa-power-series}, the natural
projection
\[
\Lambda(\Gamma)\longrightarrow
\mathbb Z_p[\Gamma/\Gamma^{p^m}]
\]
corresponds to reduction modulo \(\nu_m(T)\).  Thus there is a canonical
isomorphism
\[
\mathbb Z_p[\Gamma/\Gamma^{p^m}]
\simeq
\mathbb Z_p\llbracket T\rrbracket/(\nu_m(T)),
\]
and, for \(G\in\mathbb Z_p\llbracket T\rrbracket\), we denote by
\(G_m\) its image modulo \(\nu_m(T)\).  The characters of the cyclic group
\(\Gamma/\Gamma^{p^m}\) are parametrized by the \(p^m\)-th roots of
unity, i.e., a character \(\chi\) is determined by $\chi(\gamma)=\zeta$, where $\zeta^{p^m}=1$.
Evaluation at
\(\chi\) is given by
\[
        \chi(G_m)
        =
        G\bigl(\chi(\gamma)-1\bigr)
        =
        G(\zeta-1).
\]
Consequently,
\[
        A_m(G)
        =
        \prod_{\zeta^{p^m}=1}G(\zeta-1)
        =
        \prod_{\chi\in
        \widehat{\Gamma/\Gamma^{p^m}}}\chi(G_m).
\]
Let $\Delta_m:=\Gamma/\Gamma^{p^m}$ and regard \(G_m\) as an element of \(\mathbb Z_p[\Delta_m]\).
Multiplication by \(G_m\) defines a \(\mathbb Z_p\)-linear endomorphism
\[
        m_{G_m}:
        \mathbb Z_p[\Delta_m]
        \longrightarrow
        \mathbb Z_p[\Delta_m],
        \qquad
        x\longmapsto G_mx.
\]
After extending scalars to a field containing the values of all characters
of \(\Delta_m\), the regular representation decomposes as the direct sum of
its one-dimensional characters.  Hence
\[
\begin{aligned}
        \det(m_{G_m})
        &=
        \prod_{\chi\in\widehat{\Delta_m}}\chi(G_m)\\
        &=
        \prod_{\zeta^{p^m}=1}G(\zeta-1)
        =
        A_m(G).
\end{aligned}
\]
Thus \(A_m(G)\) is precisely the regular determinant of the finite-level
specialization of the Iwasawa-algebra element \(G\).  In particular, the
passage from \(A_m(G)\) to \(A_{m+1}(G)\) amounts to comparing regular
determinants at two successive quotients of the cyclotomic Iwasawa
algebra.  The additional factor is contributed by the characters which
are new at level \(m+1\), namely those of exact order \(p^{m+1}\).
This determinant interpretation will reappear in the final section, where complete character products of Mazur--Tate components are again identified with regular determinants.
\section{Iwasawa theory and interpolation of $L$-values}\label{s 3}

\par We now recall the form of cyclotomic Iwasawa theory that will be used to
interpolate the special values appearing below.  The basic point is that
the Iwasawa algebra of the cyclotomic Galois group may be identified with
the algebra of bounded $p$-adic measures, and, after choosing a
topological generator, with a one-variable power-series ring.  Under these
identifications, evaluation at finite-order characters becomes evaluation
of the corresponding Iwasawa power series at $p$-power roots of unity.
This is the bridge between the $p$-adic $L$-functions considered in this
section and the cyclotomic norm congruences proved in
Section~\ref{sec:norm}.
\par For the classical construction over $\mathbb Q$, see \cite{KubotaLeopoldt,Iwasawa1972,Washington1997}; for totally real fields, see \cite{CassouNogues,Barsky,DeligneRibet,RibetTotallyReal}.  The relation between Iwasawa theory and special values over totally real fields is also discussed systematically in \cite{CoatesSujatha}, while the main conjectural picture is proved in great generality by Wiles \cite{Wiles1990}.  We recall only the parts of this theory which enter the norm argument.
\par Let $L$ be a totally real number field and \(p\) be an odd prime number. We set $L_\infty:=L\mathbb Q_\infty$. This is the cyclotomic \(\mathbb Z_p\)-extension of \(L\). There is a unique integer
\(e_L\geq0\) such that
\begin{equation}
\label{eq:totally-real-eL}
        L\cap\mathbb Q_\infty
        =
        \mathbb Q_{e_L}.
\end{equation}
We define $L^{(m)}
        :=
        L\mathbb Q_{e_L+m}$ for $m\geq0$.
Then $L^{(0)}=L$ and
\[
        \operatorname{Gal}(L^{(m)}/L)
        \simeq
        \operatorname{Gal}
        (\mathbb Q_{e_L+m}/\mathbb Q_{e_L})
        \simeq
        \mathbb Z/p^m\mathbb Z.
\]
\noindent The resulting tower is depicted as follows.

\begin{figure}[ht]
\centering
\begin{tikzpicture}[x=2.5cm,y=1.5cm]
\node (Q0) at (0,0) {$\mathbb Q_{e_L}$};
\node (Q1) at (0,1) {$\mathbb Q_{e_L+1}$};
\node (Qm) at (0,2) {$\mathbb Q_{e_L+m}$};
\node (Qd) at (0,3) {$\vdots$};
\node (Qi) at (0,4) {$\mathbb Q_\infty$};

\node (L0) at (1.8,0) {$L^{(0)}$};
\node (L1) at (1.8,1) {$L^{(1)}$};
\node (Lm) at (1.8,2) {$L^{(m)}$};
\node (Ld) at (1.8,3) {$\vdots$};
\node (Li) at (1.8,4) {$L_\infty=L\mathbb Q_\infty$};

\draw (0,0.2) -- (0,0.8);
\draw (0,1.2) -- (0,1.8);
\draw (0,2.2) -- (0,2.8);
\draw (0,3.2) -- (0,3.8);

\draw (1.8,0.2) -- (1.8,0.8);
\draw (1.8,1.2) -- (1.8,1.8);
\draw (1.8,2.2) -- (1.8,2.8);
\draw (1.8,3.2) -- (1.8,3.8);

\draw (0.45,0) -- (1.35,0);
\draw (0.45,1) -- (1.35,1);
\draw (0.45,2) -- (1.35,2);
\draw (0.45,4) -- (1.35,4);
\end{tikzpicture}
\caption{The cyclotomic tower relative to the totally real field \(L\).
The integer \(e_L\) is characterized by
\(L\cap\mathbb Q_\infty=\mathbb Q_{e_L}\), and
\(L^{(m)}=L\mathbb Q_{e_L+m}\).}
\label{fig:cyclotomic-tower}
\end{figure}

\par Setting $\Gamma_L
        :=
        \operatorname{Gal}(L_\infty/L)$, we have that $\Gamma_L\simeq\mathbb Z_p$. Choose a topological generator $\gamma_L\in\Gamma_L$. For \(m\geq0\), set $\Gamma_{L,m}
        :=
        \Gamma_L/\Gamma_L^{p^m}$. The natural restriction map identifies $\Gamma_{L,m}
        \simeq
        \operatorname{Gal}(L^{(m)}/L)$. We denote its character group by
\[
        \Psi_{L,m}
        :=
        \operatorname{Hom}
        \bigl(
        \Gamma_{L,m},
        \overline{\mathbb Q}_p^{\,\times}
        \bigr).
\]
Since \(\Gamma_{L,m}\) is cyclic of order \(p^m\), evaluation at
\(\gamma_L\) gives a bijection
\[
        \Psi_{L,m}
        \xrightarrow{\sim}
        \mu_{p^m}
\]
defined by $\chi\mapsto\chi(\gamma_L)$.

\par Let \(\mathcal O\) be the valuation ring of a finite extension of
\(\mathbb Q_p\).  The Iwasawa algebra of \(\Gamma_L\) is
\[
        \Lambda_{\mathcal O}(\Gamma_L)
        :=
        \mathcal O\llbracket\Gamma_L\rrbracket
        =
        \varprojlim_m
        \mathcal O[\Gamma_{L,m}].
\]
As in \eqref{eq:iwasawa-power-series}, the choice of \(\gamma_L\) gives an isomorphism
\begin{equation}
\label{eq:totally-real-iwasawa-power-series}
        \iota_{\gamma_L}:
        \mathcal O\llbracket\Gamma_L\rrbracket
        \xrightarrow{\;\sim\;}
        \mathcal O\llbracket T\rrbracket
\end{equation}
defined by $\gamma_L\mapsto1+T$. The Iwasawa algebra may equally well be regarded as an algebra
of bounded measures on \(\Gamma_L\).  If
\[
        \lambda=(\lambda_m)_m
        \in
        \mathcal O\llbracket\Gamma_L\rrbracket
\]
and
\[
        \lambda_m
        =
        \sum_{a\in\Gamma_{L,m}}
        c_{a,m}[a],
\]
then the compatibility under the transition maps defines a bounded
\(\mathcal O\)-valued measure \(\mu_\lambda\) by
\[
        \mu_\lambda
        \bigl(a\Gamma_L^{p^m}\bigr)
        =
        c_{a,m}.
\]
Conversely, every bounded measure determines an element of the Iwasawa
algebra.  Under the identification $\Gamma_L\simeq\mathbb Z_p$ defined by $x\mapsto\gamma_L^x$, the associated power series is the Amice transform
\[
        \mathcal A_\mu(T)
        =
        \int_{\mathbb Z_p}(1+T)^x\,d\mu(x).
\]
Hence, if \(\chi\in\Psi_{L,m}\), then
\[
        \int_{\Gamma_L}\chi\,d\mu
        =
        \mathcal A_\mu
        \bigl(\chi(\gamma_L)-1\bigr).
\]
This is the measure-theoretic origin of the evaluations at roots of
unity which occur in the norm argument.

\par We give a measure-theoretic interpretation of the identity
\eqref{eq:AmHm}.  Let \(\mu\) be a bounded measure on \(\Gamma_L\), and
let
\[
        \pi_m:\Gamma_L\longrightarrow
        \Gamma_{L,m}=\Gamma_L/\Gamma_L^{p^m}
\]
be the natural quotient map.  Its pushforward
\(\mu_m:=(\pi_m)_*\mu\) may be regarded as the group-ring element
\[
        \mu_m
        =
        \sum_{a\in\Gamma_{L,m}}
        \mu\bigl(a\Gamma_L^{p^m}\bigr)[a]
        \in\mathcal O[\Gamma_{L,m}].
\]
For \(\chi\in\Psi_{L,m}\), evaluation at \(\chi\) gives
\[
        \chi(\mu_m)
        =
        \sum_{a\in\Gamma_{L,m}}
        \mu\bigl(a\Gamma_L^{p^m}\bigr)\chi(a)
        =
        \int_{\Gamma_L}\chi\,d\mu.
\]
Now identify \(\Gamma_L\) with \(\Z_p\) by
\(x\mapsto\gamma_L^x\).  If
\(\chi(\gamma_L)=\zeta\), then
\[
        \chi(\gamma_L^x)
        =
        \zeta^x
        =
        \bigl(1+(\zeta-1)\bigr)^x.
\]
Hence, from the definition
\[
        \mathcal A_\mu(T)
        =
        \int_{\Z_p}(1+T)^x\,d\mu(x),
\]
we obtain
\[
        \chi(\mu_m)
        =
        \int_{\Gamma_L}\chi\,d\mu
        =
        \mathcal A_\mu(\zeta-1)
        =
        \mathcal A_\mu\bigl(\chi(\gamma_L)-1\bigr).
\]
Thus evaluation of the finite-level group-ring element at a character
is the same as evaluating the Amice transform at the corresponding
\(p\)-power root of unity minus \(1\).
\par On passing from level \(m\) to level \(m+1\), the character group
admits the disjoint decomposition
\[
        \Psi_{L,m+1}
        =
        \operatorname{Inf}_{\Gamma_{L,m}}^{\Gamma_{L,m+1}}
        (\Psi_{L,m})
        \sqcup
        \Psi_{L,m+1}^{\mathrm{new}},
\]
where
\[
        \Psi_{L,m+1}^{\mathrm{new}}
        :=
        \left\{
        \chi\in\Psi_{L,m+1}:
        \operatorname{ord}(\chi)=p^{m+1}
        \right\}
\]
and \(\operatorname{Inf}_{\Gamma_{L,m}}^{\Gamma_{L,m+1}}(\Psi_{L,m})\)
denotes the set of characters of \(\Gamma_{L,m+1}\) obtained by
composing characters in \(\Psi_{L,m}\) with the natural projection
\(\Gamma_{L,m+1}\twoheadrightarrow\Gamma_{L,m}\).
For a character inflated from \(\Psi_{L,m}\), compatibility of the
pushforward measures gives
\[
        \widehat\mu_{m+1}(\chi)
        =
        \widehat\mu_m(\chi).
\]
Thus the Fourier data at level \(m+1\) consist of the old character
evaluations already present at level \(m\), together with the genuinely
new evaluations at characters of exact order \(p^{m+1}\). If \(G(T)=\mathcal A_\mu(T)\), taking the product of all character
evaluations gives the identity \eqref{eq:AmHm}.

\par Let $\mathcal K_\infty
        :=
        L(\mu_{p^\infty})$ and set $\mathcal G_L
        :=
        \operatorname{Gal}(\mathcal K_\infty/L)$. Since \(L\) is totally real, complex conjugation belongs to
\(\mathcal G_L\).  The cyclotomic character gives an embedding of
\(\mathcal G_L\) into \(\mathbb Z_p^\times\), and, since \(p\) is odd,
there is a decomposition
\[
        \mathcal G_L
        \simeq
        \Delta_L\times\Gamma_L,
\]
where \(\Delta_L\) is the finite subgroup of order prime to \(p\) and
\(\Gamma_L\) is the pro-\(p\) part defined above.  We write $\kappa_L:
        \mathcal G_L
        \longrightarrow
        \mathbb Z_p^\times$ for the cyclotomic character and 
        \begin{equation}\label{omegaL}\omega_L:
        \Delta_L
        \longrightarrow
        \mu_{p-1}\end{equation} for its Teichm\"uller part.  We extend \(\omega_L\) to
\(\mathcal G_L\) by making it trivial on \(\Gamma_L\).
\par Let \(S_p\) denote the set of primes of \(L\) lying above \(p\).  For a
finite-order character $\chi:\mathcal G_L
        \longrightarrow
        \overline{\mathbb Q}_p^{\,\times}$,
let $\mathrm{L}_{L,S_p}(s,\chi)$ denote the corresponding Hecke, \(L\)-function
with the Euler factors at the primes in \(S_p\) removed.

\par We now recall the construction of Deligne--Ribet. We refer especially to
\cite[\S0, Theorems~(0.4) and~(0.5), pp.~231--232]{DeligneRibet}
for the measure-theoretic formulation, and to
\cite[\S8, Main Theorem~(8.2) and Theorem~(8.4),
pp.~278--283]{DeligneRibet}
for the general theorem and its proof by means of Hilbert modular
Eisenstein series. The spaces on which the relevant distributions and
measures are defined are introduced in pp.~238--245 of \emph{loc. cit.}
\par Let $\Lambda_L
        :=
        \mathbb Z_p\llbracket\mathcal G_L\rrbracket$. Since \(\mathcal G_L\) is profinite, an element of \(\Lambda_L\) may be
viewed either as a compatible family of elements in the finite group
rings $\mathbb Z_p[\mathcal G_L/U]$, where \(U\) ranges over the open normal subgroups of \(\mathcal G_L\),
or equivalently as a bounded \(\mathbb Z_p\)-valued measure on
\(\mathcal G_L\). Let \(S_p\) be the set of primes of \(L\) above
\(p\), and let \(M_{L,S_p}\) denote the maximal abelian extension of
\(L\) which is unramified at every finite prime outside \(S_p\). Setting $\mathfrak G_{L,S_p}
        :=
        \operatorname{Gal}(M_{L,S_p}/L)$, the cyclotomic extension
\[
        \mathcal K_\infty=L(\mu_{p^\infty})
\]
is contained in \(M_{L,S_p}\), and restriction therefore gives a
canonical surjection
\[
        \pi_{\mathrm{cyc}}:
        \mathfrak G_{L,S_p}
        \longrightarrow
        \mathcal G_L
        =
        \operatorname{Gal}(\mathcal K_\infty/L).
\]

\par A locally constant function in the Deligne--Ribet construction
means a locally constant function
\[
        \varepsilon:
        \mathfrak G_{L,S_p}
        \longrightarrow V,
\]
where \(V\) is a finite extension of \(\mathbb Q_p\), or more
generally a finite-dimensional \(\mathbb Q_p\)-vector space.  Since
\(\mathfrak G_{L,S_p}\) is profinite, this is equivalent to saying
that there is an open subgroup \(U\subseteq\mathfrak G_{L,S_p}\) and
a function
\[
        \overline{\varepsilon}:
        \mathfrak G_{L,S_p}/U
        \longrightarrow V
\]
such that
\[
        \varepsilon
        =
        \overline{\varepsilon}\circ
        \bigl(
        \mathfrak G_{L,S_p}
        \longrightarrow
        \mathfrak G_{L,S_p}/U
        \bigr).
\]
Thus \(\varepsilon\) factors through a finite strict ray class group.

\par Let \begin{equation}\label{kappadefn}\kappa:
        \mathfrak G_{L,S_p}
        \longrightarrow
        \mathbb Z_p^\times
        \end{equation}
denote the \(p\)-adic cyclotomic character.  For
\(c\in\mathfrak G_{L,S_p}\) and a locally constant function
\(\varepsilon:\mathfrak G_{L,S_p}\to V\), define $\varepsilon_c(g)
        :=
        \varepsilon(cg)$ where $g\in\mathfrak G_{L,S_p}$. Deligne and Ribet consider the modified special value
\[
        \Delta_c(1-k,\varepsilon)
        :=
        \mathrm{L}_{L,S_p}(1-k,\varepsilon)
        -
        \kappa(c)^k
        \mathrm{L}_{L,S_p}(1-k,\varepsilon_c),
        \qquad k\geq1.
\]
See \cite[pp.~230--231]{DeligneRibet} for the definition of \(\mathrm{L}_{L,S_p}(1-k,\varepsilon)\) in terms of partial zeta functions and its specialization to \(S_p\)-depleted Hecke \(L\)-values for finite-order characters.
For fixed \(c\) and \(k\geq1\), let
\[
        \mu_{c,k}(\varepsilon)
        :=
        \Delta_c(1-k,\varepsilon).
\]
A priori this defines only a \(\mathbb Q_p\)-valued distribution.  The
essential integrality assertion is
\cite[Theorem~(0.5), p.~231]{DeligneRibet}: the distribution
\(\mu_{c,k}\) is in fact a \(\mathbb Z_p\)-valued measure, and the
measures for different \(k\) satisfy
\[
        \mu_{c,k}
        =
        \kappa_L^{\,k-1}\mu_{c,1}.
\]
Here multiplication of a measure by \(\kappa_L^{\,k-1}\) means that
\[
        \int_{\mathcal G_L}f\,d
        \bigl(\kappa_L^{\,k-1}\mu_{c,1}\bigr)
        =
        \int_{\mathcal G_L}
        f(g)\kappa_L(g)^{k-1}\,d\mu_{c,1}(g).
\]
The general form of this assertion is Main Theorem~(8.2) of
\cite[p.~278]{DeligneRibet}.  Its proof ultimately rests on the
integrality of suitable linear combinations of constant terms of
Hilbert modular Eisenstein series, encoded in the ``Eisenstein
congruences'' of
\cite[Theorem~(8.4), pp.~278--280]{DeligneRibet}; the proof of
Main Theorem~(8.2) is completed on
\cite[pp.~282--283]{DeligneRibet}.

Following the normalization used by Serre, Deligne--Ribet then put
\[
        \lambda_c
        :=
        \kappa_L^{-1}\mu_{c,1}.
\]
Their Theorem~(0.5) gives
\begin{equation}
\label{eq:DR-lambda-c-interpolation}
        \int_{\mathcal G_L}
        \varepsilon(g)\kappa_L(g)^k
        \,d\lambda_c(g)
        =
        \Delta_c(1-k,\varepsilon)
\end{equation}
for every \(k\geq1\) and every locally constant \(\varepsilon\).
Thus \(\lambda_c\) is already an honest element of
\(\Lambda_L\), but it interpolates the \emph{modified} values
\(\Delta_c(1-k,\varepsilon)\), rather than the unmodified
\(L\)-values.

The passage from these measures to the \(p\)-adic zeta
\emph{pseudomeasure} removes the modifying factor \(1-c\).
To formulate this precisely, let $Q(\Lambda_L)$ denote the total quotient ring of \(\Lambda_L\), obtained by inverting
all non-zero-divisors of \(\Lambda_L\). Choose \(c\in\mathcal G_L\) such that \(1-c\) is a non-zero-divisor in
\(\Lambda_L\).  Deligne--Ribet define
\begin{equation}
\label{eq:DR-pseudomeasure-definition}
        \Theta_{L,S_p}
        :=
        \frac{\lambda_c}{1-c}
        \in
        Q(\Lambda_L).
\end{equation}
This is precisely the construction given immediately after
Theorem~(0.5) on
\cite[p.~232]{DeligneRibet}.

An element $\Theta\in Q(\Lambda_L)$
is called a \emph{pseudomeasure} if
\begin{equation}
\label{eq:def-pseudomeasure}
        (1-g)\Theta\in\Lambda_L
        \qquad
        \text{for every }g\in\mathcal G_L.
\end{equation}
Thus a pseudomeasure is allowed to have a denominator, but only a
denominator of augmentation type. Deligne--Ribet show that
the element \(\Theta_{L,S_p}\) in
\eqref{eq:DR-pseudomeasure-definition} is independent of the auxiliary
choice of \(c\): more precisely, if \(c'\in\mathcal G_L\), then
\begin{equation}
\label{eq:DR-pseudomeasure-relation}
        (1-c')\Theta_{L,S_p}
        =
        \lambda_{c'}
        \in\Lambda_L.
\end{equation}
This is the relation stated on
\cite[p.~232]{DeligneRibet}, and it immediately implies
\eqref{eq:def-pseudomeasure}.

The reason for introducing the pseudomeasure is that division by
\(1-c\) removes the auxiliary modification in
\(\Delta_c(1-k,\varepsilon)\).  Consequently, in the sense of
specialization in the total quotient ring, one has
\begin{equation}
\label{eq:DR-pseudomeasure-interpolation}
        \int_{\mathcal G_L}
        \varepsilon(g)\kappa_L(g)^k
        \,d\Theta_{L,S_p}(g)
        =
        \mathrm{L}_{L,S_p}(1-k,\varepsilon).
\end{equation}
This is the interpolation statement described by Deligne--Ribet
immediately after the definition of their pseudomeasure on
\cite[p.~232]{DeligneRibet}.  Specializing
\(\varepsilon\) to finite-order characters gives the usual
Deligne--Ribet \(p\)-adic \(L\)-functions.

\par We now pass from the Deligne--Ribet pseudomeasure to the integral
Iwasawa function that will be used below.  Fix an even integer
\(n\geq2\), and let
\begin{equation}\label{rhoLndefn}\rho_{L,n}:=\omega_L^n
\end{equation}
be the corresponding character of \(\Delta_L\).  Let
\(\mathcal O_{L,n}\) denote the valuation ring generated over
\(\mathbb Z_p\) by the values of \(\rho_{L,n}\), and put
\[
        \varepsilon_{L,n}
        :=
        \frac{1}{|\Delta_L|}
        \sum_{\delta\in\Delta_L}
        \rho_{L,n}(\delta)^{-1}\delta
        \in
        \mathcal O_{L,n}[\Delta_L].
\]
We impose the nonexceptional condition
\begin{equation}
\label{eq:totally-real-nonexceptional}
        \rho_{L,n}\neq1.
\end{equation}

\begin{lemma}
\label{lem:DR-nonexceptional-integral}
Assume \eqref{eq:totally-real-nonexceptional}.  Then the
\(\rho_{L,n}\)-component of the Deligne--Ribet pseudomeasure is integral:
\begin{equation}
\label{eq:DR-nonexceptional-integral}
        \varepsilon_{L,n}\Theta_{L,S_p}
        \in
        \varepsilon_{L,n}
        \mathcal O_{L,n}
        \llbracket\mathcal G_L\rrbracket
        \simeq
        \mathcal O_{L,n}\llbracket\Gamma_L\rrbracket.
\end{equation}
In particular, after choosing a topological generator of \(\Gamma_L\),
this component may be regarded as a power series in
\(\mathcal O_{L,n}\llbracket T\rrbracket\).
\end{lemma}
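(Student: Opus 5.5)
The plan is to use the relation \eqref{eq:DR-pseudomeasure-relation} with a well-chosen auxiliary element $c'\in\Delta_L$, for which $\rho_{L,n}(1-c')$ is a unit. Since $\varepsilon_{L,n}$ is an idempotent of $\mathcal O_{L,n}[\Delta_L]$ — note $|\Delta_L|$ divides $p-1$ and so is a $p$-adic unit — multiplication by it projects $\mathcal O_{L,n}\llbracket\mathcal G_L\rrbracket=\mathcal O_{L,n}[\Delta_L]\otimes\mathcal O_{L,n}\llbracket\Gamma_L\rrbracket$ onto the $\rho_{L,n}$-isotypic piece. That piece is identified with $\mathcal O_{L,n}\llbracket\Gamma_L\rrbracket$ via $\delta\mapsto\rho_{L,n}(\delta)$ for $\delta\in\Delta_L$. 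This gives the isomorphism in \eqref{eq:DR-nonexceptional-integral} and, after choosing $\gamma_L$, the power-series description via \eqref{eq:totally-real-iwasawa-power-series}.

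First, by \eqref{eq:totally-real-nonexceptional} there is some $\delta_0\in\Delta_L$ with $\rho_{L,n}(\delta_0)\neq1$. Since $\rho_{L,n}$ takes values in $\mu_{p-1}$, and distinct $(p-1)$-st roots of unity remain distinct modulo the maximal ideal, the element $u:=1-\rho_{L,n}(\delta_0)$ is a unit in $\mathcal O_{L,n}$. Second, apply \eqref{eq:DR-pseudomeasure-relation} with $c'=\delta_0$ to get $(1-\delta_0)\Theta_{L,S_p}=\lambda_{\delta_0}\in\Lambda_L$. Third, multiply by $\varepsilon_{L,n}$. Since $\delta\varepsilon_{L,n}=\rho_{L,n}(\delta)\varepsilon_{L,n}$ for $\delta\in\Delta_L$, this gives $u\,\varepsilon_{L,n}\Theta_{L,S_p}=\varepsilon_{L,n}\lambda_{\delta_0}$. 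Dividing by the unit $u$ yields $\varepsilon_{L,n}\Theta_{L,S_p}=u^{-1}\varepsilon_{L,n}\lambda_{\delta_0}\in\varepsilon_{L,n}\mathcal O_{L,n}\llbracket\mathcal G_L\rrbracket$.

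The main obstacle is bookkeeping inside the total quotient ring. The manipulation above happens in $Q(\Lambda_L)\otimes\mathcal O_{L,n}$, and one must check that the identity $\varepsilon_{L,n}(1-\delta_0)=u\,\varepsilon_{L,n}$ legitimately transfers the division. Here $1-\delta_0$ itself is a zero-divisor, since it is killed by the trivial-character idempotent. So I would not divide by $1-\delta_0$. Instead I would work with a non-zero-divisor $c$ defining $\Theta_{L,S_p}$ and compute $\varepsilon_{L,n}\lambda_{\delta_0}(1-c)=\varepsilon_{L,n}(1-\delta_0)\lambda_c=u\,\varepsilon_{L,n}\lambda_c$, using $(1-c)\lambda_{\delta_0}=(1-\delta_0)\lambda_c$, which follows from \eqref{eq:DR-pseudomeasure-relation}. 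Dividing by $1-c$ in $Q$ then gives exactly $u\,\varepsilon_{L,n}\Theta_{L,S_p}=\varepsilon_{L,n}\lambda_{\delta_0}$ as elements of $Q(\Lambda_L)\otimes\mathcal O_{L,n}$, which is the claimed identity.
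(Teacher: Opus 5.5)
Your proof is correct and follows essentially the same route as the paper: choose $\delta\in\Delta_L$ with $\rho_{L,n}(\delta)\neq1$, multiply the pseudomeasure relation $(1-\delta)\Theta_{L,S_p}=\lambda_{\delta}$ by $\varepsilon_{L,n}$, and divide by the unit $1-\rho_{L,n}(\delta)$. Your explicit check that $\delta\varepsilon_{L,n}=\rho_{L,n}(\delta)\varepsilon_{L,n}$, and your remark that no division by the zero-divisor $1-\delta$ is ever needed, make precise the point the paper leaves as ``up to the equivalent inverse convention.''
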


\begin{proof}
Since \(\Theta_{L,S_p}\) is a pseudomeasure, for every
\(g\in\mathcal G_L\) one has
\[
        (g-1)\Theta_{L,S_p}
        \in
        \mathbb Z_p\llbracket\mathcal G_L\rrbracket.
\]
Because \(\rho_{L,n}\neq1\), we may choose
\(\delta\in\Delta_L\) such that
\(\rho_{L,n}(\delta)\neq1\).  Applying \(\varepsilon_{L,n}\) to the preceding
relation with \(g=\delta\) gives
\[
        \bigl(\rho_{L,n}(\delta)-1\bigr)
        \varepsilon_{L,n}\Theta_{L,S_p}
        \in
        \varepsilon_{L,n}
        \mathcal O_{L,n}\llbracket\mathcal G_L\rrbracket,
\]
up to the equivalent inverse convention determined by the definition of
\(\varepsilon_{L,n}\).  The value \(\rho_{L,n}(\delta)\) has order prime to \(p\);
hence
\[
        \rho_{L,n}(\delta)-1
        \in
        \mathcal O_{L,n}^{\times}.
\]
Dividing by this unit proves \eqref{eq:DR-nonexceptional-integral}.

Thus the possible denominator of the Deligne--Ribet pseudomeasure
disappears on every nontrivial \(\Delta_L\)-component.  Only the trivial
component can remain nonintegral; this is the exceptional branch of the
\(p\)-adic zeta function.  Since the weight-\(n\) component is
nontrivial by \eqref{eq:totally-real-nonexceptional}, it is an honest
Iwasawa measure, and this is the component to which we apply the
cyclotomic norm theorem.
\end{proof}

Since \(\rho_{L,n}=\omega_L^n\) takes values in
\(\mathbb Z_p^\times\), there is a continuous
\(\mathbb Z_p\)-algebra homomorphism
\begin{equation}
\label{eq:rho-specialization-map}
        \operatorname{pr}_{\rho_{L,n}}:
        \mathbb Z_p\llbracket\mathcal G_L\rrbracket
        \longrightarrow
        \mathbb Z_p\llbracket\Gamma_L\rrbracket
\end{equation}
characterized on group-like elements by
\[
        \operatorname{pr}_{\rho_{L,n}}(\delta\gamma)
        =
        \rho_{L,n}(\delta)\gamma,
\]
where $\delta\in\Delta_L$ and $\gamma\in\Gamma_L$.
In other words, this map specializes the finite
\(\Delta_L\)-variable at the character \(\rho_{L,n}\) and leaves the
cyclotomic variable unchanged.  Since
\[
        \operatorname{pr}_{\rho_{L,n}}(\varepsilon_{L,n})=1,
\]
its restriction induces an isomorphism
\begin{equation}
\label{eq:rho-component-identification}
        \varepsilon_{L,n}
        \mathbb Z_p\llbracket\mathcal G_L\rrbracket
        \xrightarrow{\;\sim\;}
        \mathbb Z_p\llbracket\Gamma_L\rrbracket,
\end{equation}
defined by $\varepsilon_{L,n}\gamma\longmapsto\gamma$. By Lemma~\ref{lem:DR-nonexceptional-integral}, the element
\(\varepsilon_{L,n}\Theta_{L,S_p}\) lies in the source of
\eqref{eq:rho-component-identification}.  We therefore set
\[
        \Theta_{L,n}
        :=
        \operatorname{pr}_{\rho_{L,n}}
        \bigl(\varepsilon_{L,n}\Theta_{L,S_p}\bigr)
        \in
        \mathbb Z_p\llbracket\Gamma_L\rrbracket.
\]

It remains to incorporate the weight-\(n\) cyclotomic twist. Recall that $\kappa_L$ is given by \eqref{kappadefn} and write
\[
        \kappa_{L,\Gamma}
        :=
        \kappa_L|_{\Gamma_L}:
        \Gamma_L\longrightarrow1+p\mathbb Z_p.
\]
For every integer \(a\), the rule
\[
        \gamma\longmapsto
        \kappa_{L,\Gamma}(\gamma)^a\gamma,
        \qquad
        \gamma\in\Gamma_L,
\]
extends uniquely by continuity to a
\(\mathbb Z_p\)-algebra automorphism
\begin{equation}
\label{eq:cyclotomic-twist-map}
        \operatorname{Tw}_a:
        \mathbb Z_p\llbracket\Gamma_L\rrbracket
        \xrightarrow{\;\sim\;}
        \mathbb Z_p\llbracket\Gamma_L\rrbracket.
\end{equation}
Its inverse is \(\operatorname{Tw}_{-a}\). The twisting automorphism is given explicitly by
\[
        \operatorname{Tw}_a:
        G(T)
        \longmapsto
        G\bigl(u_L^a(1+T)-1\bigr).
\]
Thus twisting amounts simply to translating the cyclotomic variable by
the character \(\kappa_{L,\Gamma}^a\).

With the exponent dictated by the interpolation normalization, we define
\begin{equation}
\label{eq:DR-relative-Iwasawa-element}
        \mathscr G_{L,n}
        :=
        \operatorname{Tw}_n(\Theta_{L,n})
        \in
        \mathbb Z_p\llbracket\Gamma_L\rrbracket.
\end{equation}
Equivalently,
\[
        \mathscr G_{L,n}
        =
        \operatorname{Tw}_n
        \left(
        \operatorname{pr}_{\rho_{L,n}}
        \bigl(\varepsilon_{L,n}\Theta_{L,S_p}\bigr)
        \right).
\]
Hence the construction is the composition
\[
        \varepsilon_{L,n}\Theta_{L,S_p}
        \in
        \varepsilon_{L,n}\mathbb Z_p\llbracket\mathcal G_L\rrbracket
        \xrightarrow{\;\operatorname{pr}_{\rho_{L,n}}\;}
        \mathbb Z_p\llbracket\Gamma_L\rrbracket
        \xrightarrow{\;\operatorname{Tw}_n\;}
        \mathbb Z_p\llbracket\Gamma_L\rrbracket.
\]

Under
\eqref{eq:totally-real-iwasawa-power-series}, write
\begin{equation}\label{GLn}G_{L,n}(T)
        :=
        \iota_{\gamma_L}(\mathscr G_{L,n})
        \in
        \mathbb Z_p\llbracket T\rrbracket.
\end{equation}

The normalization has been chosen so that, for every finite-order
character
\[
        \chi:\Gamma_L
        \longrightarrow
        \overline{\mathbb Q}_p^{\,\times},
\]
one has
\begin{equation}
\label{eq:DR-relative-interpolation}
        G_{L,n}
        \bigl(\chi(\gamma_L)-1\bigr)
        =
        \mathrm{L}_{L,S_p}(1-n,\chi).
\end{equation}
\par We now apply the Artin formalism to the imprimitive zeta function. We have that
\begin{equation}
\label{eq:Artin-totally-real-p-depleted}
        \zeta_{L^{(m)},S_p}(s)
        =
        \prod_{\chi\in\Psi_{L,m}}
        \mathrm{L}_{L,S_p}(s,\chi),
\end{equation}
where
\[
        \zeta_{L^{(m)},S_p}(s)
        :=
        \zeta_{L^{(m)}}(s)
        \prod_{\mathfrak P\mid p}
        \left(
        1-(N\mathfrak P)^{-s}
        \right)
\]
is the Dedekind zeta function with all Euler factors above \(p\) removed.

Recall from \eqref{A_m and H_m} that, for
\(G(T)\in\mathbb Z_p\llbracket T\rrbracket\), we defined
\[
        A_m(G)
        :=
        \prod_{\zeta^{p^m}=1}G(\zeta-1).
\]
Thus \eqref{eq:DR-product-zeta} may be written as
\begin{equation}
\label{eq:DR-product-zeta}
\begin{aligned}
        \zeta_{L^{(m)},S_p}(1-n)
        &=
        \prod_{\chi\in\Psi_{L,m}}
        G_{L,n}\bigl(\chi(\gamma_L)-1\bigr)\\
        &=
        \prod_{\zeta^{p^m}=1}G_{L,n}(\zeta-1)\\
        &=
        A_m(G_{L,n}),
\end{aligned}
\end{equation}
where the second equality uses the fact that
\(\chi(\gamma_L)\), as \(\chi\) ranges over \(\Psi_{L,m}\), runs through
all \(p^m\)-th roots of unity.

Theorem \ref{thm:abstract-norm} therefore gives immediately
\begin{equation}
\label{eq:p-depleted-zeta-congruence}
        \zeta_{L^{(m+1)},S_p}(1-n)
        \equiv
        \zeta_{L^{(m)},S_p}(1-n)
        \pmod{p^{m+1}\mathbb Z_p}.
\end{equation}
For \(m\geq0\), put
\begin{equation}
\label{eq:E-m-n}
        E_{m,n}
        :=
        \prod_{\mathfrak P\mid p\text{ in }L^{(m)}}
        \left(
        1-(N\mathfrak P)^{n-1}
        \right).
\end{equation}
Then
\begin{equation}
\label{eq:p-depletion-full-zeta}
        \zeta_{L^{(m)},S_p}(1-n)
        =
        E_{m,n}\,
        \zeta_{L^{(m)}}(1-n).
\end{equation}
Since \(N\mathfrak P\) is divisible by \(p\) and \(n\geq2\), every factor
in \(E_{m,n}\) belongs to \(1+p\mathbb Z_p\).  In particular, $E_{m,n}\in\mathbb Z_p^\times$. Let \(\mathfrak p\) be a prime of \(L\) above \(p\), and let $D_{\mathfrak p}\subseteq\Gamma_L$ be its decomposition group in \(L_\infty/L\).  The local cyclotomic
extension is totally ramified after its finite intersection with
\(\mathrm{L}_{\mathfrak p}\), and hence \(D_{\mathfrak p}\) is an open subgroup
of \(\Gamma_L\).  Since \(\Gamma_L\simeq\mathbb Z_p\), there is a unique
integer \(d_{\mathfrak p}\geq0\) such that $D_{\mathfrak p}
        =
        \Gamma_L^{p^{d_{\mathfrak p}}}$. It follows that the number of primes of \(L^{(m)}\) above
\(\mathfrak p\) is
\[
        g_{\mathfrak p,m}:=
        [\Gamma_L:
        D_{\mathfrak p}\Gamma_L^{p^m}]=
        p^{\min\{m,d_{\mathfrak p}\}}.
\]
Moreover, the residue degree does not change in the cyclotomic direction,
so each of these primes has absolute norm $N\mathfrak p$.
Consequently from \eqref{eq:E-m-n}, we deduce that
\begin{equation}
\label{eq:Euler-product-relative}
        E_{m,n}
        =
        \prod_{\mathfrak p\mid p}
        \left(
        1-(N\mathfrak p)^{n-1}
        \right)^{
        p^{\min\{m,d_{\mathfrak p}\}}
        }.
\end{equation}

\begin{lemma}
\label{lem:Euler-factor-growing-congruence}
For every \(m\geq0\),
\[
        E_{m+1,n}
        \equiv
        E_{m,n}
        \pmod{p^{m+1}\mathbb Z_p}.
\]
\end{lemma}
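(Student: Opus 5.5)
Write $u_{\mathfrak p}:=1-(N\mathfrak p)^{n-1}$ for each prime $\mathfrak p\mid p$ of $L$. Since $n\ge2$ and $p\mid N\mathfrak p$, we have $u_{\mathfrak p}\in1+p\mathbb Z_p$. By \eqref{eq:Euler-product-relative},
\[
E_{m,n}=\prod_{\mathfrak p\mid p}u_{\mathfrak p}^{\,p^{\min\{m,d_{\mathfrak p}\}}} .
\]
Since $E_{m,n}$ and $E_{m+1,n}$ are products over the same finite set of primes, it suffices to prove the congruence factor by factor. Indeed, every factor is a unit, and $1+p^{m+1}\mathbb Z_p$ is a subgroup of $\mathbb Z_p^\times$; so if each ratio of corresponding factors lies in it, so does the ratio $E_{m+1,n}/E_{m,n}$. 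That is exactly the desired congruence, because $E_{m,n}$ is a unit.

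Fix $\mathfrak p$ and split into two cases according to $d_{\mathfrak p}$.

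If $d_{\mathfrak p}\le m$, then $\min\{m,d_{\mathfrak p}\}=\min\{m+1,d_{\mathfrak p}\}=d_{\mathfrak p}$. The two factors are then literally equal, and there is nothing to prove.

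If $d_{\mathfrak p}\ge m+1$, the exponents are $p^m$ and $p^{m+1}$. The ratio of the factors is
\[
u_{\mathfrak p}^{\,p^{m+1}-p^m}=\bigl(u_{\mathfrak p}^{\,p-1}\bigr)^{p^m}.
\]
Here the standard fact applies: for $x\in 1+p\mathbb Z_p$ with $p$ odd, $x^{p^m}\in1+p^{m+1}\mathbb Z_p$. This follows by induction from the binomial expansion. If $x=1+p^jy$ with $j\ge1$, then
\[
x^p=1+p^{j+1}y+\binom p2p^{2j}y^2+\cdots\in 1+p^{j+1}\mathbb Z_p,
\]
using $p\ge3$ (so that even the $p^{pj}$ term has valuation at least $j+1$). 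Applying this with $x=u_{\mathfrak p}^{p-1}\in1+p\mathbb Z_p$ shows the ratio lies in $1+p^{m+1}\mathbb Z_p$.

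There is no serious obstacle. The only points needing care are the following.

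\begin{enumerate}
\item The reduction from a product congruence to factorwise congruences must be made multiplicatively, using that all quantities are units. An additive congruence of products of nonunits would not behave as cleanly.
\item The exponent formula $p^{\min\{m,d_{\mathfrak p}\}}$ must be taken from \eqref{eq:Euler-product-relative} as given. This includes the fact that residue degrees do not grow in the cyclotomic direction, which the paper has already recorded.
\end{enumerate}

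Alternatively, one could even note that $u_{\mathfrak p}\equiv1\pmod p$ directly gives $u_{\mathfrak p}^{p^m}\in1+p^{m+1}\mathbb Z_p$. Both factors are then $\equiv1\pmod{p^{m+1}}$ whenever the exponent is at least $p^m$. In the first case, however, the exponent $p^{d_{\mathfrak p}}$ may be smaller than $p^m$, so the equality argument above is still needed there.
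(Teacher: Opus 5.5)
Your argument is correct and is essentially the paper's proof: the same prime-by-prime reduction to $E_{m+1,n}/E_{m,n}\in 1+p^{m+1}\mathbb{Z}_p$, the same case split on whether $m\geq d_{\mathfrak p}$, and the same estimate $x^{p^m}\in 1+p^{m+1}\mathbb{Z}_p$ for $x\in 1+p\mathbb{Z}_p$.

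One caveat you inherit from the paper: the assertion behind \eqref{eq:Euler-product-relative} that residue degrees do not grow in $L_\infty/L$ fails in general. The inertia group $\Gamma_L^{p^{s}}$ of $\mathfrak p$ can be strictly smaller than $D_{\mathfrak p}$, for instance if $L_{\mathfrak p}/\mathbb{Q}_p$ is totally ramified of degree $p$ and its compositum with the degree-$p$ layer of the local cyclotomic $\mathbb{Z}_p$-extension is unramified over $L_{\mathfrak p}$. The lemma still holds, however. For $m<s$, the $\mathfrak p$-part of $E_{m,n}$ is $\bigl(1-(N\mathfrak p)^{p^{b}(n-1)}\bigr)^{p^{a}}$, where $p^{a}$ is the number of primes above $\mathfrak p$ and $p^{b}$ is their residue degree over $\mathfrak p$, so $a+b=m$. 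This factor lies in $1+p^{p^{b}+a}\mathbb{Z}_p\subseteq 1+p^{m+1}\mathbb{Z}_p$ because $p^{b}\geq b+1$, and the same bound holds at level $m+1$. For $m\geq s$, the $\mathfrak p$-parts at levels $m$ and $m+1$ coincide.
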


\begin{proof}
Since $E_{m,n}$ is a unit in $\Z_p$, it suffices to show that 
\[
        \frac{E_{m+1,n}}{E_{m,n}}
        \equiv
        1
        \pmod{p^{m+1}\mathbb Z_p}.
\]
We compare the Euler factors occurring in \(E_{m+1,n}\) and
\(E_{m,n}\) prime by prime. Fix a prime \(\mathfrak p\mid p\), and set $a_{\mathfrak p}
        :=
        1-(N\mathfrak p)^{n-1}$. Since \(p\mid N\mathfrak p\), we have
\((N\mathfrak p)^{n-1}\in p\mathbb Z_p\), and therefore $a_{\mathfrak p}\in 1+p\mathbb Z_p$. In particular, \(a_{\mathfrak p}\) is a \(p\)-adic unit. Recall from \eqref{eq:Euler-product-relative} that the exponent of the \(\mathfrak p\)-factor in \(E_{m,n}\)
grows like \(p^m\) until the level \(d_{\mathfrak p}\), after which it
stabilizes. Thus there are two cases. If
\(m\geq d_{\mathfrak p}\), the exponent has already stabilized, so the
\(\mathfrak p\)-factor is the same in \(E_{m+1,n}\) and \(E_{m,n}\). Suppose now that \(m<d_{\mathfrak p}\). The exponent of
\(a_{\mathfrak p}\) changes from \(p^m\) at level \(m\) to
\(p^{m+1}\) at level \(m+1\).  Hence its contribution to the quotient is
\[
        a_{\mathfrak p}^{p^{m+1}-p^m}
        =
        a_{\mathfrak p}^{(p-1)p^m}.
\]Thus we deduce that 
\[
\frac{E_{m+1,n}}{E_{m,n}}
=
\prod_{\substack{\mathfrak p\mid p\\ m<d_{\mathfrak p}}}
a_{\mathfrak p}^{(p-1)p^m}.
\]
\par For any \(a\in1+p\mathbb Z_p\), with \(p\) odd, one has
\begin{equation}\label{eq:p-power-unit-congruence}
        a^{p^m}\equiv1\pmod{p^{m+1}\mathbb Z_p}.
\end{equation}
Indeed, writing \(a=1+pu\) with \(u\in\mathbb Z_p\), the standard
valuation estimate
\[
        v_p\bigl(a^{p^m}-1\bigr)
        \geq v_p(a-1)+m
        \geq m+1
\]
gives \eqref{eq:p-power-unit-congruence}. Applying this to \(a=a_{\mathfrak p}\), we obtain
\[
        a_{\mathfrak p}^{p^m}
        \equiv1\pmod{p^{m+1}\mathbb Z_p},
\]
and consequently
\[
        a_{\mathfrak p}^{(p-1)p^m}
        =
        \bigl(a_{\mathfrak p}^{p^m}\bigr)^{p-1}
        \equiv1\pmod{p^{m+1}\mathbb Z_p}.
\]
This proves that
\[
        E_{m+1,n}
        \equiv
        E_{m,n}
        \pmod{p^{m+1}\mathbb Z_p},
\]
as required.
\end{proof}

Combining the Deligne--Ribet interpolation with the cyclotomic norm
argument now gives the following form of the zeta-value theorem.

\begin{theorem}
\label{thm:totally-real-zeta-congruence}
Let \(L\) be a totally real number field and let \(p\) be an odd prime.
Write $L\cap\mathbb Q_\infty
        =
        \mathbb Q_{e_L}$ and $L^{(m)}
        =
        L\mathbb Q_{e_L+m}$. Let \(n\geq2\) be even and suppose that the corresponding
Teichm\"uller branch is nonexceptional, that is, $\omega_L^n\neq1$.
Then $\zeta_{L^{(m)}}(1-n)
        \in
        \mathbb Z_p$ for every \(m\geq0\), and
\begin{equation}
\label{eq:totally-real-zeta-main-congruence}
        \zeta_{L^{(m+1)}}(1-n)
        \equiv
        \zeta_{L^{(m)}}(1-n)
        \pmod{p^{m+1}\mathbb Z_p}.
\end{equation}
\end{theorem}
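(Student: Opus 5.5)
The proof combines the $p$-depleted congruence \eqref{eq:p-depleted-zeta-congruence} with the Euler-factor congruence of Lemma~\ref{lem:Euler-factor-growing-congruence}, and then divides out the Euler factors.

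\textbf{Integrality.} By Lemma~\ref{lem:DR-nonexceptional-integral}, the nonexceptional hypothesis $\omega_L^n\neq1$ makes the $\rho_{L,n}$-component of the Deligne--Ribet pseudomeasure integral. Hence $G_{L,n}(T)\in\mathbb Z_p\llbracket T\rrbracket$, defined in \eqref{GLn}, is an honest power series. By \eqref{eq:DR-product-zeta} we have $\zeta_{L^{(m)},S_p}(1-n)=A_m(G_{L,n})$. This is a product of the values of $G_{L,n}$ at points $\zeta-1$ of positive valuation, and the product is Galois invariant, so it lies in $\mathbb Z_p$. Each $E_{m,n}$ lies in $1+p\mathbb Z_p$ and is therefore a unit. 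So \eqref{eq:p-depletion-full-zeta} gives
\[
\zeta_{L^{(m)}}(1-n)=E_{m,n}^{-1}A_m(G_{L,n})\in\mathbb Z_p .
\]

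\textbf{The congruence.} Write $Z_m:=\zeta_{L^{(m)}}(1-n)$ and $A_m:=A_m(G_{L,n})$. Then
\[
Z_{m+1}-Z_m=E_{m+1,n}^{-1}(A_{m+1}-A_m)+A_m\bigl(E_{m+1,n}^{-1}-E_{m,n}^{-1}\bigr).
\]
The first term has valuation at least $m+1$ by Theorem~\ref{thm:abstract-norm}, since $E_{m+1,n}$ is a unit. For the second term, Lemma~\ref{lem:Euler-factor-growing-congruence} gives $E_{m+1,n}\equiv E_{m,n}\pmod{p^{m+1}}$. Inverting units preserves this congruence, so $E_{m+1,n}^{-1}\equiv E_{m,n}^{-1}\pmod{p^{m+1}}$. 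Since $A_m\in\mathbb Z_p$, the second term also has valuation at least $m+1$. This gives \eqref{eq:totally-real-zeta-main-congruence}.

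\textbf{Main obstacle.} The only delicate input is the interpolation formula \eqref{eq:DR-relative-interpolation} together with the Artin factorization \eqref{eq:Artin-totally-real-p-depleted}. Together they identify the depleted zeta value with $A_m(G_{L,n})$. Two points need checking:
\begin{enumerate}
\item As $\chi$ runs over $\Psi_{L,m}$, the characters of $\mathcal G_L$ appearing are exactly $\omega_L^{-n}\chi$ paired with $\kappa_L^{n}$. So the product over $\Psi_{L,m}$ really is the $\rho_{L,n}$-branch evaluated at all $p^m$-th roots of unity.
\item The Artin factorization over $L^{(m)}/L$ uses exactly the characters of $\Gal(L^{(m)}/L)\simeq\Gamma_{L,m}$. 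This holds because $L^{(m)}=L\mathbb Q_{e_L+m}$ is the $m$-th layer of $L_\infty/L$.
\end{enumerate}
I would take both facts as established earlier in the section, where they were recorded in \eqref{eq:DR-product-zeta}. The remainder of the argument is the formal manipulation above.
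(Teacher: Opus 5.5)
Your proof is correct and is essentially the paper's argument: $G_{L,n}\in\mathbb Z_p\llbracket T\rrbracket$ (Lemma~\ref{lem:DR-nonexceptional-integral}) together with \eqref{eq:DR-product-zeta} and Theorem~\ref{thm:abstract-norm} gives the depleted congruence, Lemma~\ref{lem:Euler-factor-growing-congruence} handles the Euler factors, and the unit factors $E_{m,n}$ are divided out; your identity $\zeta_{L^{(m)}}(1-n)=E_{m,n}^{-1}A_m(G_{L,n})$ and your two-term splitting of $Z_{m+1}-Z_m$ just spell out the integrality and cancellation that the paper states in one line. One small slip occurs in your side remark (1), which the argument does not use: evaluating the $\omega_L^n$-branch at $\chi$ means integrating $\chi\kappa_L^n$ (whose restriction to $\Delta_L$ is $\omega_L^n$) against $\Theta_{L,S_p}$, which by \eqref{eq:DR-pseudomeasure-interpolation} yields $\mathrm{L}_{L,S_p}(1-n,\chi)$ rather than a value attached to $\omega_L^{-n}\chi$, but since you take \eqref{eq:DR-product-zeta} as given, the proof is unaffected.
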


\begin{proof}
By Lemma~\ref{lem:DR-nonexceptional-integral}, 
\(G_{L,n}(T)\) belongs to
\(\mathbb Z_p\llbracket T\rrbracket\). Recall that \eqref{eq:p-depleted-zeta-congruence} states that
\[
        \zeta_{L^{(m+1)},S_p}(1-n)
        \equiv
        \zeta_{L^{(m)},S_p}(1-n)
        \pmod{p^{m+1}}.
\]
\noindent Using
\eqref{eq:p-depletion-full-zeta}, the preceding congruence becomes
\[
        E_{m+1,n}
        \zeta_{L^{(m+1)}}(1-n)
        \equiv
        E_{m,n}
        \zeta_{L^{(m)}}(1-n)
        \pmod{p^{m+1}}.
\]
By Lemma~\ref{lem:Euler-factor-growing-congruence},
\[
        E_{m+1,n}
        \equiv
        E_{m,n}
        \pmod{p^{m+1}}.
\]
Since the zeta values are \(p\)-integral, $E_{m,n}$ and \(E_{m+1,n}\) are units, we
deduce
\[
        \zeta_{L^{(m+1)}}(1-n)
        \equiv
        \zeta_{L^{(m)}}(1-n)
        \pmod{p^{m+1}},
\]
as desired.
\end{proof}

\section{Applications to higher \(K\)-groups and arithmetic Euler characteristics}\label{s 4}

\label{sec:applications-k-groups-euler-characteristics}

\par Throughout this section, let \(L\) be a finite totally real number
field.  Let \(\mathbb Q_\infty/\mathbb Q\) denote the cyclotomic
\(\mathbb Z_p\)-extension, and let \(e_L\geq0\) be determined by
\[
        L\cap\mathbb Q_\infty=\mathbb Q_{e_L}.
\]
As in Section~\ref{s 3}, for \(m\geq0\) we put
\[
        L^{(m)}
        :=
        L\mathbb Q_{e_L+m}.
\]
Then \(L^{(0)}=L\),
\[
        \operatorname{Gal}(L^{(m)}/L)
        \simeq
        \mathbb Z/p^m\mathbb Z,
\]
and every \(L^{(m)}\) is totally real.  Thus the indexing used throughout
this section does not require any linear-disjointness hypothesis between
\(L\) and \(\mathbb Q_\infty\).

\subsection{The Birch--Tate--Lichtenbaum formula}
\label{subsec:BTL-formula}

\par Let \(F\) be a number field and let \(\mathcal O_F\) denote its
ring of integers. Quillen proved that the groups
\(K_i(\mathcal O_F)\) are finitely generated, and Borel computed their
ranks; see \cite{Quillen1973,Borel1977}. It follows that
\(K_{2j}(\mathcal O_F)\) is finite for every \(j\geq 1\). The
finiteness of \(K_2(\mathcal O_F)\) had previously been established by
Garland \cite{Garland1971}.

\par To state the formula for the orders of these groups, let
\(G_F=\operatorname{Gal}(\overline F/F)\), let \(\mu_\infty\) be the
group of all roots of unity in \(\overline F\), and, for \(j\geq 1\),
put
\begin{equation}
        w_j(F)
        :=
        \#H^0\bigl(G_F,\mathbb Q/\mathbb Z(j)\bigr)
        =
        \#\bigl(\mu_\infty^{\otimes j}\bigr)^{G_F}.
        \label{eq:def-wj}
\end{equation}
The group in \eqref{eq:def-wj} is finite. Note that $w_1(F)$ is the number of roots of unity contained in $F$. 
\par We now recall the Brich--Tate--Lichtenbaum formula below which relates zeta values to $K$-groups of $\cO_F$ for a totally real abelian number field $F$.

\begin{theorem}[Birch--Tate--Lichtenbaum]
\label{thm:BTL-exact}
Let \(F/\mathbb Q\) be a totally real abelian extension, let
\(d=[F:\mathbb Q]\), and let \(n\geq 2\) be even. Then
\begin{equation}
        \#K_{2n-2}(\mathcal O_F)
        =
        \begin{cases}
        (-1)^{d}w_n(F)\zeta_F(1-n),
            & n\equiv 2\pmod 4,\\[4pt]
        2^{-d}w_n(F)\zeta_F(1-n),
            & n\equiv 0\pmod 4.
        \end{cases}
        \label{eq:BTL-exact}
\end{equation}
In particular, for every odd prime \(p\),
\begin{equation}
        v_p\!\left(\#K_{2n-2}(\mathcal O_F)\right)
        =
        v_p\!\left(w_n(F)\zeta_F(1-n)\right).
        \label{eq:BTL-odd-primary}
\end{equation}
\end{theorem}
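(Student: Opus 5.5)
The identity \eqref{eq:BTL-exact} is not something we would reprove from the material of this paper. Our plan is to assemble it from the literature and then derive the odd-primary consequence \eqref{eq:BTL-odd-primary}, which is the only part used later. The argument naturally separates into the odd-primary part and the \(2\)-primary part of \(\#K_{2n-2}(\mathcal O_F)\). We compare both sides one prime \(\ell\) at a time, using étale cohomology as the intermediary.

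\emph{Odd primes.} Fix an odd prime \(\ell\). First we would invoke the Quillen--Lichtenbaum comparison, now a theorem by the norm residue isomorphism of Rost--Voevodsky. It identifies \(K_{2n-2}(\mathcal O_F)\otimes\mathbb Z_\ell\) with the étale cohomology group \(H^2_{\mathrm{et}}(\mathcal O_F[1/\ell],\mathbb Z_\ell(n))\), using that \(n\) is even and \(F\) totally real. Next, the cohomological Lichtenbaum conjecture for totally real abelian \(F\) is a consequence of the Iwasawa main conjecture (Mazur--Wiles over \(\mathbb Q\), Wiles \cite{Wiles1990} in general). It expresses the \(\ell\)-part of \(\zeta_F(1-n)\) as the ratio
\[
\#H^2_{\mathrm{et}}(\mathcal O_F[1/\ell],\mathbb Z_\ell(n))\big/\#H^1_{\mathrm{et}}(\mathcal O_F[1/\ell],\mathbb Z_\ell(n))_{\mathrm{tors}}.
\]
The torsion in \(H^1\) is \(H^0(F,\mathbb Q_\ell/\mathbb Z_\ell(n))\), whose order is the \(\ell\)-part of \(w_n(F)\). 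To pass between the \(p\)-depleted Deligne--Ribet \(p\)-adic \(L\)-function and the complete value, one uses the descent from the main conjecture at \(s=1-n\); the Euler factors at \(\ell\) are units, exactly as in \eqref{eq:p-depletion-full-zeta}. Combining these steps gives the \(\ell\)-part of \eqref{eq:BTL-exact}.

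\emph{The prime \(2\) and the sign.} For \(\ell=2\) the comparison with étale cohomology involves real places, which account for the factor \(2^{-d}\) when \(n\equiv0\pmod4\) and the sign \((-1)^d\) when \(n\equiv2\pmod4\). Here we would cite Rognes--Weibel (building on Voevodsky's Milnor conjecture) and Kolster's work on the \(2\)-part of the Lichtenbaum conjecture for abelian fields. This is the main obstacle, and it is not needed for our purposes.

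\emph{The ``in particular'' statement.} This part is immediate. For odd \(p\), both \((-1)^d\) and \(2^{-d}\) are \(p\)-adic units, so taking \(v_p\) of \eqref{eq:BTL-exact} yields \eqref{eq:BTL-odd-primary}. Only the odd-primary input described above is used. Thus for the applications below we only need Wiles' theorem together with the Quillen--Lichtenbaum comparison at odd primes.
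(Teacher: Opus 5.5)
Your proposal is correct and follows the same route as the paper: the paper gives no argument of its own and simply recalls the formula (its citations to Garland, Quillen, Borel and Lichtenbaum concern finiteness and the conjecture), while you also name the results that actually establish it, namely Wiles's main conjecture with the Rost--Voevodsky form of Quillen--Lichtenbaum for odd $\ell$, and Rognes--Weibel and Kolster for $\ell=2$. Your remark that \eqref{eq:BTL-odd-primary}, the only part used later, needs just the odd-$\ell$ input is right, and since Wiles's theorem holds for every totally real field, that input even gives \eqref{eq:BTL-odd-primary} without the abelian hypothesis.
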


\par The case \(n=2\) is the Birch--Tate conjecture, and the general
formula was conjectured by Lichtenbaum
\cite{Lichtenbaum1972,Lichtenbaum1973}.  The relation between special
zeta values and algebraic $K$-groups was also developed in the work of
Coates and Coates--Sinnott; see \cite{CoatesK2,CoatesSinnott}.  For a
broader discussion of $K$-theory and zeta values, see
\cite{SouleKTheory}.

\subsection{The \(p\)-primary root-of-unity factor}
\label{subsec:p-primary-w-factor}

\par The factor \(w_n(L^{(m)})\) must be understood before the
zeta-value congruences can be translated into statements about
\(K\)-groups. Put
\begin{equation}
        d_{L,p}
        :=
        [L(\mu_p):L].
        \label{eq:def-dLp}
\end{equation}
Since \(L(\mu_p)/L\) is cyclic of degree dividing \(p-1\), the integer
\(d_{L,p}\) is prime to \(p\).  It records the part of the tame
\(p\)-cyclotomic extension which is not already contained in \(L\).

\begin{lemma}
\label{lem:w-factor-equivalences}
Let \(L\) be a totally real number field and let \(n\geq1\). The following
conditions are equivalent:
\begin{enumerate}
\item \(d_{L,p}\mid n\);
\item \(p\mid w_n(L^{(m)})\) for some \(m\geq0\);
\item \(p\mid w_n(L^{(m)})\) for every \(m\geq0\).
\end{enumerate}
Consequently, if \(d_{L,p}\nmid n\), then $v_p\bigl(w_n(L^{(m)})\bigr)=0$ for $m\geq0$.
\end{lemma}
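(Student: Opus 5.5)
The plan is to compute the $p$-part of $w_n(F)$ directly from the Galois action on $p$-power roots of unity, and then to check that $d_{L^{(m)},p}=d_{L,p}$ for every $m$. The $p$-primary part of $(\mu_\infty^{\otimes n})^{G_F}$ is $(\mu_{p^\infty}^{\otimes n})^{G_F}$. Its $p$-torsion is $(\mu_p^{\otimes n})^{G_F}$, on which $G_F$ acts through $\bar\kappa^n$, where $\bar\kappa:G_F\to\mathbb F_p^\times$ is the mod $p$ cyclotomic character. The image of $\bar\kappa$ is cyclic of order $[F(\mu_p):F]=d_{F,p}$. Since a finite $p$-group is nontrivial if and only if its $p$-torsion is nontrivial, we get
\[
p\mid w_n(F)\iff \bar\kappa^n=1 \text{ on } G_F\iff d_{F,p}\mid n .
\]
This criterion holds for any number field $F$ and any $n\geq1$.

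Next we show $d_{L^{(m)},p}=d_{L,p}$. The extension $L^{(m)}/L$ has degree $p^m$, while $L(\mu_p)/L$ has degree $d_{L,p}$, which is prime to $p$. These two extensions are therefore linearly disjoint over $L$. Hence
\[
[L^{(m)}(\mu_p):L^{(m)}]=[L(\mu_p):L^{(m)}\cap L(\mu_p)\cdot L]=[L(\mu_p):L]=d_{L,p}.
\]
Equivalently, the mod $p$ cyclotomic character of $G_{L^{(m)}}$ has the same image as that of $G_L$, because the index $[G_L:G_{L^{(m)}}]=p^m$ is prime to the order of the cyclic image.

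Combining the two steps: for every $m\geq0$ we have $p\mid w_n(L^{(m)})$ if and only if $d_{L,p}\mid n$. Since the left condition does not depend on $m$, this gives (1)$\Leftrightarrow$(2)$\Leftrightarrow$(3) at once, with (3)$\Rightarrow$(2) trivial. The consequence follows: if $d_{L,p}\nmid n$, then $p\nmid w_n(L^{(m)})$, so $v_p(w_n(L^{(m)}))=0$.

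The only point needing care is the identification of the $G_F$-invariants of $\mu_p^{\otimes n}$. The group $G_F$ acts on the one-dimensional $\mathbb F_p$-space $\mu_p^{\otimes n}$ through $\bar\kappa^n$, so the invariants are nonzero exactly when $\bar\kappa^n$ is trivial. Since the image of $\bar\kappa$ is cyclic of order $d_{F,p}$, this happens exactly when $d_{F,p}\mid n$. This is routine, and I expect no real obstacle. Total reality of $L$ is not even needed for the argument.
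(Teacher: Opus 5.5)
Your proposal is correct and follows essentially the same route as the paper. Both arguments detect $p\mid w_n(F)$ through the mod-$p$ cyclotomic character acting on $\mu_p^{\otimes n}$, and both use the coprimality of $p^m$ and $d_{L,p}$ to get linear disjointness of $L^{(m)}$ and $L(\mu_p)$ over $L$, hence $[L^{(m)}(\mu_p):L^{(m)}]=d_{L,p}$. The paper phrases the first step via $E=L\cap\mathbb Q(\mu_p)$ and the faithfulness of the Teichm\"uller character, which is equivalent to your observation that the image of $\bar\kappa$ is cyclic of order $d_{F,p}$.
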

\begin{proof}
Let \(E=L\cap\mathbb Q(\mu_p)\). Restriction identifies
\[
        \operatorname{Gal}(L(\mu_p)/L)
        \simeq
        \operatorname{Gal}(\mathbb Q(\mu_p)/E),
\]
and this is a cyclic subgroup of
\((\mathbb Z/p\mathbb Z)^\times\) of order \(d_{L,p}\). The
Teichm\"uller character is faithful on
\(\operatorname{Gal}(\mathbb Q(\mu_p)/\mathbb Q)\). It follows that
\(\omega^n\) is trivial on
\(\operatorname{Gal}(\mathbb Q(\mu_p)/E)\) if and only if
\(d_{L,p}\mid n\).

\par Since \(L^{(m)}/L\) is a \(p\)-extension and
\([L(\mu_p):L]=d_{L,p}\) is prime to \(p\), one has
\begin{equation}
        L^{(m)}\cap L(\mu_p)=L
        \qquad\text{and}\qquad
        \operatorname{Gal}(L^{(m)}(\mu_p)/L^{(m)})
        \simeq
        \operatorname{Gal}(L(\mu_p)/L).
        \label{eq:linear-disjoint-mup}
\end{equation}
Now \(p\mid w_n(L^{(m)})\) if and only if
\((\mu_p^{\otimes n})^{G_{L^{(m)}}}\neq 0\). The action of
\(G_{L^{(m)}}\) on this one-dimensional \(\mathbb F_p\)-space is given
by the \(n\)-th power of the mod-\(p\) cyclotomic character. In view of
\eqref{eq:linear-disjoint-mup}, this character is trivial if and only
if \(d_{L,p}\mid n\). This proves that the conditions are equivalent.
\end{proof}
\begin{remark}
Let $\omega=\omega_L$ be the Teichm\"uller character as in \eqref{omegaL}. If \(X_L\) denotes the group of
Dirichlet characters associated with \(L\), then
\[
        d_{L,p}\mid n
        \quad\text{if and only if}\quad
        \omega^{-n}\in X_L.
\]
Indeed, writing \(E=L\cap\mathbb Q(\mu_p)\), the character
\(\omega^{-n}\) belongs to \(X_L\) precisely when it is trivial on
\(\operatorname{Gal}(\mathbb Q(\mu_p)/E)\), whose order is
\(d_{L,p}\).
\end{remark}

\subsection{Stability of \(p\)-torsion in higher even \(K\)-groups}
\label{subsec:K-group-stability}
\par Assume that $L$ is an abelian totally real extension of $\Q$. For an even integer \(n\geq 2\), set
\begin{equation}
        \kappa_m(n)
        :=
        \#K_{2n-2}(\mathcal O_{L^{(m)}}).
        \label{eq:def-kappa}
\end{equation}

\begin{theorem}
\label{thm:K-group-congruence-application}
Let \(n\geq 2\) be even and assume that
\(d_{L,p}\nmid n\). Then, for every \(m\geq 0\),
\begin{equation}
        v_p\bigl(\kappa_m(n)\bigr)
        =
        v_p\bigl(\zeta_{L^{(m)}}(1-n)\bigr).
        \label{eq:kappa-zeta-valuation}
\end{equation}
Moreover,
\begin{equation}
\begin{split}
        \min\!\left\{
        v_p\bigl(\kappa_{m+1}(n)\bigr),m+1
        \right\}
        &=
        \min\!\left\{
        v_p\bigl(\kappa_m(n)\bigr),m+1
        \right\}.
\end{split}
        \label{eq:truncated-valuation-stability}
\end{equation}
Equivalently, for every integer \(r\) satisfying
\(1\leq r\leq m+1\), one has
\begin{equation}
        p^r\mid\kappa_{m+1}(n)
        \quad\text{if and only if}\quad
        p^r\mid\kappa_m(n).
        \label{eq:pr-divisibility-stability}
\end{equation}
\end{theorem}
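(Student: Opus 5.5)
The plan is to combine three earlier ingredients: the Birch--Tate--Lichtenbaum formula (Theorem~\ref{thm:BTL-exact}), the vanishing of the root-of-unity factor (Lemma~\ref{lem:w-factor-equivalences}), and the zeta congruence of Theorem~\ref{thm:totally-real-zeta-congruence}.

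First, I will check that every \(L^{(m)}\) is a totally real abelian extension of \(\mathbb Q\). It is the compositum of the abelian totally real fields \(L\) and \(\mathbb Q_{e_L+m}\), so Theorem~\ref{thm:BTL-exact} applies to \(F=L^{(m)}\). Equation \eqref{eq:BTL-odd-primary} then gives
\[
v_p(\kappa_m(n))=v_p\bigl(w_n(L^{(m)})\bigr)+v_p\bigl(\zeta_{L^{(m)}}(1-n)\bigr).
\]
Since \(d_{L,p}\nmid n\), Lemma~\ref{lem:w-factor-equivalences} gives \(v_p(w_n(L^{(m)}))=0\). This proves \eqref{eq:kappa-zeta-valuation}. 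Note that \(\kappa_m(n)\) is a positive integer, so \(\zeta_{L^{(m)}}(1-n)\neq0\) and both valuations are finite.

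Second, I must verify the hypothesis \(\omega_L^n\neq1\) of Theorem~\ref{thm:totally-real-zeta-congruence}. The character \(\omega_L\) is the Teichm\"uller part of the cyclotomic character of \(\mathcal G_L\), so it is the mod-\(p\) cyclotomic character of \(G_L\) (lifted to \(\mu_{p-1}\)). Its image has order exactly \([L(\mu_p):L]=d_{L,p}\). Hence \(\omega_L^n=1\) if and only if \(d_{L,p}\mid n\), the same identification used in the proof of Lemma~\ref{lem:w-factor-equivalences}. Under our hypothesis the branch is nonexceptional. Theorem~\ref{thm:totally-real-zeta-congruence} therefore gives \(z_m:=\zeta_{L^{(m)}}(1-n)\in\mathbb Z_p\) and \(z_{m+1}\equiv z_m\pmod{p^{m+1}}\).

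Third comes the elementary step. If \(x,y\in\mathbb Z_p\) satisfy \(x\equiv y\pmod{p^{m+1}}\), then
\[
\min\{v_p(x),m+1\}=\min\{v_p(y),m+1\}.
\]
To see this, suppose \(v_p(x)<m+1\); then \(v_p(y)=v_p(x)\) by the ultrametric inequality. Otherwise both valuations are at least \(m+1\). Applying this to \(x=z_{m+1}\), \(y=z_m\) and using \eqref{eq:kappa-zeta-valuation} at levels \(m\) and \(m+1\) yields \eqref{eq:truncated-valuation-stability}. The divisibility reformulation \eqref{eq:pr-divisibility-stability} follows immediately, since for \(r\leq m+1\) one has \(p^r\mid\kappa\) if and only if \(\min\{v_p(\kappa),m+1\}\geq r\).

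The only step needing care is the second: matching the hypothesis \(d_{L,p}\nmid n\) with the condition \(\omega_L^n\neq1\) of Theorem~\ref{thm:totally-real-zeta-congruence}. I will argue it via the order of the mod-\(p\) cyclotomic character on \(G_L\), as above. Everything else is bookkeeping.
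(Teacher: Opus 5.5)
Your proof is correct and follows the paper's argument. You use the odd-primary Birch--Tate--Lichtenbaum formula together with Lemma~\ref{lem:w-factor-equivalences} to get \eqref{eq:kappa-zeta-valuation}, then Theorem~\ref{thm:totally-real-zeta-congruence} and the truncated-valuation observation $\min\{v_p(x),m+1\}=\min\{v_p(y),m+1\}$. Your explicit checks that $L^{(m)}$ is abelian over $\mathbb Q$ and that $d_{L,p}\nmid n$ is equivalent to $\omega_L^n\neq1$ fill in steps the paper leaves implicit.
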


\begin{proof}
It follows from our hypothesis $d_{L,p}\nmid n$ that \(w_n(L^{(m)})\) is a \(p\)-adic unit for every \(m\).
Equation \eqref{eq:kappa-zeta-valuation} therefore follows from the
odd-primary part \eqref{eq:BTL-odd-primary} of the
Birch--Tate--Lichtenbaum formula.

\par By Theorem \ref{thm:totally-real-zeta-congruence}, the two
\(p\)-adic integers \(\zeta_{L^{(m+1)}}(1-n)\) and
\(\zeta_{L^{(m)}}(1-n)\) are congruent modulo \(p^{m+1}\). If
\(x,y\in\mathbb Z_p\) satisfy \(x\equiv y\pmod{p^a}\), then
\(\min\{v_p(x),a\}=\min\{v_p(y),a\}\). Applying this observation with
\(a=m+1\), and then using \eqref{eq:kappa-zeta-valuation}, gives
\eqref{eq:truncated-valuation-stability}.
The last assertion \eqref{eq:pr-divisibility-stability} is an immediate
reformulation.
\end{proof}

\begin{corollary}
\label{cor:K-group-p-torsion-dichotomy}
Under the hypotheses of Theorem
\ref{thm:K-group-congruence-application}, the following are
equivalent:
\begin{enumerate}
\item \(K_{2n-2}(\mathcal O_L)[p^\infty]=0\);
\item \(\zeta_L(1-n)\in\mathbb Z_p^\times\);
\item \(K_{2n-2}(\mathcal O_{L^{(m)}})[p^\infty]=0\) for some
\(m\geq 0\);
\item \(K_{2n-2}(\mathcal O_{L^{(m)}})[p^\infty]=0\) for every
\(m\geq 0\).
\end{enumerate}
If these equivalent conditions fail, then
\(K_{2n-2}(\mathcal O_{L^{(m)}})[p^\infty]\neq 0\) for every
\(m\geq 0\).
\end{corollary}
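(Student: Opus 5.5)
The plan is to reduce all four conditions to statements about $p$-adic valuations of zeta values, using Theorem~\ref{thm:K-group-congruence-application}. By \eqref{eq:kappa-zeta-valuation}, for every $m\geq0$ we have $v_p(\kappa_m(n))=v_p(\zeta_{L^{(m)}}(1-n))$. Since $K_{2n-2}(\mathcal O_{L^{(m)}})$ is finite, its $p$-primary part vanishes exactly when $v_p(\kappa_m(n))=0$. Hence condition (3), resp.\ (4), is equivalent to $\zeta_{L^{(m)}}(1-n)\in\mathbb Z_p^\times$ for some, resp.\ every, $m$. Taking $m=0$ gives (1)$\Leftrightarrow$(2), since $L^{(0)}=L$; here Theorem~\ref{thm:totally-real-zeta-congruence} ensures the zeta values lie in $\mathbb Z_p$. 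This application is legitimate because $d_{L,p}\nmid n$ forces $\omega_L^n\neq1$, as in Lemma~\ref{lem:w-factor-equivalences} and the remark following it.

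The key step is to show that being a unit is independent of the level. Apply \eqref{eq:truncated-valuation-stability} with the truncation at $r=1\leq m+1$, i.e.\ \eqref{eq:pr-divisibility-stability} with $r=1$. This gives $p\mid\kappa_{m+1}(n)$ if and only if $p\mid\kappa_m(n)$, for every $m\geq0$. By induction on $m$, the property $p\nmid\kappa_m(n)$ is then either true for all $m$ or false for all $m$. This yields (3)$\Rightarrow$(4): if it holds at some level $m_0$, walking up and down the tower one step at a time propagates it to every level. The implication (4)$\Rightarrow$(1) is the case $m=0$, and (1)$\Rightarrow$(3) is trivial. Together with (1)$\Leftrightarrow$(2), this closes the cycle of equivalences.

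For the final assertion, suppose the conditions fail. Then (3) fails, which means that for every $m\geq0$ we have $K_{2n-2}(\mathcal O_{L^{(m)}})[p^\infty]\neq0$; this is exactly the negation of (3).

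The only delicate point is checking that the hypotheses of Theorem~\ref{thm:totally-real-zeta-congruence} hold, namely that $\omega_L^n\neq1$ follows from $d_{L,p}\nmid n$. The character $\omega_L$ is the Teichm\"uller part of the cyclotomic character on $\operatorname{Gal}(L(\mu_{p^\infty})/L)$, and its restriction to the tame part has order $d_{L,p}$. So $\omega_L^n=1$ if and only if $d_{L,p}\mid n$, and this point should go through as well.
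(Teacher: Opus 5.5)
Your proposal is correct and follows essentially the same route as the paper. Both arguments use \eqref{eq:kappa-zeta-valuation} to turn vanishing of the $p$-primary part into $p$-adic unitness of $\zeta_{L^{(m)}}(1-n)$. Both then propagate this through the tower using level-to-level stability of $p$-divisibility; your $r=1$ case of \eqref{eq:pr-divisibility-stability} is exactly the congruence modulo $p$ that the paper invokes. Your explicit check that $d_{L,p}\nmid n$ forces $\omega_L^n\neq1$ is correct, and it only spells out what the paper uses implicitly.
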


\begin{proof}
By \eqref{eq:kappa-zeta-valuation}, the \(p\)-primary subgroup in
question is trivial if and only if the corresponding zeta value is a
\(p\)-adic unit. The congruence
\[
        \zeta_{L^{(m+1)}}(1-n)
        \equiv
        \zeta_{L^{(m)}}(1-n)
        \pmod p
\]
shows that \(p\)-divisibility is unchanged from one layer to the next.
\end{proof}

\begin{corollary}[Quantitative \(K\)-group growth]
\label{cor:K-growth}
Assume the hypotheses of
Theorem~\ref{thm:K-group-congruence-application}. If $K_{2n-2}(\mathcal O_L)[p^\infty]\neq0$,
then, for every \(m\geq0\),
\[
        v_p\!\left(
        \#K_{2n-2}(\mathcal O_{L^{(m)}})
        \right)
        \geq m+1.
\]
\end{corollary}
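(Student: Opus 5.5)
The plan is to reduce everything to Theorem~\ref{thm:K-group-congruence-application}, specifically the truncated valuation identity \eqref{eq:truncated-valuation-stability}, and run an induction on \(m\). The hypothesis \(K_{2n-2}(\mathcal O_L)[p^\infty]\neq0\) means \(p\mid\kappa_0(n)\), so \(v_p(\kappa_0(n))\geq1\). This is exactly the case \(m=0\) of the claim.

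For the inductive step, suppose \(v_p(\kappa_m(n))\geq m+1\). Then \(\min\{v_p(\kappa_m(n)),m+1\}=m+1\), and \eqref{eq:truncated-valuation-stability} gives \(\min\{v_p(\kappa_{m+1}(n)),m+1\}=m+1\), that is, \(v_p(\kappa_{m+1}(n))\geq m+1\). This is one short of the required bound \(m+2\), and closing that gap is the main obstacle.

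To gain the extra power of \(p\), I would return to the power series and use the stronger information in Lemma~\ref{lem:cyclotomic-norm-unit}(2) rather than the congruence alone. By \eqref{eq:kappa-zeta-valuation} together with \eqref{eq:p-depletion-full-zeta} and the fact that \(E_{m,n}\) is a unit, we have
\[
v_p(\kappa_m(n))=v_p\bigl(\zeta_{L^{(m)},S_p}(1-n)\bigr)=v_p\bigl(A_m(G_{L,n})\bigr),
\]
using \eqref{eq:DR-product-zeta}, where \(G_{L,n}\in\mathbb Z_p\llbracket T\rrbracket\) by Lemma~\ref{lem:DR-nonexceptional-integral}. The case \(m=0\) gives \(A_0(G_{L,n})=G_{L,n}(0)\), which is a nonunit. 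Lemma~\ref{lem:cyclotomic-norm-unit}(2) then gives \(v_p(A_m(G_{L,n}))\geq m+1\) for every \(m\geq0\), which is the corollary directly.

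To be sure this bound holds at every level, I will recheck the factorization
\[
A_m(G)=G(0)\prod_{j=0}^{m-1}H_j(G).
\]
Here \(G(0)\in p\mathbb Z_p\). Each \(H_j(G)\) lies in \(\mathbb Z_p\) as a Galois-invariant product of elements of \(\mathbb Z_p[\zeta]\). Each factor \(G(\zeta-1)\) is congruent to \(G(0)\) modulo \(\zeta-1\), so it lies in the maximal ideal, and hence \(H_j(G)\) has positive valuation. Since \(H_j(G)\) lies in \(\mathbb Z_p\), positive valuation forces \(p\mid H_j(G)\). This gives \(m+1\) factors each divisible by \(p\), so \(v_p(A_m(G_{L,n}))\geq m+1\). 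The induction via \eqref{eq:truncated-valuation-stability} is therefore unnecessary; it serves only as a consistency check.
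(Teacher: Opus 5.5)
Your proposal is correct and follows the paper's proof. Both arguments identify $v_p(\#K_{2n-2}(\mathcal O_{L^{(m)}}))$ with $v_p(A_m(G_{L,n}))$, using the Birch--Tate--Lichtenbaum formula with $w_n$ a unit, the unit Euler factor $E_{m,n}$, and \eqref{eq:DR-product-zeta}. Both then combine $G_{L,n}(0)\in p\mathbb Z_p$ with the factorization $A_m(G)=G(0)\prod_{j<m}H_j(G)$ and Lemma~\ref{lem:cyclotomic-norm-unit}(2) to obtain $m+1$ factors, each divisible by $p$. You were also right that the truncated-valuation induction by itself only gives $v_p\geq1$ at every level, so the power-series input is genuinely needed.
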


\begin{proof}
By Theorem~\ref{thm:K-group-congruence-application}, applied at
\(m=0\), we have
\[
        v_p\!\left(\#K_{2n-2}(\mathcal O_L)\right)
        =
        v_p\bigl(\zeta_L(1-n)\bigr),
\]
and consequently,
\begin{equation}
        v_p\bigl(\zeta_L(1-n)\bigr)\geq1.
        \label{eq:zeta-base-nonunit}
\end{equation}

Recall that, for the normalization under consideration, the
Deligne--Ribet power series $G_{L,n}(T)\in\mathbb Z_p\llbracket T\rrbracket$ (as defined in \eqref{GLn}) satisfies
\begin{equation}
        \zeta_{L^{(m)}}(1-n)
        =
        u_m A_m(G_{L,n}),
        \qquad
        u_m\in\mathbb Z_p^\times,
        \label{eq:zeta-Am-unit-factor-proof}
\end{equation}
where
\begin{equation}
        A_m(G_{L,n})
        :=
        \prod_{\zeta^{p^m}=1}
        G_{L,n}(\zeta-1).
        \label{eq:Am-GLn-proof}
\end{equation}
First note that
\[
\begin{aligned}
        \zeta_L(1-n)
        &=
        u_0A_0(G_{L,n})\\
        &=
        u_0
        \prod_{\zeta=1}
        G_{L,n}(\zeta-1)\\
        &=
        u_0G_{L,n}(0).
\end{aligned}
\]
Because \(u_0\in\mathbb Z_p^\times\), we have that
\[
\begin{aligned}
        v_p\bigl(G_{L,n}(0)\bigr)
        &=
        v_p\bigl(\zeta_L(1-n)\bigr)-v_p(u_0)\\
        &=
        v_p\bigl(\zeta_L(1-n)\bigr)\\
        &\geq1.
\end{aligned}
\]
For
\(j\geq0\), let
\[
        H_j(G_{L,n})
        :=
        \prod_{\substack{\zeta^{p^{j+1}}=1\\
                         \zeta^{p^j}\neq1}}
        G_{L,n}(\zeta-1),
\]
the product being taken over the primitive \(p^{j+1}\)-st roots of
unity.  Since
\[
        R_{p^{j+1}}
        =
        R_{p^j}\sqcup P_{p^{j+1}},
\]
we have
\[
        A_{j+1}(G_{L,n})
        =
        A_j(G_{L,n})H_j(G_{L,n}).
\]
Iterating this identity and using
\[
        A_0(G_{L,n})=G_{L,n}(0)
\]
gives
\begin{equation}
        A_m(G_{L,n})
        =
        G_{L,n}(0)
        \prod_{j=0}^{m-1}H_j(G_{L,n}).
        \label{eq:Am-factorization-K-growth}
\end{equation}
The second assertion of
Lemma~\ref{lem:cyclotomic-norm-unit} applies to \(G_{L,n}\), and in particular
\[
        v_p\bigl(H_j(G_{L,n})\bigr)\geq1
\]
for $j\geq 0$. Taking \(p\)-adic valuations in
\eqref{eq:Am-factorization-K-growth}, we therefore obtain
\[
\begin{aligned}
        v_p\bigl(A_m(G_{L,n})\bigr)
        &=
        v_p\bigl(G_{L,n}(0)\bigr)
        +
        \sum_{j=0}^{m-1}
        v_p\bigl(H_j(G_{L,n})\bigr)\\
        &\geq
        1+\sum_{j=0}^{m-1}1\\
        &=
        m+1.
\end{aligned}
\]
Thus for $m\geq 0$, we have that
\begin{equation}
        v_p\bigl(A_m(G_{L,n})\bigr)\geq m+1.
        \label{eq:Am-growth-K}
\end{equation}

Returning to \eqref{eq:zeta-Am-unit-factor-proof}, since
\(u_m\in\mathbb Z_p^\times\), we have \(v_p(u_m)=0\), and hence
\[
\begin{aligned}
        v_p\bigl(\zeta_{L^{(m)}}(1-n)\bigr)
        &=
        v_p\bigl(u_mA_m(G_{L,n})\bigr)\\
        &=
        v_p(u_m)
        +
        v_p\bigl(A_m(G_{L,n})\bigr)\\
        &=
        v_p\bigl(A_m(G_{L,n})\bigr)\\
        &\geq m+1.
\end{aligned}
\]
Finally, the hypotheses of
Theorem~\ref{thm:K-group-congruence-application} imply, by
Lemma~\ref{lem:w-factor-equivalences}, that $d_{L,p}\nmid n$,
and consequently
\[
        v_p\bigl(w_n(L^{(m)})\bigr)=0
        \qquad(m\geq0).
\]
Applying the odd-primary Birch--Tate--Lichtenbaum formula
\eqref{eq:BTL-odd-primary} to the totally real abelian field
\(L^{(m)}\), we obtain
\[
\begin{aligned}
        v_p\!\left(
        \#K_{2n-2}(\mathcal O_{L^{(m)}})
        \right)
        &=
        v_p\!\left(
        w_n(L^{(m)})
        \zeta_{L^{(m)}}(1-n)
        \right)\\
        &=
        v_p\bigl(w_n(L^{(m)})\bigr)
        +
        v_p\bigl(\zeta_{L^{(m)}}(1-n)\bigr)\\
        &=
        v_p\bigl(\zeta_{L^{(m)}}(1-n)\bigr)\\
        &\geq m+1.
\end{aligned}
\]
This proves the result.
\end{proof}
Recall that the Bernoulli numbers $B_n\in\Q$ are defined by
\[
\frac{t}{e^t-1}=\sum_{n\geq 0}B_n\frac{t^n}{n!},
\]
and satisfy $\zeta_{\Q}(1-n)=-B_n/n$ for $n\geq 1$.
\begin{corollary}
\label{cor:K-groups-rational-base}
Let \(L=\mathbb Q\), let \(n\geq 2\) be even, and assume that
\((p-1)\nmid n\). If $p\nmid \frac{B_n}{n}$, then
\begin{equation}
        K_{2n-2}(\mathcal O_{\mathbb Q_m})[p^\infty]=0
        \label{eq:K-vanishing-Qm}
\end{equation}
for $m\geq 0$. Conversely, if \(p\mid B_n/n\), then the group in
\eqref{eq:K-vanishing-Qm} is nonzero for every \(m\geq 0\).
\end{corollary}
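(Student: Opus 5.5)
The plan is to specialize Corollary~\ref{cor:K-group-p-torsion-dichotomy} and Corollary~\ref{cor:K-growth} to \(L=\mathbb Q\). That requires three checks: the hypotheses of Theorem~\ref{thm:K-group-congruence-application} hold, the tower \(L^{(m)}\) is \(\mathbb Q_m\), and \(\zeta_{\mathbb Q}(1-n)=-B_n/n\) turns the condition on \(\zeta_L(1-n)\) into the stated one on \(B_n/n\).

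First, the setup. The field \(\mathbb Q\) is totally real and abelian over \(\mathbb Q\). Since \(\mathbb Q\cap\mathbb Q_\infty=\mathbb Q=\mathbb Q_0\), we have \(e_{\mathbb Q}=0\), and hence \(L^{(m)}=\mathbb Q\,\mathbb Q_m=\mathbb Q_m\). The subsection on \(K\)-groups assumes \(L\) abelian, but the Birch--Tate--Lichtenbaum formula is applied to each layer \(L^{(m)}\). Each \(\mathbb Q_m\) is a subfield of a cyclotomic field, hence totally real abelian, so Theorem~\ref{thm:BTL-exact} applies at every level.

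Next, the numerical hypothesis. We have \(d_{\mathbb Q,p}=[\mathbb Q(\mu_p):\mathbb Q]=p-1\), so the hypothesis \((p-1)\nmid n\) is exactly \(d_{\mathbb Q,p}\nmid n\). Theorem~\ref{thm:totally-real-zeta-congruence} additionally needs \(\omega_{\mathbb Q}^n\neq1\). Here \(\omega_{\mathbb Q}\) is the Teichm\"uller character of \(\Delta_{\mathbb Q}\simeq(\mathbb Z/p\mathbb Z)^\times\), which has order exactly \(p-1\). So \(\omega^n\neq1\) if and only if \((p-1)\nmid n\), and the same hypothesis covers both requirements. I expect this identification to be the only point needing care, since the paper states the \(K\)-group theorem with \(d_{L,p}\) while the zeta congruence is stated with \(\omega_L^n\).

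Finally, the conclusion. Because \(\zeta_{\mathbb Q}(1-n)=-B_n/n\), the condition \(p\nmid B_n/n\) means \(\zeta_{\mathbb Q}(1-n)\in\mathbb Z_p^\times\) (integrality being given by Theorem~\ref{thm:totally-real-zeta-congruence}). Condition (2) of Corollary~\ref{cor:K-group-p-torsion-dichotomy} then yields (4), namely \(K_{2n-2}(\mathcal O_{\mathbb Q_m})[p^\infty]=0\) for all \(m\geq 0\). Conversely, if \(p\mid B_n/n\), then \(\zeta_{\mathbb Q}(1-n)\) is not a unit, so condition (2) fails. The last assertion of that corollary, or alternatively Corollary~\ref{cor:K-growth}, then gives a nonzero \(p\)-primary part at every level, in fact of order at least \(p^{m+1}\).
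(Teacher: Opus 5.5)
Your proposal is correct and matches the paper's proof. You specialize to $L=\mathbb Q$, use $d_{\mathbb Q,p}=p-1$ and $\zeta_{\mathbb Q}(1-n)=-B_n/n$, and read both directions off Corollary~\ref{cor:K-group-p-torsion-dichotomy}. Your extra checks ($e_{\mathbb Q}=0$, abelianness of each $\mathbb Q_m$, and $\omega^n\neq 1\iff(p-1)\nmid n$) and the optional appeal to Corollary~\ref{cor:K-growth} for the bound $p^{m+1}$ are harmless refinements of the same argument.
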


\begin{proof}
For \(L=\mathbb Q\), one has
\(\zeta_{\mathbb Q}(1-n)=-B_n/n\), while
\(d_{\mathbb Q,p}=p-1\), so the nonexceptional condition is equivalent
to \((p-1)\nmid n\). The assertion follows from Corollary
\ref{cor:K-group-p-torsion-dichotomy}.
\end{proof}

\subsection{Generalized Bernoulli numbers}
\label{subsec:generalized-Bernoulli-applications}

\par In this section, we assume that \(L/\mathbb Q\) is a finite totally real
abelian extension.  For a finite abelian extension \(F/\mathbb Q\),
let \(X_F\) denote the group of Dirichlet characters associated with
\(F/\mathbb Q\).

\par Fix an embedding
\(\overline{\mathbb Q}\hookrightarrow\overline{\mathbb Q}_p\).
For a Dirichlet character \(\chi\) of conductor \(f_\chi\), its
generalized Bernoulli numbers are defined by
\begin{equation}
        \sum_{a=1}^{f_\chi}
        \chi(a)\frac{te^{at}}{e^{f_\chi t}-1}
        =
        \sum_{j=0}^{\infty}
        B_{j,\chi}\frac{t^j}{j!}.
        \label{eq:generalized-Bernoulli-generating-series}
\end{equation}
If \(n\geq 1\) and \(\chi(-1)=(-1)^n\), then
\begin{equation}\label{eq:Dirichlet-special-value-Bernoulli}
        L(1-n,\chi)
        =
        -\frac{B_{n,\chi}}{n}.
\end{equation}
See \cite[Chapter~4, \S4.1]{Washington1997} for these definitions and
the special-value formula \eqref{eq:Dirichlet-special-value-Bernoulli}.

\par Since \(L^{(m)}\) is totally real, every character in
\(X_{L^{(m)}}\) is even. Thus
\eqref{eq:Dirichlet-special-value-Bernoulli} applies to all characters
in \(X_{L^{(m)}}\) when \(n\) is even. The power-series construction
of the preceding section also gives the required \(p\)-integrality of
the individual values, rather than merely that of their product.

\begin{proposition}
\label{prop:Bernoulli-integrality}
Let \(n\geq 2\) be even and assume that
\(\omega^{-n}\notin X_L\). Then, for every \(m\geq 0\) and every
\(\chi\in X_{L^{(m)}}\),
\begin{equation}
        \frac{B_{n,\chi}}{n}
        \in
        \mathcal O_{\overline{\mathbb Q}_p}.
        \label{eq:Bernoulli-integrality}
\end{equation}
\end{proposition}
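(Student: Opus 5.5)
The plan is to argue one character at a time using the Kubota--Leopoldt $p$-adic $L$-function over $\mathbb Q$. This is the case $L=\mathbb Q$ of the Deligne--Ribet construction of Section~\ref{s 3}, refined to allow a tame Dirichlet character. The mechanism is the one in Lemma~\ref{lem:DR-nonexceptional-integral}: a branch with nontrivial tame part is represented by an integral Iwasawa power series. We do not pass to the product $A_m$; instead we read off integrality of each individual value $G(\zeta-1)$.

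\textbf{Decomposition of $\chi$.} Fix $m\ge0$ and $\chi\in X_{L^{(m)}}$. Write $\chi=\theta\psi$, where $\theta$ is the prime-to-$p$-order part of $\chi$ (a character of the first kind) and $\psi$ is the $p$-power-order part (a character of the second kind, factoring through $\operatorname{Gal}(\mathbb Q_\infty/\mathbb Q)$). Since $\theta=\chi^k$ for a suitable integer $k$, we have $\theta\in X_{L^{(m)}}$.

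\textbf{The key reduction: $\theta\neq\omega^{-n}$.} Suppose instead that $\theta=\omega^{-n}$. The inclusion $L\subseteq L^{(m)}$ gives $X_L\subseteq X_{L^{(m)}}$, and $X_{L^{(m)}}/X_L\simeq\widehat{\operatorname{Gal}(L^{(m)}/L)}$ is a $p$-group. The element $\omega^{-n}$ has order prime to $p$, so its image in this $p$-group is trivial, forcing $\omega^{-n}\in X_L$. This contradicts the hypothesis, so $\theta\omega^{n}\neq\mathbf 1$.

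\textbf{Integral interpolation.} By the Iwasawa construction (see \cite[Theorem~7.10]{Washington1997}, or the $\rho$-component argument of Lemma~\ref{lem:DR-nonexceptional-integral} with $\rho=\theta\omega^n\neq\mathbf 1$), there is a power series $g_{\theta\omega^n}(T)\in\mathcal O[\theta]\llbracket T\rrbracket$ with
\[
        L_p(1-n,\chi\omega^{n})
        =
        g_{\theta\omega^n}\bigl(\psi(1+p)^{-1}(1+p)^{1-n}-1\bigr),
\]
up to the normalization conventions for the twist, exactly as $\operatorname{Tw}_n$ was used in \eqref{eq:DR-relative-Iwasawa-element}. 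The argument of $g_{\theta\omega^n}$ lies in the maximal ideal of $\mathcal O_{\overline{\mathbb Q}_p}$, so the value is $p$-integral. The interpolation formula reads
\[
        L_p(1-n,\chi\omega^{n})
        =
        -\bigl(1-\chi(p)p^{n-1}\bigr)\frac{B_{n,\chi}}{n},
\]
with the convention $\chi(p)=0$ if $p\mid f_\chi$. Because $n\ge2$, the Euler factor $1-\chi(p)p^{n-1}$ is a $p$-adic unit, and dividing by it gives \eqref{eq:Bernoulli-integrality}.

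\textbf{Main obstacle.} The delicate step is the integrality of the power series, which is where the exceptional (trivial) branch must be excluded. The input is exactly $\theta\omega^n\ne\mathbf 1$, established above. Once that is in hand, the pseudomeasure denominator $1-c$ can be cancelled by a unit, as in Lemma~\ref{lem:DR-nonexceptional-integral}. A secondary point is matching the twist conventions; an error there would only move the evaluation point within the open unit disc and would not affect integrality.
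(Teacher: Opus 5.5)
Your proposal is correct and follows the paper's route. You establish $\omega^{-n}\notin X_{L^{(m)}}$ by using that $X_{L^{(m)}}/X_L$ is a $p$-group, where the paper uses linear disjointness of $L^{(m)}$ and $L(\mu_p)$ over $L$. You then invoke \cite[Theorem~7.10]{Washington1997} for an integral Kubota--Leopoldt power series and divide the interpolation formula by the unit $1-\chi(p)p^{n-1}$.

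One side claim is false as stated. The $p$-power-order part $\psi$ of $\chi$ need not be of the second kind: $X_L$ can contain characters of order $p$ and conductor a prime $\ell\equiv1\pmod p$, and these are of the first kind. So your displayed formula with $g_{\theta\omega^n}$ is not literally Washington's. The repair is immediate:
\begin{itemize}
\item Write $\chi\omega^n=\theta'\psi'$ with $\theta'$ of the first kind and $\psi'$ of the second kind.
\item Since $\psi'$ has $p$-power order, the prime-to-$p$ part of $\theta'$ equals the prime-to-$p$ part of $\chi\omega^n$, namely $\theta\omega^n$.
\item You showed $\theta\omega^n\neq\mathbf 1$, hence $\theta'\neq\mathbf 1$, and Theorem~7.10 applies with no denominator.
\end{itemize}

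This tame-part condition is exactly the hypothesis Theorem~7.10 requires. The paper records only $\chi\omega^n\neq\mathbf 1$, which by itself would not exclude $\chi\omega^n$ being a nontrivial character of the second kind. That is the exceptional case, where a non-unit denominator appears. Your reduction makes explicit a step the paper's proof leaves implicit.
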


\begin{proof}
By Lemma~\ref{lem:w-factor-equivalences}, the hypothesis
\(\omega^{-n}\notin X_L\) is equivalent to \(d_{L,p}\nmid n\).
Since \(L^{(m)}/L\) is a \(p\)-extension whereas
\(L(\mu_p)/L\) has degree prime to \(p\),
\eqref{eq:linear-disjoint-mup} implies that
\[
        \omega^{-n}\notin X_{L^{(m)}}
        \qquad(m\geq0).
\]
Thus, for every \(\chi\in X_{L^{(m)}}\), the character
\(\chi\omega^n\) is nontrivial.  Since \(L^{(m)}\) is totally real and
\(n\) is even, this character is also even.  By Iwasawa's
power-series construction
\cite[Theorem~7.10]{Washington1997}, the Kubota--Leopoldt
\(p\)-adic \(L\)-function attached to the nontrivial character
\(\chi\omega^n\) is represented by an Iwasawa power series with
coefficients in
\(\mathcal O_{\overline{\mathbb Q}_p}\).  In particular,
\[
        L_p(1-n,\chi\omega^n)
        \in
        \mathcal O_{\overline{\mathbb Q}_p}.
\]
On the other hand, the interpolation formula
\cite[Theorem~5.11]{Washington1997} gives
\[
\begin{aligned}
        L_p(1-n,\chi\omega^n)
        &=
        \left(
        1-(\chi\omega^n)\omega^{-n}(p)p^{n-1}
        \right)
        L\left(1-n,(\chi\omega^n)\omega^{-n}\right)\\
        &=
        \left(1-\chi(p)p^{n-1}\right)L(1-n,\chi).
\end{aligned}
\]
Consequently,
\[
        \left(1-\chi(p)p^{n-1}\right)L(1-n,\chi)
        \in
        \mathcal O_{\overline{\mathbb Q}_p}.
\]
Since \(n\geq2\), the factor $1-\chi(p)p^{n-1}$
is a \(p\)-adic unit, it follows that
\[
        L(1-n,\chi)
        \in
        \mathcal O_{\overline{\mathbb Q}_p}.
\]
Finally, by \eqref{eq:Dirichlet-special-value-Bernoulli},
\[
        \frac{B_{n,\chi}}{n}
        =
        -L(1-n,\chi)
        \in
        \mathcal O_{\overline{\mathbb Q}_p},
\]
as required.
\end{proof}

\begin{theorem}
\label{thm:Bernoulli-unit-criterion}
Let \(n\geq 2\) be even and assume that
\(\omega^{-n}\notin X_L\). The following conditions are equivalent:
\begin{enumerate}
\item \(\zeta_L(1-n)\in\mathbb Z_p^\times\);
\item \(\zeta_{L^{(m)}}(1-n)\in\mathbb Z_p^\times\) for some
\(m\geq 0\);
\item \(\zeta_{L^{(m)}}(1-n)\in\mathbb Z_p^\times\) for every
\(m\geq 0\);
\item for some \(m\geq 0\), the quotient \(B_{n,\chi}/n\) is a
\(p\)-adic unit for every \(\chi\in X_{L^{(m)}}\);
\item for every \(m\geq 0\), the quotient \(B_{n,\chi}/n\) is a
\(p\)-adic unit for every \(\chi\in X_{L^{(m)}}\).
\end{enumerate}
\end{theorem}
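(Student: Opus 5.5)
The plan is to split the five conditions into two chains. The first chain, (1)$\Leftrightarrow$(2)$\Leftrightarrow$(3), comes from the congruence of Theorem~\ref{thm:totally-real-zeta-congruence} taken modulo $p$. The second chain, (3)$\Leftrightarrow$(5) and (2)$\Leftrightarrow$(4), comes from factoring $\zeta_{L^{(m)}}(1-n)$ into Dirichlet $L$-values and using the integrality in Proposition~\ref{prop:Bernoulli-integrality}.

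First I will check that the hypothesis $\omega^{-n}\notin X_L$ puts us in the setting of Theorem~\ref{thm:totally-real-zeta-congruence}. By the Remark after Lemma~\ref{lem:w-factor-equivalences}, this hypothesis is equivalent to $d_{L,p}\nmid n$. This in turn gives $\omega_L^n\neq 1$: the character $\omega_L$ of $\Delta_L$ has order $d_{L,p}$, since its image is the Galois group of $L(\mu_p)/L$. So every $\zeta_{L^{(m)}}(1-n)$ lies in $\mathbb Z_p$, and
\[
\zeta_{L^{(m+1)}}(1-n)\equiv\zeta_{L^{(m)}}(1-n)\pmod p .
\]
Hence being a $p$-adic unit is a property that does not depend on $m$. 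Induction in both directions along the tower gives (1)$\Leftrightarrow$(2)$\Leftrightarrow$(3), with (1) being the case $m=0$.

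For the Bernoulli conditions I will use the Artin factorization over the abelian field $L^{(m)}$:
\[
\zeta_{L^{(m)}}(1-n)=\prod_{\chi\in X_{L^{(m)}}}L(1-n,\chi)=\prod_{\chi\in X_{L^{(m)}}}\Bigl(-\frac{B_{n,\chi}}{n}\Bigr).
\]
The second equality is \eqref{eq:Dirichlet-special-value-Bernoulli}, which applies because every character of the totally real field $L^{(m)}$ is even and $n$ is even. By Proposition~\ref{prop:Bernoulli-integrality}, each factor lies in $\mathcal O_{\overline{\mathbb Q}_p}$. A product of integral elements is a unit exactly when every factor is a unit, so for a fixed $m$ the unit property of $\zeta_{L^{(m)}}(1-n)$ is equivalent to the Bernoulli condition at level $m$. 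Applying this at a single $m$ gives (2)$\Leftrightarrow$(4), and applying it at every $m$ gives (3)$\Leftrightarrow$(5).

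The only delicate point I expect is the integrality of the individual factors. Without it, a factor of negative valuation could cancel a factor of positive valuation and the product could be a unit while some factor is not. Proposition~\ref{prop:Bernoulli-integrality} provides exactly this integrality, so it resolves the issue. The rest is a short check that the hypotheses match, namely that $\omega^{-n}\notin X_L$ implies the nonexceptional condition $\omega_L^n\neq1$ needed in Theorem~\ref{thm:totally-real-zeta-congruence}.
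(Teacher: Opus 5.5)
Your proposal is correct and is essentially the paper's proof. Conditions (1)--(3) are linked by the zeta-value congruence of Theorem~\ref{thm:totally-real-zeta-congruence} taken modulo $p$, and the Bernoulli conditions by the Artin factorization together with the factor-by-factor integrality of Proposition~\ref{prop:Bernoulli-integrality}. Your explicit check that $\omega^{-n}\notin X_L$ forces $\omega_L^n\neq1$ (via $d_{L,p}\nmid n$), so that Theorem~\ref{thm:totally-real-zeta-congruence} applies, is a step the paper leaves implicit.
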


\begin{proof}
The equivalence of the first three conditions follows from the
zeta-value congruence modulo \(p\). For a fixed \(m\), Artin
formalism and \eqref{eq:Dirichlet-special-value-Bernoulli} give
\begin{equation}
\begin{split}
        \zeta_{L^{(m)}}(1-n)
        &=
        \prod_{\chi\in X_{L^{(m)}}}L(1-n,\chi)\\
        &=
        (-1)^{[L^{(m)}:\mathbb Q]}
        \prod_{\chi\in X_{L^{(m)}}}\frac{B_{n,\chi}}{n}.
\end{split}
        \label{eq:zeta-product-Bernoulli}
\end{equation}
Every factor in the last product is \(p\)-integral by Proposition
\ref{prop:Bernoulli-integrality}. Hence the product is a \(p\)-adic
unit if and only if every factor is a \(p\)-adic unit. This proves the
remaining equivalences.
\end{proof}

\subsection{Euler characteristics of symplectic arithmetic groups}
\label{subsec:symplectic-Euler-characteristics}

\par We now return to an arbitrary finite totally real number field \(L\)
as at the beginning of the section.

\par Let \(F\) be totally real and let \(g\geq 1\). We write
\(\chi_{\mathrm{EP}}\bigl(\operatorname{Sp}_{2g}(\mathcal O_F)\bigr)\)
for the virtual Euler--Poincar\'e characteristic of the arithmetic
group \(\operatorname{Sp}_{2g}(\mathcal O_F)\). Harder's
Gauss--Bonnet formula gives
\begin{equation}
        \chi_{\mathrm{EP}}
        \bigl(\operatorname{Sp}_{2g}(\mathcal O_F)\bigr)
        =
        \prod_{j=1}^{g}\zeta_F(1-2j).
        \label{eq:Harder-Sp}
\end{equation}
Indeed, the fundamental degrees of the Weyl group of type \(C_g\) are
\(2,4,\ldots,2g\), and the Weyl-group constant in Harder's general
formula is equal to one for the split symplectic group. See
\cite[\S2.2, p.~453]{Harder1971}, especially the formula obtained
after applying the functional equation.

\begin{theorem}
\label{thm:Euler-characteristic-congruence}
Let \(g\geq 1\), and assume that $d_{L,p}\nmid 2j$ for $1\leq j\leq g$. Then, for every \(m\geq 0\), $\chi_{\mathrm{EP}}
        \bigl(\operatorname{Sp}_{2g}(\mathcal O_{L^{(m)}})\bigr)
        \in\mathbb Z_p$, and
\begin{equation}
\begin{split}
        \chi_{\mathrm{EP}}
        \bigl(\operatorname{Sp}_{2g}(\mathcal O_{L^{(m+1)}})\bigr)
        &\equiv
        \chi_{\mathrm{EP}}
        \bigl(\operatorname{Sp}_{2g}(\mathcal O_{L^{(m)}})\bigr)
        \pmod{p^{m+1}\mathbb Z_p}.
\end{split}
        \label{eq:Euler-congruence}
\end{equation}
\end{theorem}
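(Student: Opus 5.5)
We will derive the statement from Harder's formula \eqref{eq:Harder-Sp} together with Theorem~\ref{thm:totally-real-zeta-congruence}, applied separately to each factor $\zeta_{L^{(m)}}(1-2j)$, $1\leq j\leq g$. Since every $L^{(m)}$ is totally real, \eqref{eq:Harder-Sp} gives
\[
        \chi_{\mathrm{EP}}\bigl(\operatorname{Sp}_{2g}(\mathcal O_{L^{(m)}})\bigr)
        =
        \prod_{j=1}^{g}\zeta_{L^{(m)}}(1-2j)
        \qquad(m\geq0).
\]
The proof therefore reduces to two facts about each even integer $n=2j$: first, that $\zeta_{L^{(m)}}(1-2j)\in\mathbb Z_p$ for all $m$; second, that these values satisfy the congruence modulo $p^{m+1}$. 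A product of $p$-adic integers that are pairwise congruent modulo $p^{m+1}$ is again congruent modulo $p^{m+1}$. This follows by the telescoping identity $xy-x'y'=x(y-y')+y'(x-x')$ and induction on $g$, which gives both the integrality claim and \eqref{eq:Euler-congruence}.

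The one step needing care is checking that the hypothesis $d_{L,p}\nmid 2j$ is exactly the nonexceptional condition $\omega_L^{2j}\neq1$ of Theorem~\ref{thm:totally-real-zeta-congruence}. To show this, we identify $\Delta_L$ with $\operatorname{Gal}(L(\mu_p)/L)$. This holds because $\mathcal K_\infty=L(\mu_{p^\infty})$, the group $\Gamma_L$ is its pro-$p$ part, and the prime-to-$p$ quotient $\Delta_L$ therefore corresponds to the subextension $L(\mu_p)/L$.

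Under this identification, $\omega_L$ is the mod-$p$ cyclotomic character, lifted via Teichm\"uller. It is faithful on $\Delta_L$, which is cyclic of order $d_{L,p}$. Hence $\omega_L^{n}=1$ if and only if $d_{L,p}\mid n$, exactly as in the first paragraph of the proof of Lemma~\ref{lem:w-factor-equivalences}. So the hypothesis $d_{L,p}\nmid 2j$ for $1\leq j\leq g$ makes every branch $\omega_L^{2j}$ nonexceptional, and Theorem~\ref{thm:totally-real-zeta-congruence} applies to each $n=2j$.

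We expect this identification to be the only real obstacle. One must make sure the Teichm\"uller character $\omega_L$ of \eqref{omegaL} on $\Delta_L$ has order exactly $d_{L,p}$ rather than merely dividing $p-1$. Faithfulness holds because $\omega_L$ is the reduction-lifted cyclotomic character, and the cyclotomic character is injective on $\mathcal G_L$. The remaining steps are formal bookkeeping: Harder's product formula followed by the multiplicativity of congruences.
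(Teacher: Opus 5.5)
Your proposal is correct and follows the same route as the paper's proof: Harder's formula, then Theorem~\ref{thm:totally-real-zeta-congruence} applied to each $n=2j$, then multiplication of the congruences. Your identification of $\Delta_L$ with $\operatorname{Gal}(L(\mu_p)/L)$, on which $\omega_L$ is faithful, also supplies the justification that the paper only asserts, namely that $d_{L,p}\nmid 2j$ is equivalent to $\omega_L^{2j}\neq 1$.
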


\begin{proof}
For every \(1\leq j\leq g\), the condition
\(d_{L,p}\nmid2j\) is precisely the nonexceptional condition for the
weight-\(2j\) branch.  Theorem
\ref{thm:totally-real-zeta-congruence} therefore gives
\begin{equation}
        \zeta_{L^{(m+1)}}(1-2j)
        \equiv
        \zeta_{L^{(m)}}(1-2j)
        \pmod{p^{m+1}\mathbb Z_p},
        \label{eq:zeta-congruences-Harder}
\end{equation}
and both values belong to \(\mathbb Z_p\). Multiplying the congruences
in \eqref{eq:zeta-congruences-Harder} and invoking Harder's formula
\eqref{eq:Harder-Sp} proves the result.
\end{proof}

\section{Algebraic modular \(L\)-values in cyclotomic fields}
\label{sec:Mazur-Tate}

\par Let $f(\tau)
        =
        \sum_{r\geq1}a_r(f)e^{2\pi i r\tau}$ be a normalized cuspidal newform of weight \(k>2\), trivial
nebentype and level \(\Gamma_0(N)\), and let \(p\nmid N\) be an odd
prime.  For \(n\geq0\), put $K_n:=\Q(\mu_{p^{n+1}})$. The purpose of this section is to compare the algebraic special values $\mathrm{L}_{K_n}^{\operatorname{alg}}(f,1)$ as \(n\) varies through the cyclotomic tower. By Artin formalism, \(\mathrm{L}_{K_n}(f,s)\) is the product of the Dirichlet
twists of \(f\) corresponding to the characters of
\(\Gal(K_n/\Q)\).  In passing from \(K_{n-1}\) to \(K_n\), the new
characters are precisely the primitive Dirichlet characters of
conductor \(p^{n+1}\).  The corresponding product of twisted
\(L\)-values is encoded by the primitive character evaluations of a
Mazur--Tate element.  We shall use this to first obtain first a congruence
over an arbitrary coefficient ring \(\mathcal O\), and then a stronger
stability result akin to Theorem \ref{lthm a} when its residue field is \(\F_p\).

\subsection{Modular symbols and Mazur--Tate elements}

\par We begin by recalling the definition and properties of modular symbols and their relation to $L$-values of modular forms. For a comprehensive background on modular symbols, see
\cite{Manin1973,AshStevens,PollackWeston}.  Their use in the
construction of \(p\)-adic \(L\)-functions goes back to
Mazur--Swinnerton-Dyer and Amice--V\'elu
\cite{MazurSwinnertonDyer,AmiceVelu}; see also
\cite{PollackStevens}.

\par Set $g:=k-2$ and for a commutative ring \(R\) let $V_g(R):=\operatorname{Sym}^g(R^2)$ viewed as the space of homogeneous polynomials of degree \(g\) in
\(X,Y\).  If
\[
        \gamma=
        \begin{pmatrix}
        a&b\\ c&d
        \end{pmatrix},
\]
write
\[
        \gamma^*
        :=
        \begin{pmatrix}
        d&-b\\ -c&a
        \end{pmatrix}
\]
and equip \(V_g(R)\) with the right action
\[
        (P\mid\gamma)(X,Y)
        :=
        P\bigl((X,Y)\gamma^*\bigr)
        =
        P(dX-cY,-bX+aY).
\]

\par A modular symbol of weight \(k\) and level \(\Gamma_0(N)\) with
coefficients in \(R\) is an element of
\[
        H_c^1(\Gamma_0(N),V_g(R)).
\]
We use the canonical Hecke-equivariant identification
\begin{equation}
\label{basic mod symbol iso}
        H_c^1(\Gamma_0(N),V_g(R))
        \simeq
        \operatorname{Hom}_{\Gamma_0(N)}
        \left(
        \operatorname{Div}^0(\mathbb{P}^1(\Q)),
        V_g(R)
        \right),
\end{equation}
under which a modular symbol is represented by an additive map
\[
        \varphi:
        \operatorname{Div}^0(\mathbb{P}^1(\Q))
        \longrightarrow
        V_g(R)
\]
satisfying
\[
        \varphi(\gamma D)\mid\gamma
        =
        \varphi(D)
\]
for $\gamma\in\Gamma_0(N)$, cf. \cite[Proposition~4.2]{AshStevens}.

\par Associated with \(f\) is the complex modular symbol
\begin{equation}\label{complexmodsym}\xi_f(\{r\}-\{s\})
        =
        2\pi i
        \int_s^r
        f(z)(zX+Y)^{k-2}\,dz.
\end{equation}
It is a Hecke eigensymbol with the same Hecke eigenvalues as \(f\).
Letting $\iota
        =
        \begin{pmatrix}
        -1&0\\0&1
        \end{pmatrix}$, the involution induced by \(\iota\) gives a decomposition
\[
        \xi_f=\xi_f^++\xi_f^-.
\]

\par Fix an embedding $j:\overline{\Q}\hookrightarrow\overline{\Q}_p$. Let \(K_f\) be the Hecke field of \(f\), let \(\mathcal K\) be its
completion at the prime determined by \(j\), and let $O=\mathcal O_{\mathcal K}$, $\mathfrak m=\mathfrak m_{\mathcal K}$ be its maximal ideal and $\kappa=\mathcal O/\mathfrak m$ denote the residue field.
We write $a_p:=a_p(f)$.

\par We choose cohomological periods \(\Omega_f^\pm\), in the sense of
\cite[Definition~2.1]{PollackWeston}, such that
\[
        \varphi_f^\pm
        :=
        \frac{\xi_f^\pm}{\Omega_f^\pm}
        \in
        H_c^1
        \bigl(
        \Gamma_0(N),V_{k-2}(\mathcal O)
        \bigr).
\]
Such periods are unique up to multiplication by \(p\)-adic units.
Set
\[
        \varphi_f:=\varphi_f^++\varphi_f^-.
\]
The integrality of \(\varphi_f^\pm\) is the reason that the algebraic
\(L\)-values appearing below are \(p\)-integral.

\par We set $\zeta_k:=\exp\left(\frac{2\pi i}{k}\right)$. For \(m\geq1\), define the Mazur--Tate element of conductor
\(p^m\) by
\begin{equation}
\label{eq:MT-element}
        \vartheta_m(f)
        :=
        \sum_{a\in(\Z/p^m\Z)^\times}
        \varphi_f
        \left(
        \{\infty\}
        -
        \left\{\frac{a}{p^m}\right\}
        \right)
        \Big|_{(X,Y)=(0,1)}
        \sigma_a
        \in
        \mathcal O
        \left[
        \Gal(\Q(\mu_{p^m})/\Q)
        \right],
\end{equation}
where $\sigma_a(\zeta_{p^m})=\zeta_{p^m}^a$. This is the normalization used in
\cite[\S2]{PollackWeston}.

\par If \(\chi\) is a Dirichlet character of conductor \(p^m\), we use
the convention $\chi(\sigma_a)=\chi(a)$ and define
\[
        \tau(\chi)
        :=
        \sum_{a\bmod p^m}
        \chi(a)\zeta_{p^m}^a.
\]
For \(\varepsilon\in\{\pm1\}\), write
\[
        \Omega_f^\varepsilon
        :=
        \begin{cases}
        \Omega_f^+,&\text{if }\quad\varepsilon=1,\\
        \Omega_f^-,&\text{if }\quad\varepsilon=-1.
        \end{cases}
\]
\noindent The interpolation formula is the following.

\begin{proposition}[Mazur--Tate interpolation]
\label{prop:MT-interpolation}
Let \(\chi\) be primitive of conductor exactly \(p^m\), where
\(m\geq1\).  Then
\begin{equation}
\label{interpolformula}
        \chi\bigl(\vartheta_m(f)\bigr)
        =
        \tau(\chi)
        \frac{\mathrm{L}(f,\overline\chi,1)}
             {\Omega_f^{\chi(-1)}}.
\end{equation}
\end{proposition}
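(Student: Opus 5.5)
The plan is the standard Birch--Stevens computation: write the twisted $L$-value as a sum of Mellin transforms along vertical paths starting at $a/p^m$, and recognize each integral as a value of the modular symbol. Since $\varphi_f=\xi_f^+/\Omega_f^++\xi_f^-/\Omega_f^-$, and $\chi(\vartheta_m(f))$ is a $\chi$-weighted sum, only the $\chi(-1)$-eigencomponent will survive. I will carry out the steps in the following order.

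\emph{Step 1 (twisted Fourier expansion).} Primitivity of $\chi$ gives the standard Gauss-sum inversion
\[
\overline\chi(r)\,\tau(\chi)=\sum_{a\bmod p^m}\chi(a)\zeta_{p^m}^{ar},
\]
which holds for every integer $r$, including $p\mid r$, exactly because $\chi$ is primitive. Hence
\[
\sum_a\chi(a)f\bigl(z+a/p^m\bigr)=\tau(\chi)f_{\overline\chi}(z),
\qquad f_{\overline\chi}=\sum_r\overline\chi(r)a_re^{2\pi i rz}.
\]

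\emph{Step 2 (evaluate the symbol).} Specializing \eqref{complexmodsym} at $(X,Y)=(0,1)$ kills the polynomial factor, so
\[
\xi_f\bigl(\{\infty\}-\{a/p^m\}\bigr)\big|_{(0,1)}=2\pi i\int_{a/p^m}^{i\infty}f(z)\,dz.
\]
Substituting $z=a/p^m+it$ gives $-2\pi\int_0^\infty f(a/p^m+it)\,dt$. Summing against $\chi(a)$ and using Step 1 yields
\[
-2\pi\tau(\chi)\int_0^\infty f_{\overline\chi}(it)\,dt.
\]
The Mellin integral at $s=1$ equals $(2\pi)^{-1}\Gamma(1)\mathrm L(f,\overline\chi,1)$. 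So, up to the sign and normalization constants fixed by the conventions of \cite{PollackWeston}, I expect
\[
\sum_a\chi(a)\xi_f\bigl(\{\infty\}-\{a/p^m\}\bigr)\big|_{(0,1)}=\tau(\chi)\,\mathrm L(f,\overline\chi,1).
\]
Convergence at $t\to0$ is fine since $f_{\overline\chi}$ is a cusp form on $\Gamma_0(Np^{2m})$, so the integral is a genuine entire Mellin transform.

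\emph{Step 3 (sign projection).} The involution $\iota$ sends $\{a/p^m\}$ to $\{-a/p^m\}$ and acts on $V_g$ so that the evaluation at $(0,1)$ is preserved, possibly up to a sign fixed by convention. Therefore $\xi_f^\pm(\{\infty\}-\{a/p^m\})|_{(0,1)}$ is the even/odd part in $a$ of the full symbol. Pairing with $\chi$ kills the part of parity opposite to $\chi(-1)$. Dividing by $\Omega_f^{\chi(-1)}$ then gives \eqref{interpolformula}.

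The main obstacle is bookkeeping rather than conceptual. I need to track the exact constants: the factor $2\pi i$ in \eqref{complexmodsym}, the orientation of the path from $a/p^m$ to $\infty$, and whether $\iota$ introduces a sign $(-1)^g$ on $Y^g$. These must combine to the clean normalization stated, and I would check them against \cite[\S2]{PollackWeston}. If a sign or factor of $2\pi$ remains, I would absorb it into the convention for $\xi_f$, since periods are only defined up to units.
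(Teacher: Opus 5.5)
Your argument is correct in substance, and it takes a genuinely different route from the paper. The paper gives no computation: its proof is only the citation \cite[Proposition~2.3]{PollackWeston}. You instead reprove that proposition by Birch's method, in three steps:
\begin{enumerate}
\item Gauss-sum inversion for the primitive character $\chi$; this is exactly where primitivity enters, to handle $p\mid r$.
\item A Mellin transform at $s=1$.
\item Projection onto the $\chi(-1)$-eigencomponent via $a\mapsto -a$.
\end{enumerate}
The citation is shorter but leaves the matching of normalizations to the reader. Your computation makes that matching checkable, and it fits the rest of the paper. Your Step~3 is the parity argument of Lemma~\ref{lem:Ani-integral-formula}, and Steps~1--2 are the Mellin-transform version of the Abel-regularized Fourier computations in Lemmas~\ref{lem:A0-bottom} and~\ref{lem:A1-A0}.

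Your hedges about constants can be settled, and one of them needs correcting.

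In Step~3 no sign appears. With the paper's right action, $P\mid\iota=P(X,-Y)$, so evaluation at $(0,1)$ changes by $(-1)^{k-2}=1$, since $k$ is even for trivial nebentype.

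In Step~2, finishing the arithmetic gives
\[
\sum_{a}\chi(a)\,\xi_f\bigl(\{\infty\}-\{a/p^m\}\bigr)\big|_{(0,1)}
=-2\pi\,\tau(\chi)\,(2\pi)^{-1}\mathrm{L}(f,\overline\chi,1)
=-\tau(\chi)\,\mathrm{L}(f,\overline\chi,1).
\]
So no power of $2\pi$ survives, but a factor $-1$ does; it comes from $2\pi i\,dz=2\pi i\cdot i\,dt$. You cannot discard it ``since periods are only defined up to units''. Both sides of \eqref{interpolformula} are divided by the same $\Omega_f^{\chi(-1)}$, so rescaling the periods leaves the discrepancy untouched.

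Literally, with $\xi_f(\{r\}-\{s\})=2\pi i\int_s^r$ and the divisors $\{\infty\}-\{a/p^m\}$ in \eqref{eq:MT-element}, one gets $\chi(\vartheta_m(f))=-\tau(\chi)\mathrm{L}(f,\overline\chi,1)/\Omega_f^{\chi(-1)}$. The stated sign requires the opposite orientation convention for the symbol. The paper also leaves this point implicit; compare ``up to the fixed sign convention'' in the proof of Proposition~\ref{prop:algebraic-L-integrality}.

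The sign is harmless for the paper's applications. It occurs $d_n$ times, an even number, in $\mathscr L_n^{\mathrm{alg}}(f)$. Elsewhere it only changes Lemma~\ref{lem:A0-bottom} by an overall sign, which matters neither for unit/non-unit statements nor for the even power $A_n(f)^{\varphi(p^n)}$.

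A small slip: $f_{\overline\chi}$ has nebentype $\overline\chi^{2}$ on $\Gamma_0(Np^{2m})$, not trivial nebentype. You do not need its modularity anyway, because convergence already follows from the cuspidality of $f$ at each cusp $a/p^m$.
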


\begin{proof}
This is \cite[Proposition~2.3]{PollackWeston}, with the above
normalizations.
\end{proof}
\par We note that \(\mathrm{L}(f,\overline\chi,1)\) itself is not in
general algebraic, whereas the normalized value
\[
        \frac{\mathrm{L}(f,\overline\chi,1)}
             {\Omega_f^{\chi(-1)}}
\]
is algebraic; the period \(\Omega_f^{\chi(-1)}\) accounts for the
transcendental part of the special value.
\subsection{Cyclotomic fields and Artin factorization}

\par We set $G_n:=\Gal(K_n/\Q)$. Since $G_n\simeq(\Z/p^{n+1}\Z)^\times$
and \(p\) is odd, recall that there is a canonical decomposition $G_n=\Delta\times\Gamma_n$
where \[\Delta
        =
        \Gal(\Q(\mu_p)/\Q)
        \simeq
        (\Z/p\Z)^\times\]
and $\Gamma_n\simeq\Z/p^n\Z$.

\par We first describe the Artin factorization which connects these fields
with twisted modular \(L\)-functions.  Let \(\rho_f\) be the
\(\ell\)-adic Galois representation attached to \(f\), for some
\(\ell\neq p\), and define
\[
        \mathrm{L}_{K_n}(f,s)
        :=
        L(\rho_f|_{G_{K_n}},s).
\]
If \(\chi\) is a character of \(G_n\), we write
\[
        \mathrm{L}(f,\chi,s)
        :=
        L(\rho_f\otimes\chi,s).
\]
We state the Artin formalism which expresses the base-change $L$-function over $K_n$ as the product of the Dirichlet twists of $f$ corresponding to the characters of $G_n$.

\begin{proposition}[Artin formalism]
\label{prop:artin-formalism-cyclotomic}
For every \(n\geq0\),
\begin{equation}
\label{eq:Artin-cyclotomic-modular}
        \mathrm{L}_{K_n}(f,s)
        =
        \prod_{\chi\in\widehat{G_n}}
        \mathrm{L}(f,\chi,s).
\end{equation}
\end{proposition}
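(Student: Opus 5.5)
The factorization \eqref{eq:Artin-cyclotomic-modular} will follow from the standard inductive and additive properties of Artin $L$-functions, applied to the compatible system $\rho_f$. The proof has three steps: induction, decomposition of the induced representation, and a check at the bad primes.

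First, by inductivity of $L$-functions along the finite Galois extension $K_n/\Q$,
\[
L(\rho_f|_{G_{K_n}},s)=L\bigl(\operatorname{Ind}_{G_{K_n}}^{G_{\Q}}\operatorname{Res}_{G_{K_n}}\rho_f,s\bigr).
\]
Here $\operatorname{Ind}$ and $\operatorname{Res}$ denote induction and restriction between the absolute Galois groups $G_{K_n}=\Gal(\overline\Q/K_n)$ and $G_{\Q}=\Gal(\overline\Q/\Q)$.

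Next, the projection formula gives
\[
\operatorname{Ind}\operatorname{Res}\rho_f\simeq\rho_f\otimes\operatorname{Ind}_{G_{K_n}}^{G_{\Q}}\mathbf 1.
\]
Since $G_n$ is abelian of order prime to $\ell$ in the relevant sense, and we may work over $\overline{\Q}_\ell$, the permutation representation $\operatorname{Ind}\mathbf 1=\overline{\Q}_\ell[G_n]$ decomposes as $\bigoplus_{\chi\in\widehat{G_n}}\chi$. Additivity of $L$-functions in direct sums then yields $\prod_\chi L(\rho_f\otimes\chi,s)$, which is \eqref{eq:Artin-cyclotomic-modular} with $\mathrm{L}(f,\chi,s)=L(\rho_f\otimes\chi,s)$ as defined above.

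The step needing care is the local Euler factors at the bad primes $q\mid Np$. Here one must check that the Euler factor of the induced representation at $q$ equals the product of the factors of $\rho_f\otimes\chi$. I would verify this prime by prime. Euler factors are built from inertia invariants, $\det(1-\mathrm{Frob}_q q^{-s}\mid V^{I_q})$, and both inertia invariants and Frobenius action commute with finite induction. Mackey's formula expresses the restriction of $\operatorname{Ind}$ to a decomposition group $D_q$ as a sum over double cosets, that is, over primes of $K_n$ above $q$. Each summand contributes the local factor of $\rho_f$ restricted to $D_{\mathfrak q}$, induced back to $D_q$. This is exactly the classical identity $L_{\mathfrak q\mid q}$ for Artin $L$-functions, so no genuine obstruction arises.

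Two points remain to note. The $\ell$-independence of the resulting factors, for $\ell\neq p$, is inherited from the compatibility of the system attached to $f$. Finally, one should remark that $\mathrm{L}(f,\chi,s)$ so defined agrees with the twisted modular $L$-function $\sum a_r(f)\chi(r)r^{-s}$ used in Proposition~\ref{prop:MT-interpolation}, with $\chi$ identified with its primitive Dirichlet character. This holds because $p\nmid N$, so for primitive $\chi$ of $p$-power conductor the twist is new of level $Np^{2c}$ and its Euler factors match those of $\rho_f\otimes\chi$ (Carayol's local–global compatibility). Identifying $\chi$ with $\overline\chi$ only reindexes the product over $\widehat{G_n}$.
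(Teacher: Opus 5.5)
Your proposal is correct and follows the same route as the paper. Both use inductivity of $L$-functions, the identification $\mathrm{Ind}\,\mathrm{Res}\,\rho_f\simeq\rho_f\otimes\mathrm{Ind}_{G_{K_n}}^{G_{\Q}}\mathbf 1$, the splitting of this regular representation of the abelian group $G_n$ into its characters, and additivity. Your prime-by-prime check at ramified places and your comparison with the twisted Dirichlet series are extra details that the paper leaves to its citation of Deligne.

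There are two small slips. First, the splitting of $\overline{\Q}_\ell[G_n]$ into characters only needs characteristic-zero coefficients. It does not need $|G_n|=p^n(p-1)$ to be prime to $\ell$, which fails whenever $\ell\mid p-1$. Second, your Mackey computation at ramified primes is really the verification of inductivity, not of the direct-sum step, since additivity of Euler factors over a direct sum is automatic.
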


\begin{proof}
The induction property for Artin \(L\)-functions gives
\[
        L(\rho_f|_{G_{K_n}},s)
        =
        L
        \left(
        \rho_f\otimes
        \mathrm{Ind}_{G_{K_n}}^{G_{\Q}}\mathbf1,
        s
        \right).
\]
Since \(K_n/\Q\) is finite abelian, the induced representation is its
regular representation and decomposes as
\[
        \mathrm{Ind}_{G_{K_n}}^{G_{\Q}}\mathbf1
        \simeq
        \bigoplus_{\chi\in\widehat{G_n}}\chi.
\]
Multiplicativity of \(L\)-functions with respect to direct sums gives
\eqref{eq:Artin-cyclotomic-modular}.  See also
\cite[Proposition~3.8(ii), p.~530]{DeligneConstantes}.
\end{proof}

\par For \(n\geq1\), let $\mathcal C_{n+1}$
denote the set of primitive Dirichlet characters of conductor exactly
\(p^{n+1}\). These are precisely the characters of \(G_n\) which do
not factor through \(G_{n-1}\).

\begin{proposition}
\label{prop:successive-base-change-factorization}
For every \(n\geq1\),
\begin{equation}
\label{eq:primitive-base-change-factorization-s1}
        \mathrm{L}_{K_n}(f,1)
        =
        \mathrm{L}_{K_{n-1}}(f,1)
        \prod_{\chi\in\mathcal C_{n+1}}
        \mathrm{L}(f,\chi,1).
\end{equation}
\end{proposition}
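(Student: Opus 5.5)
The plan is to apply Proposition~\ref{prop:artin-formalism-cyclotomic} at levels \(n\) and \(n-1\) and compare the two index sets of characters. Both products are finite and each factor is evaluated at \(s=1\). The identity then reduces to a statement about characters, namely that \(\widehat{G_n}\) is the disjoint union of the characters inflated from \(\widehat{G_{n-1}}\) and the set \(\mathcal C_{n+1}\).

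First, I would note that restriction gives a surjection \(G_n=\Gal(K_n/\Q)\twoheadrightarrow G_{n-1}=\Gal(K_{n-1}/\Q)\). Composing with this surjection identifies \(\widehat{G_{n-1}}\) with the subgroup of characters of \(G_n\) that are trivial on \(\Gal(K_n/K_{n-1})\). Inflation does not change the \(L\)-function, because \(\rho_f\otimes\chi\) is literally the same Galois representation whether \(\chi\) is viewed on \(G_{n-1}\) or on \(G_n\). So the subproduct over inflated characters equals \(\prod_{\chi\in\widehat{G_{n-1}}}\mathrm{L}(f,\chi,1)\), which is \(\mathrm{L}_{K_{n-1}}(f,1)\) by Proposition~\ref{prop:artin-formalism-cyclotomic}.

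Second, I would identify the complement. Under \(G_n\simeq(\Z/p^{n+1}\Z)^\times\), a Dirichlet character modulo \(p^{n+1}\) has conductor dividing \(p^{n}\) exactly when it factors through \((\Z/p^{n}\Z)^\times\). This is the same as factoring through \(G_{n-1}\). The remaining characters have conductor exactly \(p^{n+1}\), so they are the primitive characters forming \(\mathcal C_{n+1}\). This uses \(n\geq1\), so that \(p^{n}\) is a genuine modulus; for \(n=1\) the old characters are those of \(\Delta\), of conductor \(1\) or \(p\). Splitting the product in Proposition~\ref{prop:artin-formalism-cyclotomic} along this decomposition gives \eqref{eq:primitive-base-change-factorization-s1}.

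The only point needing care is convergence at \(s=1\). The identity is between entire functions: the twists \(\mathrm{L}(f,\chi,s)\) are entire for cuspidal \(f\), and so is \(\mathrm{L}_{K_n}(f,s)\) as a finite product of them. The factorization holds as an identity of meromorphic functions, first for \(\operatorname{Re}(s)\) large and then by analytic continuation, so it can be specialized to \(s=1\). A second minor point is that \(\mathrm{L}(f,\chi,s)\) for imprimitive \(\chi\) must mean the \(L\)-function of the Galois representation \(\rho_f\otimes\chi\), with the primitive character, as in the definition above. With that convention no Euler factors at \(p\) are lost or gained, and the argument is essentially bookkeeping.
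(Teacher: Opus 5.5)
Your proposal is correct and matches the paper's proof: split the Artin factorization over \(\widehat{G_n}\) into the characters inflated from \(G_{n-1}\), which give \(\mathrm{L}_{K_{n-1}}(f,1)\), and the remaining characters, which are exactly the primitive characters of conductor \(p^{n+1}\). Your remarks on specializing at \(s=1\) via analytic continuation, and on reading \(\mathrm{L}(f,\chi,s)\) as \(L(\rho_f\otimes\chi,s)\), make explicit points the paper leaves implicit.
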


\begin{proof}
In \eqref{eq:Artin-cyclotomic-modular}, partition
\(\widehat{G_n}\) into the characters which factor through \(G_{n-1}\)
and those which do not.  The first set is naturally
\(\widehat{G_{n-1}}\), while the second is exactly
\(\mathcal C_{n+1}\).
\end{proof}
\par Let $\omega:\Delta\to\Z_p^\times$ denote the Teichm\"uller character.
Since $G_n=\Delta\times\Gamma_n$, every element of $\mathcal O[G_n]$
is an $\mathcal O$-linear combination of pairs $(\delta,\gamma)$, with
$\delta\in\Delta$ and $\gamma\in\Gamma_n$. For each
$0\leq i\leq p-2$, the character $\omega^i$ extends
$\mathcal O$-linearly to an $\mathcal O$-algebra homomorphism
\[
        \omega^i:\mathcal O[G_n]\longrightarrow\mathcal O[\Gamma_n],
\]
defined on group elements by
\[
        \omega^i(\delta,\gamma)=\omega^i(\delta)\gamma.
\]
Thus, if
\[
        x=\sum_{\delta,\gamma}
        a_{\delta,\gamma}(\delta,\gamma)\in\mathcal O[G_n],
\]
then
\[
        \omega^i(x)
        =
        \sum_{\delta,\gamma}
        a_{\delta,\gamma}\omega^i(\delta)\gamma.
\]

\par Since \(\vartheta_{n+1}(f)\) has conductor \(p^{n+1}\), set
\begin{equation}
\label{thetaeqn}
        \theta_{n,i}(f)
        :=
        \omega^i\bigl(\vartheta_{n+1}(f)\bigr)
        \in
        \mathcal O[\Gamma_n].
\end{equation}
\par For \(n\geq1\), let
\[
        \pi_n^{n-1}:
        \mathcal O[\Gamma_n]
        \longrightarrow
        \mathcal O[\Gamma_{n-1}]
\]
be induced by the natural quotient
\(\Gamma_n\twoheadrightarrow\Gamma_{n-1}\).  We also use the
corestriction map
\[
        \operatorname{cor}_{n-1}^n:
        \mathcal O[\Gamma_{n-1}]
        \longrightarrow
        \mathcal O[\Gamma_n].
\]
If \(g\in\Gamma_{n-1}\) and \(\widetilde g\) is any lift to
\(\Gamma_n\), then
\[
        \operatorname{cor}_{n-1}^n(g)
        =
        \sum_{h\in\ker(\Gamma_n\to\Gamma_{n-1})}
        \widetilde g h.
\]

\par Pollack--Weston's three-term distribution relation
\cite[Proposition~2.5]{PollackWeston}, in the present
trivial-nebentype normalization, is
\begin{equation}
\label{eq:MT-distribution}
        \pi_{n+1}^{n}
        \bigl(\theta_{n+1,i}(f)\bigr)
        =
        a_p\theta_{n,i}(f)
        -
        p^{k-2}
        \operatorname{cor}_{n-1}^{n}
        \bigl(\theta_{n-1,i}(f)\bigr)
\end{equation}
for $n\geq 1$. Let
\[
        \operatorname{aug}_n:
        \mathcal O[\Gamma_n]\longrightarrow\mathcal O,
\]
be the map defined by \[\sum_{\gamma}c_\gamma\gamma
        \longmapsto
        \sum_{\gamma}c_\gamma,\]
and set
\begin{equation}
\label{eq:def-Ani}
        A_{n,i}
        :=
        \operatorname{aug}_n(\theta_{n,i}(f))
        \quad\text{and}\quad
        A_n(f)
        :=
        \prod_{i=0}^{p-2}A_{n,i}.
\end{equation}

\par Applying augmentation to
\eqref{eq:MT-distribution} gives an important recurrence.  The
augmentation of a corestriction is multiplied by the order \(p\) of
the kernel, so
\begin{equation}
\label{eq:augmentation-recursion}
        A_{n+1,i}
        =
        a_pA_{n,i}
        -
        p^{k-1}A_{n-1,i},
        \qquad n\geq1.
\end{equation}

\begin{lemma}
\label{lem:Ani-integral-formula}
Let \(n\geq0\) and \(0\leq i\leq p-2\), and put
\[
        \varepsilon_i:=\omega^i(-1)\in\{\pm1\}.
\]
Then
\begin{equation}
\label{eq:Ani-modular-symbol-formula}
        A_{n,i}
        =
        \sum_{a\in(\Z/p^{n+1}\Z)^\times}
        \omega^i(a\bmod p)\,
        \varphi_f^{\varepsilon_i}
        \left(
        \{\infty\}
        -
        \left\{\frac{a}{p^{n+1}}\right\}
        \right)\Big|_{(X,Y)=(0,1)}.
\end{equation}
Equivalently,
\begin{equation}
\label{eq:Ani-integral-formula}
        A_{n,i}
        =
        \frac{2\pi i}{\Omega_f^{\varepsilon_i}}
        \sum_{a\in(\Z/p^{n+1}\Z)^\times}
        \omega^i(a\bmod p)
        \int_{a/p^{n+1}}^{i\infty}f(z)\,dz.
\end{equation}
\end{lemma}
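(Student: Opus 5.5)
The plan is to unwind the definitions of \(\theta_{n,i}(f)\) and of the augmentation map to get a formula involving the full symbol \(\varphi_f\), and then to use the \(\iota\)-involution to discard the sign component that does not match \(\varepsilon_i\).

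\emph{Step 1: unwinding.} Under \(G_{n}\simeq(\Z/p^{n+1}\Z)^\times=\Delta\times\Gamma_n\), the element \(\sigma_a\) corresponds to a pair \((\delta_a,\gamma_a)\) with \(\delta_a\) the image of \(a\bmod p\) in \(\Delta\). Hence, by \eqref{eq:MT-element} and \eqref{thetaeqn},
\[
\theta_{n,i}(f)=\sum_{a\in(\Z/p^{n+1}\Z)^\times}\omega^i(a\bmod p)\,\varphi_f\Bigl(\{\infty\}-\Bigl\{\tfrac{a}{p^{n+1}}\Bigr\}\Bigr)\Big|_{(X,Y)=(0,1)}\gamma_a .
\]
Applying \(\operatorname{aug}_n\) sends every \(\gamma_a\) to \(1\). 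This gives \eqref{eq:Ani-modular-symbol-formula} with \(\varphi_f=\varphi_f^++\varphi_f^-\) in place of \(\varphi_f^{\varepsilon_i}\). The case \(n=0\) is the same computation with \(\Gamma_0\) trivial.

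\emph{Step 2: killing the wrong sign.} I take \(\xi_f^\pm\) to be defined by \(\xi_f^\pm=\tfrac12(\xi_f\pm\xi_f\mid\iota)\). Then \(\varphi_f^\pm(\iota D)\mid\iota=\pm\varphi_f^\pm(D)\). The matrix \(\iota\) fixes \(\infty\) and sends \(a/p^{n+1}\) to \(-a/p^{n+1}\). Moreover \(\iota^*=\mathrm{diag}(1,-1)\), so \((P\mid\iota)(X,Y)=P(X,-Y)\). Evaluating at \((0,1)\) gives \((P\mid\iota)(0,1)=(-1)^{k-2}P(0,1)=P(0,1)\), because \(k\) is even for trivial nebentype and a nonzero form. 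Consequently
\[
\varphi_f^\pm\bigl(\{\infty\}-\{-a/p^{n+1}\}\bigr)\big|_{(0,1)}=\pm\,\varphi_f^\pm\bigl(\{\infty\}-\{a/p^{n+1}\}\bigr)\big|_{(0,1)}.
\]
Let \(S^\pm\) denote the \(\omega^i\)-weighted sum of the \(\varphi_f^\pm\)-values. Reindexing \(a\mapsto -a\) and using \(\omega^i(-a)=\varepsilon_i\omega^i(a)\) gives \(S^\pm=\pm\varepsilon_i S^\pm\). Hence the component whose sign differs from \(\varepsilon_i\) vanishes, and only \(S^{\varepsilon_i}\) survives. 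This proves \eqref{eq:Ani-modular-symbol-formula}.

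\emph{Step 3: the integral form.} By \eqref{complexmodsym} with \(r=\infty\) and \(s=a/p^{n+1}\), evaluating at \((X,Y)=(0,1)\) turns \((zX+Y)^{k-2}\) into \(1\). Thus
\[
\xi_f\bigl(\{\infty\}-\{a/p^{n+1}\}\bigr)\big|_{(0,1)}=2\pi i\int_{a/p^{n+1}}^{i\infty}f(z)\,dz .
\]
The cancellation argument of Step 2 applies verbatim to \(\xi_f\) and \(\xi_f^\pm\). So the \(\omega^i\)-weighted sum of \(\xi_f\)-values equals that of \(\xi_f^{\varepsilon_i}\)-values. Dividing by \(\Omega_f^{\varepsilon_i}\) then yields \eqref{eq:Ani-integral-formula}.

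The only delicate point is Step 2, specifically the sign conventions. One must check that the right action and the evaluation point \((0,1)\) produce exactly the factor \((-1)^{k-2}=1\). One must also check that the paper's \(\xi_f^\pm\) agree with the \(\pm1\)-eigencomponents for \(\iota\) as normalized above. If the paper's convention differs by a sign, the argument is unchanged up to swapping labels, and it is consistent with Proposition~\ref{prop:MT-interpolation}, where \(\Omega_f^{\chi(-1)}\) is paired with \(\chi\).
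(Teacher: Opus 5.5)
Your proposal is correct and follows essentially the same route as the paper. Both arguments unwind $A_{n,i}$ to an $\omega^i$-weighted sum of values of the full symbol $\varphi_f$. Both then use the reindexing $a\mapsto -a$ to show that the eigencomponent of sign different from $\varepsilon_i$ contributes zero, and both finish by passing from $\varphi_f^{\varepsilon_i}=\xi_f^{\varepsilon_i}/\Omega_f^{\varepsilon_i}$ back to $\xi_f$ and evaluating at $(X,Y)=(0,1)$ to get the integral.

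Your check that $(P\mid\iota)(0,1)=(-1)^{k-2}P(0,1)=P(0,1)$, using that $k$ is even, justifies the sign identity
\[
\varphi_f^{\delta}\bigl(\{\infty\}-\{-a/p^{n+1}\}\bigr)\big|_{(0,1)}=\delta\,\varphi_f^{\delta}\bigl(\{\infty\}-\{a/p^{n+1}\}\bigr)\big|_{(0,1)}.
\]
The paper uses this identity without comment.
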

\begin{proof}
By \eqref{thetaeqn} and \eqref{eq:def-Ani}, together with the
definition \eqref{eq:MT-element} of the Mazur--Tate element, we have
\[
        A_{n,i}
        =
        \sum_{a\in(\Z/p^{n+1}\Z)^\times}
        \omega^i(a\bmod p)\,
        \varphi_f
        \left(
        \{\infty\}
        -
        \left\{\frac{a}{p^{n+1}}\right\}
        \right)\Big|_{(X,Y)=(0,1)}.
\]
For brevity, put
\[
        D_a
        :=
        \{\infty\}
        -
        \left\{\frac{a}{p^{n+1}}\right\}.
\]
Since
\[
        \varphi_f=\varphi_f^++\varphi_f^-,
\]
it suffices to determine which of the two eigensymbols contributes to
the preceding sum.  For \(\delta\in\{\pm1\}\), set
\[
        T_\delta
        :=
        \sum_{a\in(\Z/p^{n+1}\Z)^\times}
        \omega^i(a\bmod p)\,
        \varphi_f^\delta(D_a)\Big|_{(X,Y)=(0,1)}.
\]
The set \((\Z/p^{n+1}\Z)^\times\) is stable under \(a\mapsto-a\).
Making this change of variables and using
\[
        \omega^i(-a\bmod p)
        =
        \varepsilon_i\omega^i(a\bmod p)
\]
and
\[
        \varphi_f^\delta(D_{-a})\Big|_{(X,Y)=(0,1)}
        =
        \delta\,
        \varphi_f^\delta(D_a)\Big|_{(X,Y)=(0,1)},
\]
we obtain
\[
        T_\delta
        =
        \varepsilon_i\delta\,T_\delta.
\]
Thus \(T_\delta=0\) whenever
\(\delta\neq\varepsilon_i\).  Consequently only the
\(\varepsilon_i\)-eigensymbol contributes, which proves
\eqref{eq:Ani-modular-symbol-formula}.
\par Finally,
\[
        \varphi_f^{\varepsilon_i}
        =
        \frac{\xi_f^{\varepsilon_i}}
             {\Omega_f^{\varepsilon_i}}.
\]
The same argument shows that, after summing with the weights
\(\omega^i(a\bmod p)\), the other eigensymbol component of \(\xi_f\)
vanishes. Hence
\[
\begin{aligned}
        A_{n,i}
        &=
        \frac{1}{\Omega_f^{\varepsilon_i}}
        \sum_{a\in(\Z/p^{n+1}\Z)^\times}
        \omega^i(a\bmod p)\,
        \xi_f(D_a)\Big|_{(X,Y)=(0,1)} \\
        &=
        \frac{2\pi i}{\Omega_f^{\varepsilon_i}}
        \sum_{a\in(\Z/p^{n+1}\Z)^\times}
        \omega^i(a\bmod p)
        \int_{a/p^{n+1}}^{i\infty}f(z)\,dz,
\end{aligned}
\]
by the definition of the complex modular symbol; cf. \eqref{complexmodsym}. This proves
\eqref{eq:Ani-integral-formula}.
\end{proof}

\par We also isolate the characters which are new at level \(n\).  For
\(n\geq1\), define
\begin{equation}
\label{eq:def-Sni}
        \mathcal S_{n,i}(f)
        :=
        \prod_{\substack{\psi\in\widehat{\Gamma_n}\\
                         \operatorname{ord}(\psi)=p^n}}
        \psi\bigl(\theta_{n,i}(f)\bigr).
\end{equation}
Only these exact-order characters correspond to primitive characters
of conductor \(p^{n+1}\), and hence only this product is needed for
the base-change \(L\)-value.
\par Put $d_n
        :=
        \#\mathcal C_{n+1}
        =
        p^{n-1}(p-1)^2$. Multiplication by the odd character \(\omega\) gives a
parity-reversing bijection of \(\mathcal C_{n+1}\).  Hence precisely
\(d_n/2\) characters in \(\mathcal C_{n+1}\) are even and \(d_n/2\)
are odd. Define the algebraically normalized contribution of the new
characters by
\begin{equation}
\label{eq:def-new-layer-algebraic}
        \mathscr L_n^{\operatorname{alg}}(f)
        :=
        \left(
        \prod_{\chi\in\mathcal C_{n+1}}
        \tau(\chi)
        \right)
        (\Omega_f^+)^{-d_n/2}
        (\Omega_f^-)^{-d_n/2}
        \prod_{\chi\in\mathcal C_{n+1}}
        \mathrm{L}(f,\chi,1).
\end{equation}
This quantity arises in relating the algebraic parts of $L_{K_n}(f,1)$ and $L_{K_{n+1}}(f,1)$; cf. Lemma \ref{lem:algebraic-layer-factorization} below.
\begin{lemma}
\label{lem:primitive-layer-identification}
For every \(n\geq1\),
\begin{equation}
\label{eq:primitive-layer-identification}
        \mathscr L_n^{\operatorname{alg}}(f)
        =
        \prod_{i=0}^{p-2}\mathcal S_{n,i}(f).
\end{equation}
Further, we have that $\mathscr L_n^{\operatorname{alg}}(f)\in\mathcal O$.
\end{lemma}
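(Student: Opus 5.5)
The argument is a reindexing of the characters in $\mathcal C_{n+1}$ followed by an application of the Mazur--Tate interpolation formula (Proposition~\ref{prop:MT-interpolation}) one character at a time.

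\textbf{Parametrization.} Since $G_n=\Delta\times\Gamma_n$, every character of $G_n$ factors uniquely as $\chi=\omega^i\psi$ with $0\le i\le p-2$ and $\psi\in\widehat{\Gamma_n}$. The conductor of $\omega^i\psi$ is $p^{n+1}$ exactly when $\psi$ has order $p^n$. This holds for every $i$, including $i=0$, because $n\ge1$ and a character of $\Gamma_n$ of exact order $p^n$ has conductor $p^{n+1}$. Hence $(i,\psi)\mapsto\omega^i\psi$ is a bijection from $\{0,\dots,p-2\}\times\{\psi:\operatorname{ord}\psi=p^n\}$ onto $\mathcal C_{n+1}$.

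\textbf{Character evaluation.} Directly from the definition of $\omega^i:\mathcal O[G_n]\to\mathcal O[\Gamma_n]$ we have
\[
\psi\bigl(\theta_{n,i}(f)\bigr)=(\omega^i\psi)\bigl(\vartheta_{n+1}(f)\bigr).
\]
Therefore
\[
\prod_{i=0}^{p-2}\mathcal S_{n,i}(f)=\prod_{\chi\in\mathcal C_{n+1}}\chi\bigl(\vartheta_{n+1}(f)\bigr).
\]

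\textbf{Interpolation.} Proposition~\ref{prop:MT-interpolation} applies to each $\chi\in\mathcal C_{n+1}$ and gives $\tau(\chi)\,\mathrm L(f,\overline\chi,1)/\Omega_f^{\chi(-1)}$. We then take the product over $\mathcal C_{n+1}$.

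\begin{itemize}
\item The set $\mathcal C_{n+1}$ is stable under $\chi\mapsto\overline\chi$, and $\chi(-1)=\overline\chi(-1)$. So $\prod_\chi\mathrm L(f,\overline\chi,1)=\prod_\chi\mathrm L(f,\chi,1)$, and we keep the Gauss sums $\tau(\chi)$ indexed by $\chi$ unchanged.
\item Exactly $d_n/2$ characters in $\mathcal C_{n+1}$ are even and $d_n/2$ are odd, as noted before the statement. So the periods contribute $(\Omega_f^+)^{-d_n/2}(\Omega_f^-)^{-d_n/2}$.
\end{itemize}

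Together these give exactly $\mathscr L_n^{\operatorname{alg}}(f)$, which proves \eqref{eq:primitive-layer-identification}. The only point needing care is that the paper's convention is $\overline\chi$ on one side and $\chi$ on the other. The conjugation bijection above handles this, since the Gauss-sum product is attached to the same index $\chi$ on both sides.

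\textbf{Integrality.} The element $\theta_{n,i}(f)$ lies in $\mathcal O[\Gamma_n]$, because $\varphi_f$ has $\mathcal O$-coefficients and $\omega^i$ is $\mathbb Z_p$-valued. Hence each $\psi(\theta_{n,i}(f))$ lies in $\mathcal O[\mu_{p^n}]$, which is integral. The product $\mathcal S_{n,i}(f)$ runs over a full $\operatorname{Gal}(\overline{\mathbb Q}_p/\mathcal K)$-orbit of characters: the Galois action permutes the primitive $p^n$-th roots of unity $\psi(\gamma)$, and it fixes the coefficients of $\theta_{n,i}(f)$, which lie in $\mathcal O$. So $\mathcal S_{n,i}(f)$ is Galois-invariant and integral, hence lies in $\mathcal O$. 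Equivalently, $\mathcal S_{n,i}(f)$ is a norm, namely the determinant of multiplication by $\theta_{n,i}(f)$ on $\mathcal O[\Gamma_n]$ divided by the same determinant on $\mathcal O[\Gamma_{n-1}]$. It follows that $\mathscr L_n^{\operatorname{alg}}(f)\in\mathcal O$.
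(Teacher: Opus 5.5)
Your proof is correct and follows the paper's argument essentially step for step: the factorization $\chi=\omega^i\psi$, the identification $\psi(\theta_{n,i}(f))=(\omega^i\psi)(\vartheta_{n+1}(f))$, the Mazur--Tate interpolation formula, and the reindexing by conjugation together with the parity count are exactly what the paper does. For integrality the paper writes $\mathcal S_{n,i}(f)=\operatorname{Res}\bigl(\Phi_{p^n}(X),F_{n,i}(X)\bigr)$ with $F_{n,i}\in\mathcal O[X]$. This is the same fact as your Galois-invariance argument, where the primitive characters form a Galois-stable set but not necessarily a single orbit. It is also the correct version of your closing remark: the determinant of multiplication on $\mathcal O[X]/(\Phi_{p^n}(X))$ is always defined, whereas your quotient of the level-$n$ and level-$(n-1)$ determinants is undefined when the latter vanishes.
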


\begin{proof}
Every character of \(G_n=\Delta\times\Gamma_n\) has a unique
decomposition $\chi=\omega^i\psi$, $0\leq i\leq p-2$ and $\psi\in\widehat{\Gamma_n}$. Such a character has conductor exactly \(p^{n+1}\) if and only if
\(\psi\) has exact order \(p^n\).  Thus
\[
        \mathcal C_{n+1}
        =
        \{
        \omega^i\psi:
        0\leq i\leq p-2,\
        \operatorname{ord}(\psi)=p^n
        \}.
\]

For such a pair \((i,\psi)\), evaluation of
\(\theta_{n,i}(f)\) is the same as evaluation of
\(\vartheta_{n+1}(f)\) at \(\omega^i\psi\).  Therefore
\eqref{interpolformula} gives
\[
        \psi(\theta_{n,i}(f))
        =
        \tau(\omega^i\psi)
        \frac{\mathrm{L}(f,\overline{\omega^i\psi},1)}
             {\Omega_f^{(\omega^i\psi)(-1)}}.
\]
Multiplying over all \(i\) and all exact-order \(\psi\) gives
\[
        \prod_{i=0}^{p-2}\mathcal S_{n,i}(f)
        =
        \prod_{\chi\in\mathcal C_{n+1}}
        \tau(\chi)
        \frac{\mathrm{L}(f,\overline\chi,1)}
             {\Omega_f^{\chi(-1)}}.
\]
Since \(\mathcal C_{n+1}\) is stable under inversion,
\[
        \prod_{\chi\in\mathcal C_{n+1}}
        \mathrm{L}(f,\overline\chi,1)
        =
        \prod_{\chi\in\mathcal C_{n+1}}
        \mathrm{L}(f,\chi,1).
\]
The parity count noted above now gives
\eqref{eq:primitive-layer-identification}.

To see integrality directly, choose a generator \(\gamma_n\) of
\(\Gamma_n\) and write
\[
        \theta_{n,i}(f)
        =
        \sum_{j=0}^{p^n-1}c_{j,n,i}\gamma_n^j
        \quad\text{and}\quad
        F_{n,i}(X)
        :=
        \sum_{j=0}^{p^n-1}c_{j,n,i}X^j
        \in\mathcal O[X].
\]
Then
\[
        \mathcal S_{n,i}(f)
        =
        \prod_{\operatorname{ord}(\zeta)=p^n}
        F_{n,i}(\zeta)
        =
        \operatorname{Res}
        \bigl(
        \Phi_{p^n}(X),F_{n,i}(X)
        \bigr).
\]
Since both polynomials lie in \(\mathcal O[X]\), the resultant belongs
to \(\mathcal O\).  Thus each \(\mathcal S_{n,i}(f)\), and hence their
product, lies in \(\mathcal O\).
\end{proof}

\begin{lemma}
\label{lem:primitive-layer-reduction}
For every \(n\geq1\),
\begin{equation}
\label{eq:primitive-layer-algebraic-congruence}
        \mathscr L_n^{\operatorname{alg}}(f)
        \equiv
        A_n(f)^{\varphi(p^n)}
        \pmod{p\mathcal O},
\end{equation}
(cf. \eqref{eq:def-Ani} for the definition of $A_n(f)$).
\end{lemma}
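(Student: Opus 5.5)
By Lemma~\ref{lem:primitive-layer-identification}, it suffices to show that, for each $0\leq i\leq p-2$,
\[
        \mathcal S_{n,i}(f)\equiv A_{n,i}^{\varphi(p^n)}\pmod{p\mathcal O},
\]
and then multiply these congruences over $i$. Since $A_n(f)=\prod_i A_{n,i}$, the product of the right-hand sides is $A_n(f)^{\varphi(p^n)}$.

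We work with the polynomial description from the proof of Lemma~\ref{lem:primitive-layer-identification}. Fix a generator $\gamma_n$ of $\Gamma_n$ and write
\[
        \theta_{n,i}(f)=\sum_j c_{j,n,i}\gamma_n^j,\qquad F_{n,i}(X)=\sum_j c_{j,n,i}X^j .
\]
Then
\[
        \mathcal S_{n,i}(f)=\prod_{\operatorname{ord}(\zeta)=p^n}F_{n,i}(\zeta),
\]
and by definition of the augmentation, $A_{n,i}=F_{n,i}(1)$.

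The key step is to reduce modulo a uniformizer in the cyclotomic extension. Work in $\mathcal O'=\mathcal O[\mu_{p^n}]$, and let $\lambda$ be a prime of $\mathcal O'$ above $\mathfrak m$. Every primitive $p^n$-th root of unity satisfies $\zeta\equiv1\pmod{\lambda}$, because $\zeta-1$ divides $p$ in $\mathbb Z[\zeta]$ and hence is a nonunit. Therefore $F_{n,i}(\zeta)\equiv F_{n,i}(1)\pmod{(\zeta-1)}$. Taking the product over the $\varphi(p^n)$ primitive roots gives
\[
        \mathcal S_{n,i}(f)\equiv A_{n,i}^{\varphi(p^n)}\pmod{(\zeta-1)\mathcal O'}.
\]
Both sides lie in $\mathcal O$. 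By the resultant argument of Lemma~\ref{lem:primitive-layer-identification}, $\mathcal S_{n,i}(f)\in\mathcal O$.

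The main obstacle is to descend this congruence from the ideal $(\zeta-1)\mathcal O'$ of $\mathcal O'$ to the stated modulus $p\mathcal O$. The ideal $(\zeta-1)\cap\mathcal O$ is only $\mathfrak m$, in general not $p\mathcal O$, unless $\mathcal O$ is unramified over $\mathbb Z_p$.

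To sharpen the congruence, I would avoid reducing $\zeta$ to $1$ and instead use the Frobenius/norm structure, mimicking Lemma~\ref{lem:cyclotomic-norm-unit}. Write $F_{n,i}(X)=F_{n,i}(1)+(X-1)Q(X)$. Then
\[
        \mathcal S_{n,i}(f)=\prod_{\zeta}\bigl(A_{n,i}+(\zeta-1)Q(\zeta)\bigr).
\]
Expand this product in elementary symmetric functions of the quantities $(\zeta-1)Q(\zeta)$. Each elementary symmetric function of degree $r\geq1$ is Galois invariant over $\mathbb Q_p(\mu_{p^n})/\mathbb Q_p$, so it lies in $\mathcal O$. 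It is also divisible by $(\zeta-1)^r$, since each term is a product of $r$ factors of the form $(\zeta-1)Q(\zeta)$. For $r<\varphi(p^n)$, however, such an invariant need not be divisible by $p$ itself.

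I would therefore instead argue via the trace. The $r$-th symmetric function is a sum over Galois orbits of $r$-subsets. I would show that each orbit sum is a trace from a subfield and pick up a factor of $p$ from the trace, using that $\operatorname{Tr}$ of $\mathbb Z_p[\zeta]$ into a subring lands in $p$ times the subring whenever the orbit has size divisible by $p$. The full-set orbit, $r=\varphi(p^n)$, needs separate handling: its contribution is the norm $\prod_\zeta(\zeta-1)Q(\zeta)=N(\zeta-1)\cdot N(Q)=p\cdot N(Q)$, which is divisible by $p$.

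The delicate point is orbits of size prime to $p$, which come from the $\Delta$-part of the Galois group. If this step fails in general, I would fall back to the case $\kappa=\mathbb F_p$ with $\mathcal O$ unramified, where $\mathfrak m=p\mathcal O$ and the first argument already suffices.
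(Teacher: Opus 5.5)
\textbf{Verdict: there is a genuine gap.} Your reduction to the componentwise statement $\mathcal S_{n,i}(f)\equiv A_{n,i}^{\varphi(p^n)}$ is fine, and so is your first argument. But that argument only shows that the difference lies in $(\zeta-1)\mathcal O[\mu_{p^n}]\cap\mathcal O$. The step that should upgrade this ideal to $p\mathcal O$ is never carried out, and falling back to unramified $\mathcal O$ proves a different statement. As it stands you have the lemma modulo $\mathfrak m$. That suffices for the later residue-field applications, but not for the lemma as stated, nor for Proposition~\ref{prop:general-algebraic-L-congruence}.

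The obstruction is also misdiagnosed: ramification of $\mathcal O$ is not the problem as such. If $\mathcal K\cap\mathbb Q_p(\mu_{p^n})=\mathbb Q_p$, then $\mathcal O[\zeta]\cong\mathcal O[X]/(\Phi_{p^n})$. Hence $\mathcal O[\zeta]/(\zeta-1)\cong\mathcal O/\Phi_{p^n}(1)\mathcal O=\mathcal O/p\mathcal O$, and your first argument already gives the congruence modulo $p\mathcal O$. The loss occurs only when $\mathcal K$ meets $\mathbb Q_p(\mu_{p^n})$. For example, if $\mu_p\subset\mathcal K$ and $n=1$, then $(\zeta-1)\mathcal O\supsetneq p\mathcal O$.

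In exactly that situation your orbit/trace plan has nothing to work with. The $e_r$ involve coefficients in $\mathcal O$, so the group acting is $\operatorname{Gal}(\mathcal K(\mu_{p^n})/\mathcal K)$. This is then a proper subgroup of $(\mathbb Z/p^n\mathbb Z)^\times$, and it is trivial when $\mu_{p^n}\subset\mathcal K$. All orbits can then be singletons, and divisibility by $(\zeta-1)^r$ only gives a valuation bound, which cannot produce the factor $p$. Note also that the $e_r$ with $r\geq1$ are in fact divisible by $p$; that is precisely what has to be shown, not something that may fail.

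The missing idea is to keep the $\mathcal O$-coefficients separate from the roots of unity, i.e.\ to prove an integral polynomial identity before specializing. The paper does this through $\mathcal S_{n,i}(f)=\operatorname{Res}(\Phi_{p^n},F_{n,i})$:
\begin{itemize}
\item The resultant is a $\mathbb Z$-polynomial in the coefficients of the monic $\Phi_{p^n}$ and of $F_{n,i}$.
\item Since $\Phi_{p^n}(X)\equiv(X-1)^{\varphi(p^n)}\pmod{p\mathbb Z[X]}$, replacing $\Phi_{p^n}$ by $(X-1)^{\varphi(p^n)}$ changes the resultant by an element of $p\mathcal O$.
\item Finally, $\operatorname{Res}\bigl((X-1)^{\varphi(p^n)},F_{n,i}\bigr)=F_{n,i}(1)^{\varphi(p^n)}=A_{n,i}^{\varphi(p^n)}$.
\end{itemize}

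Your own expansion can be repaired the same way.
\begin{itemize}
\item Take indeterminates $T,q_0,\dots,q_d$ and put $Q=\sum_j q_jX^j$.
\item The product $\prod_{\zeta}\bigl(T+(\zeta-1)Q(\zeta)\bigr)$ over the primitive $p^n$-th roots of unity is Galois-invariant, so it lies in $\mathbb Z[T,q_j]$.
\item It is $\equiv T^{\varphi(p^n)}$ modulo $(1-\zeta)\mathbb Z[\zeta][T,q_j]$.
\item Since $\mathbb Z[\zeta]/(1-\zeta)=\mathbb F_p$, this ideal meets $\mathbb Z[T,q_j]$ in $p\,\mathbb Z[T,q_j]$.
\item Specializing $T\mapsto A_{n,i}$ and the $q_j$ to the coefficients of your $Q$ gives the congruence modulo $p\mathcal O$. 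No trace or orbit analysis is needed.
\end{itemize}
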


\begin{proof}
With the notation in the preceding proof,
\[
        \mathcal S_{n,i}(f)
        =
        \operatorname{Res}
        \bigl(
        \Phi_{p^n}(X),F_{n,i}(X)
        \bigr).
\]
Now
\[
        \Phi_{p^n}(X)
        =
        \frac{X^{p^n}-1}
             {X^{p^{n-1}}-1}.
\]
Over \(\F_p\),
\[
        X^{p^r}-1=(X-1)^{p^r},
\]
and therefore
\begin{equation}
\label{eq:cyclotomic-polynomial-reduction}
        \Phi_{p^n}(X)
        \equiv
        (X-1)^{p^n-p^{n-1}}
        =
        (X-1)^{\varphi(p^n)}
        \pmod p.
\end{equation}
Reducing the resultant modulo \(p\) consequently gives
\[
\begin{aligned}
        \mathcal S_{n,i}(f)
        &\equiv
        \operatorname{Res}
        \left(
        (X-1)^{\varphi(p^n)},
        F_{n,i}(X)
        \right)
        \pmod{p\mathcal O}\\
        &=
        F_{n,i}(1)^{\varphi(p^n)}
        \pmod{p\mathcal O}.
\end{aligned}
\]
But evaluation at \(X=1\) is augmentation, so
\[
        F_{n,i}(1)=A_{n,i}.
\]
Hence
\[
        \mathcal S_{n,i}(f)
        \equiv
        A_{n,i}^{\varphi(p^n)}
        \pmod{p\mathcal O}.
\]
Multiplication over \(0\leq i\leq p-2\), followed by
Lemma~\ref{lem:primitive-layer-identification}, gives
\eqref{eq:primitive-layer-algebraic-congruence}.
\end{proof}

\subsection{Algebraic base-change \(L\)-values}
\label{subsec:alg-L-stability}

\par We now study the algebraic $L$-values with respect to base change. Let $\mathcal X_n:=\widehat{G_n}$ and for \(\chi\in\mathcal X_n\), let \(\chi^\circ\) denote the primitive
Dirichlet character inducing \(\chi\).  Thus
\[
        \mathrm{L}(f,\chi,s)=\mathrm{L}(f,\chi^\circ,s).
\]
For the trivial character, put $\tau(\mathbf1):=1$. Finally, write
\[
        \mathcal X_n^\pm
        :=
        \{\chi\in\mathcal X_n:\chi(-1)=\pm1\}.
\]

\begin{definition}
\label{def:algebraic-base-change}
The algebraic special value of \(f\) over \(K_n\) is
\begin{equation}
\label{eq:def-algebraic-base-change}
        \mathrm{L}_{K_n}^{\operatorname{alg}}(f,1)
        :=
        \left(
        \prod_{\chi\in\mathcal X_n}
        \tau(\chi^\circ)
        \right)
        (\Omega_f^+)^{-\#\mathcal X_n^+}
        (\Omega_f^-)^{-\#\mathcal X_n^-}
        \mathrm{L}_{K_n}(f,1).
\end{equation}
\end{definition}
\par The normalization is compatible with passage from one cyclotomic
layer to the next.

\begin{lemma}
\label{lem:algebraic-layer-factorization}
For every \(n\geq1\),
\begin{equation}
\label{eq:algebraic-layer-factorization}
        \mathrm{L}_{K_n}^{\operatorname{alg}}(f,1)
        =
        \mathrm{L}_{K_{n-1}}^{\operatorname{alg}}(f,1)
        \mathscr L_n^{\operatorname{alg}}(f).
\end{equation}
\end{lemma}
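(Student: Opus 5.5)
The plan is to unwind Definition~\ref{def:algebraic-base-change} at levels \(n\) and \(n-1\) and compare the three ingredients separately: the \(L\)-value, the Gauss sums, and the period exponents. For the \(L\)-value, Proposition~\ref{prop:successive-base-change-factorization} gives
\[
        \mathrm{L}_{K_n}(f,1)=\mathrm{L}_{K_{n-1}}(f,1)\prod_{\chi\in\mathcal C_{n+1}}\mathrm{L}(f,\chi,1).
\]
So it remains to show that the remaining normalizing factors split in the same way as the characters.

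The character set splits as
\[
        \mathcal X_n=\operatorname{Inf}(\mathcal X_{n-1})\sqcup\mathcal C_{n+1},
\]
where the first piece consists of the characters factoring through \(G_{n-1}\), identified with \(\mathcal X_{n-1}\) by inflation. Inflation does not change the associated primitive character \(\chi^\circ\), so the Gauss sums \(\tau(\chi^\circ)\) for inflated characters are exactly those in the level \(n-1\) product. This includes the trivial character, where \(\tau(\mathbf 1)=1\) at both levels. For \(\chi\in\mathcal C_{n+1}\), the character is already primitive, so \(\chi^\circ=\chi\). Hence
\[
        \prod_{\chi\in\mathcal X_n}\tau(\chi^\circ)=\Bigl(\prod_{\chi\in\mathcal X_{n-1}}\tau(\chi^\circ)\Bigr)\prod_{\chi\in\mathcal C_{n+1}}\tau(\chi).
\]

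For the periods, inflation preserves parity, so
\[
        \#\mathcal X_n^{\pm}=\#\mathcal X_{n-1}^{\pm}+\#(\mathcal C_{n+1}\cap\mathcal X_n^{\pm}).
\]
By the parity count recorded before \eqref{eq:def-new-layer-algebraic}, multiplication by the odd character \(\omega\) is a parity-reversing bijection of \(\mathcal C_{n+1}\). Therefore each of the two intersections has size \(d_n/2\). Multiplying the three splittings together and comparing with \eqref{eq:def-new-layer-algebraic} gives \eqref{eq:algebraic-layer-factorization}.

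There is no real obstacle here. The only point needing care is that \(\tau(\chi^\circ)\) is unchanged under inflation. This holds because \(\chi^\circ\) is defined intrinsically as the primitive character inducing \(\chi\), and inflation does not alter it.
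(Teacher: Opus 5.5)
Your proposal is correct and follows the same route as the paper: split $\mathcal X_n$ into the characters inflated from $G_{n-1}$ and the new primitive characters in $\mathcal C_{n+1}$, then factor the Gauss-sum product, the period exponents, and (via Proposition~\ref{prop:successive-base-change-factorization}) the $L$-value accordingly. You also make explicit two points the paper leaves implicit: inflation does not change $\chi^\circ$, and the parity-reversing bijection given by multiplication by $\omega$ yields exactly $d_n/2$ new characters of each sign.
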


\begin{proof}
The characters of \(G_n\) are the disjoint union of those which factor
through \(G_{n-1}\) and the characters in
\(\mathcal C_{n+1}\).  Therefore both the Gauss product and the period
product in \eqref{eq:def-algebraic-base-change} split into the old
factor and the primitive new factor.  Proposition~
\ref{prop:successive-base-change-factorization} gives the same
factorization for the complex \(L\)-value.  Combining the three
factorizations gives
\eqref{eq:algebraic-layer-factorization}.
\end{proof}
\begin{proposition}
\label{prop:algebraic-L-integrality}
For every \(n\geq0\), one has that $\mathrm{L}_{K_n}^{\operatorname{alg}}(f,1)
        \in\mathcal O$.
\end{proposition}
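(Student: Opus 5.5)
The plan is induction on \(n\), using the layer factorization of Lemma~\ref{lem:algebraic-layer-factorization}. For \(n\geq1\) we have
\[
        \mathrm{L}_{K_n}^{\operatorname{alg}}(f,1)
        =
        \mathrm{L}_{K_{n-1}}^{\operatorname{alg}}(f,1)\,
        \mathscr L_n^{\operatorname{alg}}(f),
\]
and Lemma~\ref{lem:primitive-layer-identification} shows that \(\mathscr L_n^{\operatorname{alg}}(f)\in\mathcal O\), being a product of resultants of polynomials in \(\mathcal O[X]\). Hence integrality at level \(n-1\) implies integrality at level \(n\). The whole statement therefore reduces to the base case \(n=0\), that is, \(K_0=\mathbb Q(\mu_p)\) and \(\mathcal X_0=\widehat{\Delta}=\{\omega^i:0\leq i\leq p-2\}\).

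For the base case I would split \(\mathcal X_0\) into the trivial character and the \(p-2\) nontrivial powers \(\omega^i\). Each nontrivial \(\omega^i\) is primitive of conductor \(p\). So Proposition~\ref{prop:MT-interpolation} with \(m=1\) gives
\[
        \omega^i\bigl(\vartheta_1(f)\bigr)
        =
        \tau(\omega^i)\,
        \frac{\mathrm L(f,\overline{\omega^i},1)}{\Omega_f^{\omega^i(-1)}}
        \in\mathcal O,
\]
where integrality holds because \(\vartheta_1(f)\in\mathcal O[\Gal(\mathbb Q(\mu_p)/\mathbb Q)]\) and the values of \(\omega^i\) lie in \(\mathbb Z_p\). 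Here \(\omega^i(\vartheta_1(f))=A_{0,i}\) by \eqref{eq:def-Ani}, since \(\Gamma_0\) is trivial. Taking the product over \(i\neq0\), replacing \(\overline{\omega^i}\) by \(\omega^{-i}\) via the inversion-stability of the set of nontrivial characters, and matching the parity counts of periods, identifies the nontrivial part of \(\mathrm{L}_{K_0}^{\operatorname{alg}}(f,1)\) with \(\prod_{i\neq0}A_{0,i}\in\mathcal O\).

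It remains to handle the trivial character, which contributes \(\mathrm L(f,1)/\Omega_f^+\) with \(\tau(\mathbf 1)=1\). This is the step I expect to be the main obstacle. The cleanest route is Birch's formula at conductor \(1\): the symbol \(\varphi_f^+(\{\infty\}-\{0\})|_{(X,Y)=(0,1)}\) lies in \(\mathcal O\) by the integrality of \(\varphi_f^+\), and by \eqref{complexmodsym} it equals \(2\pi i\int_0^{i\infty}f(z)\,dz/\Omega_f^+\). This is \(\mathrm L(f,1)/\Omega_f^+\) up to the standard factor \(\pm1\) (and possibly a factor \(-1/(2\pi i)\), depending on sign conventions), which is a unit in \(\mathcal O\). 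To avoid convention issues, an alternative is to apply augmentation-style bookkeeping to \(A_{0,0}=\operatorname{aug}(\theta_{0,0})\). By the distribution relation at the bottom level, this equals \((a_p-1-p^{k-2})\cdot\mathrm L(f,1)/\Omega_f^+\) up to sign, which is the analogue of \eqref{eq:MT-distribution} with the level-zero element. That factor need not be a unit in general, so I would rely on the direct conductor-\(1\) modular symbol computation instead.

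Combining the two contributions gives \(\mathrm{L}_{K_0}^{\operatorname{alg}}(f,1)\in\mathcal O\), and the induction then yields the claim for all \(n\geq0\).
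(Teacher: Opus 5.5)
Your proof is correct and follows the paper's argument essentially step for step: you induct through the layer factorization of Lemma~\ref{lem:algebraic-layer-factorization}, using \(\mathscr L_n^{\operatorname{alg}}(f)\in\mathcal O\) from the resultant description, and you split the base case into the nontrivial \(\omega^i\), handled by Mazur--Tate interpolation with \(A_{0,i}=\omega^i(\vartheta_1(f))\in\mathcal O\), and the trivial character, handled by integrality of \(\varphi_f^+(\{\infty\}-\{0\})\); you are also right to discard the \(A_{0,0}\) route, since \(1-a_p+p^{k-2}\) need not be a unit. One small correction: with the normalization \eqref{complexmodsym} the conversion factor is exactly \(-1\), because \(2\pi i\int_0^{i\infty}f(z)\,dz=-2\pi\int_0^\infty f(it)\,dt=-\mathrm{L}(f,1)\), so drop the hedge about a possible factor \(-1/(2\pi i)\); such a factor would not be a unit in \(\mathcal O\), and the argument would then fail.
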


\begin{proof}
We first prove the result for $n=0$. By \eqref{eq:def-algebraic-base-change} and the Artin factorization
\eqref{eq:Artin-cyclotomic-modular}, we have
\[
        \mathrm{L}_{K_0}^{\operatorname{alg}}(f,1)
        =
        \frac{\mathrm{L}(f,1)}{\Omega_f^+}
        \prod_{i=1}^{p-2}
        \tau(\omega^i)
        \frac{\mathrm{L}(f,\overline{\omega^i},1)}
             {\Omega_f^{\omega^i(-1)}}.
\]
We recall from \eqref{eq:def-Ani} that \[A_{n,i}
        :=
        \operatorname{aug}_n(\theta_{n,i}(f)).\] For \(1\leq i\leq p-2\), the
character \(\omega^i\) is a nontrivial primitive character of conductor
\(p\). Since \(\Gamma_0\) is trivial, we have that
\[
        \theta_{0,i}(f)
        =
        \omega^i(\vartheta_1(f))
        =
        A_{0,i}.
\]
Thus, applying the Mazur--Tate interpolation formula
\eqref{interpolformula} with \(\chi=\omega^i\), we obtain
\begin{equation}
\label{eq:A0i-interpolation}
        A_{0,i}
        =
        \tau(\omega^i)
        \frac{\mathrm{L}(f,\overline{\omega^i},1)}
             {\Omega_f^{\omega^i(-1)}}
        \in\mathcal O.
\end{equation}

It remains to treat the trivial character. Put
\(D_0=\{\infty\}-\{0\}\). By the choice of cohomological period,
\(\varphi_f^+(D_0)\in V_{k-2}(\mathcal O)\). Specializing at
\((X,Y)=(0,1)\) and using the definition of \(\xi_f\) gives, up to the
fixed sign convention for the modular symbol,
\[
        \varphi_f^+(D_0)\big|_{(0,1)}
        =
        \pm\frac{\mathrm{L}(f,1)}{\Omega_f^+}.
\]
Hence
\begin{equation}
\label{eq:trivial-algebraic-integrality}
        \frac{\mathrm{L}(f,1)}{\Omega_f^+}
        \in\mathcal O.
\end{equation}
Together with \eqref{eq:A0i-interpolation}, this proves
\(\mathrm{L}_{K_0}^{\operatorname{alg}}(f,1)\in\mathcal O\).

For \(n\geq1\), Lemma~\ref{lem:primitive-layer-identification} gives
\(\mathscr L_n^{\operatorname{alg}}(f)\in\mathcal O\). The result
therefore follows inductively from
\eqref{eq:algebraic-layer-factorization}.
\end{proof}

\begin{proposition}
\label{prop:general-algebraic-L-congruence}
For every \(n\geq1\),
\begin{equation}
\label{eq:general-algebraic-L-congruence}
        \mathrm{L}_{K_n}^{\operatorname{alg}}(f,1)
        \equiv
        A_n(f)^{\varphi(p^n)}
        \mathrm{L}_{K_{n-1}}^{\operatorname{alg}}(f,1)
        \pmod{p\mathcal O}.
\end{equation}
\end{proposition}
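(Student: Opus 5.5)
The congruence follows by combining the layer factorization of Lemma~\ref{lem:algebraic-layer-factorization} with the mod-$p$ reduction of the new-layer factor in Lemma~\ref{lem:primitive-layer-reduction}. The plan is as follows.

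First, for $n\geq1$, Lemma~\ref{lem:algebraic-layer-factorization} gives the exact identity
\[
        \mathrm{L}_{K_n}^{\operatorname{alg}}(f,1)
        =
        \mathrm{L}_{K_{n-1}}^{\operatorname{alg}}(f,1)\,
        \mathscr L_n^{\operatorname{alg}}(f).
\]
Next, Lemma~\ref{lem:primitive-layer-reduction} gives
\[
        \mathscr L_n^{\operatorname{alg}}(f)
        \equiv A_n(f)^{\varphi(p^n)}\pmod{p\mathcal O}.
\]
Write this as $\mathscr L_n^{\operatorname{alg}}(f)=A_n(f)^{\varphi(p^n)}+p\,x$ with $x\in\mathcal O$. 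This requires both sides to lie in $\mathcal O$. The left side does by Lemma~\ref{lem:primitive-layer-identification}. For the right side, each $A_{n,i}=\operatorname{aug}_n(\theta_{n,i}(f))$ lies in $\mathcal O$, since $\theta_{n,i}(f)\in\mathcal O[\Gamma_n]$ and $\omega^i$ takes values in $\mathbb Z_p^\times$.

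Multiplying by $\mathrm{L}_{K_{n-1}}^{\operatorname{alg}}(f,1)$ then gives
\[
        \mathrm{L}_{K_n}^{\operatorname{alg}}(f,1)
        = A_n(f)^{\varphi(p^n)}\,\mathrm{L}_{K_{n-1}}^{\operatorname{alg}}(f,1)
        + p\,x\,\mathrm{L}_{K_{n-1}}^{\operatorname{alg}}(f,1).
\]
The only remaining point is that the error term lies in $p\mathcal O$. This holds because $\mathrm{L}_{K_{n-1}}^{\operatorname{alg}}(f,1)\in\mathcal O$ by Proposition~\ref{prop:algebraic-L-integrality}, applied at level $n-1\geq0$. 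Since $p\mathcal O$ is an ideal, the error term lies in $p\mathcal O$, which proves \eqref{eq:general-algebraic-L-congruence}.

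There is no serious obstacle; the one thing to check is integrality of the multiplier. Without Proposition~\ref{prop:algebraic-L-integrality}, a congruence modulo $p\mathcal O$ multiplied by a non-integral element would not remain a congruence. With that integrality in hand, the argument is a one-line combination of the two cited lemmas.
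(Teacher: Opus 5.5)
Your proposal is correct and is the same argument as the paper's: you combine the exact layer factorization $\mathrm{L}_{K_n}^{\operatorname{alg}}(f,1)=\mathrm{L}_{K_{n-1}}^{\operatorname{alg}}(f,1)\,\mathscr L_n^{\operatorname{alg}}(f)$ with the congruence $\mathscr L_n^{\operatorname{alg}}(f)\equiv A_n(f)^{\varphi(p^n)}\pmod{p\mathcal O}$, and then multiply by the integral value $\mathrm{L}_{K_{n-1}}^{\operatorname{alg}}(f,1)\in\mathcal O$. Your explicit check that $A_n(f)\in\mathcal O$ is a small point the paper leaves implicit.
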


\begin{proof}
By Lemma~\ref{lem:primitive-layer-reduction},
\[
        \mathscr L_n^{\operatorname{alg}}(f)
        \equiv
        A_n(f)^{\varphi(p^n)}
        \pmod{p\mathcal O}.
\]
Multiplying by
\(\mathrm{L}_{K_{n-1}}^{\operatorname{alg}}(f,1)\in\mathcal O\) and
using \eqref{eq:algebraic-layer-factorization} proves the assertion.
\end{proof}

\subsection{Relation with $\mathrm{L}_{K_0}^{\operatorname{alg}}(f,1)$}

\par We now relate \(A_n(f)\) to $\mathrm{L}_{K_0}^{\operatorname{alg}}(f,1)$.
This is the step which eventually allows the augmentation factor
$A_n(f)^{\varphi(p^n)}$ to be eliminated when \(\kappa=\F_p\).

\begin{lemma}
\label{lem:A0-bottom}
One has
\begin{equation}
\label{eq:A0-exact-bottom-value}
        A_0(f)
        =
        \bigl(1-a_p+p^{k-2}\bigr)
        \mathrm{L}_{K_0}^{\operatorname{alg}}(f,1).
\end{equation}
Consequently,
\begin{equation}
\label{eq:A0-mod-m-bottom-value}
        A_0(f)
        \equiv
        (1-a_p)
        \mathrm{L}_{K_0}^{\operatorname{alg}}(f,1)
        \pmod{\mathfrak m}.
\end{equation}
\end{lemma}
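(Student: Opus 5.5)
The plan is to compute $A_0(f)=\prod_{i=0}^{p-2}A_{0,i}$ factor by factor. The factors with $i\neq0$ are already known, so the only new input concerns the trivial Teichm\"uller branch $i=0$, and that will come from the Hecke relation $T_p\varphi_f=a_p\varphi_f$ evaluated on the divisor $D_0=\{\infty\}-\{0\}$.

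\emph{The factors $1\leq i\leq p-2$.} Since $\Gamma_0$ is trivial, $\theta_{0,i}(f)=A_{0,i}$. By \eqref{eq:A0i-interpolation},
\[
A_{0,i}=\tau(\omega^i)\,\frac{\mathrm{L}(f,\overline{\omega^i},1)}{\Omega_f^{\omega^i(-1)}}.
\]
Comparing with the factorization of $\mathrm{L}_{K_0}^{\operatorname{alg}}(f,1)$ written out at the start of the proof of Proposition~\ref{prop:algebraic-L-integrality}, we get
\[
\mathrm{L}_{K_0}^{\operatorname{alg}}(f,1)=\frac{\mathrm{L}(f,1)}{\Omega_f^+}\prod_{i=1}^{p-2}A_{0,i}.
\]
Hence it suffices to prove
\[
A_{0,0}=(1-a_p+p^{k-2})\,\frac{\mathrm{L}(f,1)}{\Omega_f^+},
\]
with the sign normalized consistently with that proof.

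\emph{The factor $i=0$.} By Lemma~\ref{lem:Ani-integral-formula},
\[
A_{0,0}=\sum_{a=1}^{p-1}\varphi_f^+\bigl(\{\infty\}-\{a/p\}\bigr)\big|_{(0,1)}.
\]
We use the Hecke operator $T_p$ for $p\nmid N$, given by the coset representatives $\gamma_a=\left(\begin{smallmatrix}1&a\\0&p\end{smallmatrix}\right)$ for $0\le a\le p-1$ together with $\gamma_\infty=\left(\begin{smallmatrix}p&0\\0&1\end{smallmatrix}\right)$. Under the identification \eqref{basic mod symbol iso} we have $(\varphi\mid T_p)(D)=\sum_\gamma\varphi(\gamma D)\mid\gamma$, and $\varphi_f^+\mid T_p=a_p\varphi_f^+$. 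Apply this to $D_0$.

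\begin{itemize}
\item For $\gamma_a$: we have $\gamma_aD_0=\{\infty\}-\{a/p\}$ and $\gamma_a^*=\left(\begin{smallmatrix}p&-a\\0&1\end{smallmatrix}\right)$. So $P\mid\gamma_a=P(pX,-aX+Y)$, whose value at $(0,1)$ is $P(0,1)$.
\item For $\gamma_\infty$: $D_0$ is fixed, and $P\mid\gamma_\infty=P(X,pY)$, which evaluates at $(0,1)$ to $p^{k-2}P(0,1)$ because $P$ is homogeneous of degree $k-2$.
\end{itemize}

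Specializing at $(0,1)$ therefore gives
\[
a_p\,\varphi_f^+(D_0)|_{(0,1)}=\varphi_f^+(D_0)|_{(0,1)}+A_{0,0}+p^{k-2}\varphi_f^+(D_0)|_{(0,1)},
\]
where the term $a=0$ contributes $\varphi_f^+(D_0)$. Hence
\[
A_{0,0}=-(1-a_p+p^{k-2})\,\varphi_f^+(D_0)|_{(0,1)}.
\]
Combined with $\varphi_f^+(D_0)|_{(0,1)}=\pm \mathrm{L}(f,1)/\Omega_f^+$ from the proof of Proposition~\ref{prop:algebraic-L-integrality}, this yields \eqref{eq:A0-exact-bottom-value}, with the sign fixed by the same convention. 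Multiplying over $i$ completes the identity.

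\emph{The congruence.} Since $k>2$, we have $p^{k-2}\in p\mathcal O\subseteq\mathfrak m$, and $\mathrm{L}_{K_0}^{\operatorname{alg}}(f,1)\in\mathcal O$ by Proposition~\ref{prop:algebraic-L-integrality}. Reducing \eqref{eq:A0-exact-bottom-value} modulo $\mathfrak m$ then gives \eqref{eq:A0-mod-m-bottom-value}.

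The main obstacle is the bookkeeping in the Hecke step: choosing the right-action convention for $T_p$ so that it matches the normalization in \eqref{eq:MT-distribution}, and tracking the overall sign of $\varphi_f^+(D_0)|_{(0,1)}$ relative to $\mathrm{L}(f,1)/\Omega_f^+$. As a cross-check, I would derive the same relation by specializing the distribution relation \eqref{eq:MT-distribution} formally to level $0$.
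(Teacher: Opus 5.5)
Your argument is correct, but it computes the one new factor $A_{0,0}$ by a different mechanism from the paper. The paper works analytically. Starting from Lemma~\ref{lem:Ani-integral-formula}, it moves the cusps $a/p$ into the upper half-plane and expands $f$ in its Fourier series. It then evaluates $\sum_{a=1}^{p-1}e^{2\pi i ar/p}=-1+p\mathbf 1_{p\mid r}$ and applies the coefficient recursion $a_{pm}=a_pa_m-p^{k-1}a_{m/p}$ together with the Abel limits $F(py),F(p^2y)\to\mathrm{L}(f,1)$.

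You instead use the eigen-relation $\varphi_f^+\mid T_p=a_p\varphi_f^+$ on $D_0$, which is the integrated form of the same recursion. Your route has three advantages:
\begin{enumerate}
\item it is shorter and purely algebraic, with no interchange of sums and limits and no boundary regularization;
\item it stays inside $V_{k-2}(\mathcal O)$;
\item it makes the factor transparent: $\gamma_0$ and $\gamma_\infty$ fix $D_0$ and contribute $1$ and $p^{k-2}$ times $\varphi_f^+(D_0)|_{(0,1)}$, while $\gamma_1,\dots,\gamma_{p-1}$ produce exactly $A_{0,0}$.
\end{enumerate}
Its cost is that the Hecke normalization must match the paper's, and it does. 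For $D=\{r\}-\{s\}$, a direct substitution in \eqref{complexmodsym}, with $P\mid\gamma=P((X,Y)\gamma^*)$, gives $\sum_\gamma\xi_f(\gamma D)\mid\gamma=2\pi i\int_s^r(T_pf)(z)(zX+Y)^{k-2}\,dz$, where $T_pf(z)=p^{k-1}f(pz)+p^{-1}\sum_{a=0}^{p-1}f\bigl((z+a)/p\bigr)$. Moreover $T_p$ commutes with $\iota$. The paper's Fourier computation avoids these conventions, and it is the template the paper reuses in Lemma~\ref{lem:A1-A0}.

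The one step you should not leave as ``$\pm$'' is the sign of $\varphi_f^+(D_0)|_{(0,1)}$. Both the exact identity and the congruence depend on it. The sign is forced by \eqref{complexmodsym}: since $\iota D_0=D_0$ and $k$ is even, the $\iota$-odd part of $\xi_f(D_0)$ vanishes at $(0,1)$, so
\[
\varphi_f^+(D_0)\big|_{(0,1)}=\frac{2\pi i}{\Omega_f^+}\int_0^{i\infty}f(z)\,dz=-\frac{2\pi}{\Omega_f^+}\int_0^{\infty}f(it)\,dt=-\frac{\mathrm{L}(f,1)}{\Omega_f^+}.
\]
This cancels the minus sign in your formula $A_{0,0}=-(1-a_p+p^{k-2})\varphi_f^+(D_0)|_{(0,1)}$ and gives exactly \eqref{eq:A00-bottom}.

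Finally, drop the proposed cross-check. The relation \eqref{eq:MT-distribution} is only stated for $n\geq1$. Your two-term relation at $D_0$, whose extra $-1$ comes from the coset $\gamma_0$, is not a formal specialization of it. Your $T_p$ computation is itself the correct bottom-layer relation.
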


\begin{proof}
For \(1\leq i\leq p-2\), equation
\eqref{eq:A0i-interpolation} identifies \(A_{0,i}\) with the
normalized twisted \(L\)-value corresponding to the nontrivial
character \(\omega^i\). Since
\[
        \mathrm{L}_{K_0}^{\operatorname{alg}}(f,1)
        =
        \frac{\mathrm{L}(f,1)}{\Omega_f^+}
        \prod_{i=1}^{p-2}A_{0,i}
\]
and \(A_0(f)=\prod_{i=0}^{p-2}A_{0,i}\), the only factor in
\(A_0(f)\) not determined by \eqref{eq:A0i-interpolation} is the
trivial-character term \(A_{0,0}\). 
\par It therefore remains to compute
\(A_{0,0}\). Invoking Lemma \ref{lem:Ani-integral-formula}, we have that
\[
        A_{0,0}
        =
        \frac{2\pi i}{\Omega_f^+}
        \sum_{a=1}^{p-1}
        \int_{a/p}^{i\infty}f(z)\,dz.
\]
For \(y>0\), termwise integration
of the absolutely convergent Fourier series gives
\[
        2\pi i
        \int_{a/p+iy}^{i\infty}f(z)\,dz
        =
        -\sum_{r\geq1}
        \frac{a_r(f)}{r}
        e^{2\pi i ar/p}e^{-2\pi ry}.
\]
We carry out the calculation for \(y>0\) and then let \(y\to0^+\).

Summing over \(a=1,\ldots,p-1\), we may interchange the two sums.
The inner exponential sum is
\[
        \sum_{a=1}^{p-1}e^{2\pi i ar/p}
        =
        -1+p\,\mathbf 1_{p\mid r}.
\]
Consequently, if
\[
        F(y)
        :=
        \sum_{m\geq1}\frac{a_m(f)}m e^{-2\pi my},
\]
then
\begin{equation}
\label{eq:A00-first}
        A_{0,0}
        =
        \frac1{\Omega_f^+}
        \lim_{y\to0^+}
        \left(
        F(y)
        -
        p\sum_{p\mid r}
        \frac{a_r(f)}r e^{-2\pi ry}
        \right).
\end{equation}
\par For \(y>0\), set
\[
        F(y)
        :=
        \sum_{m\geq1}\frac{a_m(f)}m e^{-2\pi my}.
\]
Using the Fourier expansion
\[
        f(it)=\sum_{m\geq1}a_m(f)e^{-2\pi mt},
\]
which is absolutely convergent for \(t>0\), we may integrate term by
term on \([y,\infty)\). This gives
\[
        2\pi\int_y^\infty f(it)\,dt
        =
        \sum_{m\geq1}\frac{a_m(f)}m e^{-2\pi my}
        =
        F(y).
\]
Letting \(y\to0^+\), and using the cuspidality of \(f\), we obtain
\[
        \lim_{y\to0^+}F(y)
        =
        2\pi\int_0^\infty f(it)\,dt.
\]
The Mellin transform of \(f\) gives
\[
        \int_0^\infty f(it)t^{s-1}\,dt
        =
        (2\pi)^{-s}\Gamma(s)\mathrm{L}(f,s),
\]
and hence, at \(s=1\),
\[
        2\pi\int_0^\infty f(it)\,dt
        =
        \mathrm{L}(f,1).
\]
Therefore
\[
        \lim_{y\to0^+}F(y)=\mathrm{L}(f,1).
\]
\par It remains to
evaluate the contribution from the Fourier coefficients whose indices
are divisible by \(p\).

Writing \(r=pm\), this contribution becomes
\[
        p\sum_{p\mid r}\frac{a_r(f)}r e^{-2\pi ry}
        =
        \sum_{m\geq1}\frac{a_{pm}(f)}m e^{-2\pi pmy}.
\]
Since \(p\nmid N\) and the nebentype is trivial, the Hecke relation at
\(p\) is
\[
        a_{pm}(f)
        =
        a_pa_m(f)-p^{k-1}a_{m/p}(f),
\]
where \(a_{m/p}(f)=0\) when \(p\nmid m\). Substituting this relation,
the first term gives \(a_pF(py)\). In the second term only the
multiples \(m=pn\) occur, and hence
\[
        p^{k-1}
        \sum_{m\geq1}
        \frac{a_{m/p}(f)}m e^{-2\pi pmy}
        =
        p^{k-2}F(p^2y).
\]
Therefore
\[
        \sum_{m\geq1}\frac{a_{pm}(f)}m e^{-2\pi pmy}
        =
        a_pF(py)-p^{k-2}F(p^2y).
\]
As \(y\to0^+\), both \(F(py)\) and \(F(p^2y)\) tend to
\(\mathrm{L}(f,1)\). Hence the \(p\)-divisible contribution in
\eqref{eq:A00-first} tends to
\((a_p-p^{k-2})\mathrm{L}(f,1)\). Substituting this into
\eqref{eq:A00-first} gives
\begin{equation}
\label{eq:A00-bottom}
        A_{0,0}
        =
        \bigl(1-a_p+p^{k-2}\bigr)
        \frac{\mathrm{L}(f,1)}{\Omega_f^+}.
\end{equation}

Multiplying \eqref{eq:A00-bottom} by the identities
\eqref{eq:A0i-interpolation} for \(1\leq i\leq p-2\) gives
\eqref{eq:A0-exact-bottom-value}. Finally, since \(k>2\), we have
\(p^{k-2}\in\mathfrak m\), and reducing
\eqref{eq:A0-exact-bottom-value} modulo \(\mathfrak m\) gives
\eqref{eq:A0-mod-m-bottom-value}.
\end{proof}

For \(1\leq i\leq p-2\) and \(r\in\Z\), define
\[
        S_i(r)
        :=
        \sum_{a\in(\Z/p^2\Z)^\times}
        \omega^i(a\bmod p)e^{2\pi i ar/p^2}.
\]
Thus \(S_i(r)\) is the exponential sum which arises when the
character \(\omega^i\), originally defined modulo \(p\), is pulled
back along the reduction map
\[
        (\Z/p^2\Z)^\times\longrightarrow(\Z/p\Z)^\times.
\]
As usual, when \(\omega^i\) is viewed as a Dirichlet character on
\(\Z\), we extend it by \(0\) on integers divisible by \(p\).

\begin{lemma}
\label{lem:inflated-gauss-sum}
For \(1\leq i\leq p-2\), one has
\begin{equation}
\label{eq:p2-inflated-gauss-sum}
        S_i(r)
        =
        \begin{cases}
        0,&p\nmid r,\\[3pt]
        p\,\tau(\omega^i)\overline{\omega^i}(t),&r=pt.
        \end{cases}
\end{equation}
Here, in the second case,
\(\overline{\omega^i}(t)=0\) if \(p\mid t\).
\end{lemma}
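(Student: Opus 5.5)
We write each $a\in(\Z/p^2\Z)^\times$ uniquely as $a=b+pc$, with $b$ running over $1,\dots,p-1$ and $c$ over $0,\dots,p-1$. This parametrization is a bijection onto the units modulo $p^2$, and since $a\bmod p=b$ we have $\omega^i(a\bmod p)=\omega^i(b)$. The sum therefore factors:
\[
        S_i(r)
        =
        \sum_{b=1}^{p-1}\omega^i(b)e^{2\pi i br/p^2}
        \sum_{c=0}^{p-1}e^{2\pi i cr/p}.
\]
The inner sum over $c$ is a complete character sum of the additive character $c\mapsto e^{2\pi i cr/p}$ on $\Z/p\Z$. It equals $p$ if $p\mid r$ and $0$ otherwise. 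This gives the first case immediately.

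Now suppose $r=pt$. Then $e^{2\pi i br/p^2}=e^{2\pi i bt/p}$, so
\[
        S_i(pt)=p\sum_{b=1}^{p-1}\omega^i(b)e^{2\pi i bt/p}.
\]
It remains to identify this sum with $\tau(\omega^i)\overline{\omega^i}(t)$, using the standard Gauss-sum twisting identity for the primitive character $\omega^i$ of conductor $p$.

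If $p\nmid t$, substitute $b\mapsto bt^{-1}$ modulo $p$. This sends the sum to $\omega^i(t^{-1})\sum_b\omega^i(b)e^{2\pi i b/p}=\overline{\omega^i}(t)\tau(\omega^i)$. Here we use that $\omega^i$ takes root-of-unity values, so $\omega^i(t^{-1})=\overline{\omega^i}(t)$.

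If $p\mid t$, the exponential is identically $1$, and the sum becomes $\sum_b\omega^i(b)$. This vanishes by orthogonality because $\omega^i$ is nontrivial for $1\le i\le p-2$, which matches the convention $\overline{\omega^i}(t)=0$.

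The only point needing care is convention consistency. We must check that $\tau(\omega^i)$ is exactly the Gauss sum defined earlier, namely $\sum_{a\bmod p}\chi(a)\zeta_p^a$ with $\zeta_p=e^{2\pi i/p}$. We must also check that $\omega^i$, though $\Z_p$-valued, is regarded as complex-valued via the fixed embeddings, so that complex conjugation agrees with inversion on its values. With these points verified there is no real obstacle; the argument is elementary.
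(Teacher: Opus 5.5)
Your proposal is correct and follows the paper's proof essentially step by step. It uses the same decomposition $a=b+pc$ of units modulo $p^2$, the same vanishing of the inner geometric sum when $p\nmid r$, the same change of variables (your $b\mapsto bt^{-1}$ is the paper's $c\equiv bt$), and the same orthogonality argument when $p\mid t$; your convention check also agrees with the paper, which defines $\tau(\chi)$ using $\zeta_{p}=e^{2\pi i/p}$.
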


\begin{proof}
Every unit modulo \(p^2\) may be written uniquely in the form
\[
        a=b+pj,
        \qquad
        1\leq b\leq p-1,\qquad
        0\leq j\leq p-1.
\]
Since \(\omega^i\) is pulled back from modulus \(p\), we have
\(\omega^i(b+pj)=\omega^i(b)\). Hence
\[
\begin{aligned}
        S_i(r)
        &=
        \sum_{b=1}^{p-1}
        \sum_{j=0}^{p-1}
        \omega^i(b)
        e^{2\pi i(b+pj)r/p^2}                                    \\
        &=
        \sum_{b=1}^{p-1}
        \omega^i(b)e^{2\pi i br/p^2}
        \sum_{j=0}^{p-1}e^{2\pi i jr/p}.
\end{aligned}
\]
The inner sum is geometric. If \(p\nmid r\), then
\(e^{2\pi i r/p}\neq1\), and therefore
\[
        \sum_{j=0}^{p-1}e^{2\pi i jr/p}
        =
        \frac{1-e^{2\pi i r}}{1-e^{2\pi i r/p}}
        =
        0.
\]
Thus \(S_i(r)=0\) whenever \(p\nmid r\).
\par Suppose now that \(r=pt\). Then the inner sum is equal to \(p\), so
\[
        S_i(pt)
        =
        p\sum_{b=1}^{p-1}
        \omega^i(b)e^{2\pi i bt/p}.
\]
If \(p\nmid t\), multiplication by \(t\) permutes
\((\Z/p\Z)^\times\). Making the change of variables
\(c\equiv bt\pmod p\), we obtain
\[
\begin{aligned}
        \sum_{b=1}^{p-1}
        \omega^i(b)e^{2\pi i bt/p}
        &=
        \sum_{c=1}^{p-1}
        \omega^i(ct^{-1})e^{2\pi i c/p}                         \\
        &=
        \omega^i(t^{-1})
        \sum_{c=1}^{p-1}\omega^i(c)e^{2\pi i c/p}                \\
        &=
        \overline{\omega^i}(t)\tau(\omega^i).
\end{aligned}
\]
Here we used
\[
        \omega^i(t^{-1})
        =
        \omega^i(t)^{-1}
        =
        \overline{\omega^i}(t).
\]
If \(p\mid t\), then
\[
        e^{2\pi i bt/p}=1
\]
for every \(b\), and hence
\[
        \sum_{b=1}^{p-1}
        \omega^i(b)e^{2\pi i bt/p}
        =
        \sum_{b=1}^{p-1}\omega^i(b)
        =
        0,
\]
since \(\omega^i\) is nontrivial. This agrees with the convention
\(\overline{\omega^i}(t)=0\) when \(p\mid t\). Therefore
\[
        S_i(pt)
        =
        p\,\tau(\omega^i)\overline{\omega^i}(t),
\]
which proves \eqref{eq:p2-inflated-gauss-sum}.
\end{proof}

For integers \(q\geq1\) and \(r\in\Z\), recall that the
\emph{Ramanujan sum} is defined by
\begin{equation}\label{ramsum}c_q(r)
        :=
        \sum_{a\in(\Z/q\Z)^\times}
        e^{2\pi i ar/q}.
\end{equation}
We shall use the following special case for \(q=p^2\).

\begin{lemma}
\label{lem:Ramanujan-p2}
For every \(r\in\Z\), one has
\begin{equation}
\label{eq:ramanujan-p2}
        c_{p^2}(r)
        =
        \begin{cases}
        0,&p\nmid r,\\
        -p,&p\mid r,\ p^2\nmid r,\\
        p(p-1),&p^2\mid r.
        \end{cases}
\end{equation}
\end{lemma}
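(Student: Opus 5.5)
We will compute $c_{p^2}(r)$ by decomposing units modulo $p^2$ exactly as in the proof of Lemma~\ref{lem:inflated-gauss-sum}, with the trivial character in place of $\omega^i$. Every $a\in(\Z/p^2\Z)^\times$ can be written uniquely as $a=b+pj$ with $1\leq b\leq p-1$ and $0\leq j\leq p-1$. Substituting this into \eqref{ramsum} factors the sum as
\[
        c_{p^2}(r)
        =
        \sum_{b=1}^{p-1}e^{2\pi i br/p^2}
        \sum_{j=0}^{p-1}e^{2\pi i jr/p}.
\]

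The inner sum is a geometric series in $e^{2\pi i r/p}$. It vanishes when $p\nmid r$ and equals $p$ when $p\mid r$. This settles the first case at once: $c_{p^2}(r)=0$ when $p\nmid r$.

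For $r=pt$ we get
\[
        c_{p^2}(pt)=p\sum_{b=1}^{p-1}e^{2\pi i bt/p}=p\,c_p(t).
\]
We then evaluate the complete sum over $b$ modulo $p$ with $0$ removed.

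If $p\nmid t$, the terms $e^{2\pi i bt/p}$ for $1\leq b\leq p-1$ run over all nontrivial $p$-th roots of unity. Their sum is $-1$, since the sum over all $p$-th roots of unity is $0$. This gives $c_{p^2}(r)=-p$ when $p\mid r$ and $p^2\nmid r$.

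If $p\mid t$, that is $p^2\mid r$, every term equals $1$. The sum is then $p-1$, so $c_{p^2}(r)=p(p-1)$.

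There is no real obstacle here. The only point needing care is the uniqueness of the decomposition $a=b+pj$, which identifies $(\Z/p^2\Z)^\times$ with the pairs $(b,j)$ in the stated ranges, and this is already used in Lemma~\ref{lem:inflated-gauss-sum}.
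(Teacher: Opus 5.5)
Your computation is correct. The decomposition $a=b+pj$ identifies $(\Z/p^2\Z)^\times$ with the pairs $(b,j)$, so the sum factors. The identity $c_{p^2}(pt)=p\,c_p(t)$, together with $c_p(t)=-1$ for $p\nmid t$ and $c_p(t)=p-1$ for $p\mid t$, then gives all three cases.

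The paper argues differently: it computes nothing and instead quotes the general evaluation of Ramanujan sums from Apostol, $c_q(r)=\sum_{d\mid (q,r)}\mu(q/d)\,d$. For $q=p^2$ only $d=p$ and $d=p^2$ can contribute, giving $0$, $-p$ and $-p+p^2=p(p-1)$.

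The citation is shorter and works for every modulus. Your version is self-contained, and it shows the lemma is the trivial-character case of the same factorization used in Lemma~\ref{lem:inflated-gauss-sum}. It also shows exactly where the two situations differ. For $p\mid t$ your inner sum over $b$ is $p-1$, whereas for a nontrivial character it vanishes. This surviving contribution from $p^2\mid r$ is precisely what produces the discrepancy $p^{k-2}(p-1)\,\mathrm{L}(f,1)/\Omega_f^+$ in the $i=0$ case of Lemma~\ref{lem:A1-A0}, while the nontrivial characters give an exact identity there.
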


\begin{proof}
This follows immediately from the standard formula for Ramanujan
sums; see \cite[\S 8.3, Theorem~8.6]{Apostol}.
\end{proof}

\begin{lemma}
\label{lem:A1-A0}
For every \(0\leq i\leq p-2\),
\begin{equation}
\label{eq:A1-A0}
        A_{1,i}
        \equiv
        a_pA_{0,i}
        \pmod{\mathfrak m}.
\end{equation}
For \(i\neq0\), one in fact has the exact identity
\[
        A_{1,i}=a_pA_{0,i}.
\]
\end{lemma}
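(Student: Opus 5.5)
The plan is to compute both sides explicitly from the integral formula \eqref{eq:Ani-integral-formula} of Lemma~\ref{lem:Ani-integral-formula} at levels $n=1$ and $n=0$. We use the same Fourier-expansion technique as in the proof of Lemma~\ref{lem:A0-bottom}. For $y>0$ we write
\[
2\pi i\int_{a/p^{n+1}+iy}^{i\infty}f(z)\,dz=-\sum_{r\geq1}\frac{a_r(f)}{r}e^{2\pi i ar/p^{n+1}}e^{-2\pi ry}.
\]
We then sum over $a\in(\Z/p^{n+1}\Z)^\times$ with weights $\omega^i(a\bmod p)$, interchange the finite and infinite sums, and let $y\to0^+$. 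The character sums that appear are exactly those evaluated in Lemmas~\ref{lem:inflated-gauss-sum} and~\ref{lem:Ramanujan-p2}. We then use the Hecke relation $a_{pm}(f)=a_pa_m(f)-p^{k-1}a_{m/p}(f)$ to reorganize the $p$-divisible part of the Fourier expansion.

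\emph{Case $i\neq0$.} At level $0$, the Gauss-sum identity $\sum_{b}\omega^i(b)e^{2\pi i br/p}=\tau(\omega^i)\overline{\omega^i}(r)$ (the computation in the proof of Lemma~\ref{lem:inflated-gauss-sum}) gives
\[
A_{0,i}=-\frac{\tau(\omega^i)}{\Omega_f^{\varepsilon_i}}\lim_{y\to0^+}\Phi(y),\qquad \Phi(y):=\sum_{t\geq1}\frac{a_t(f)\overline{\omega^i}(t)}{t}e^{-2\pi ty}.
\]
At level $1$, Lemma~\ref{lem:inflated-gauss-sum} kills every $r$ with $p\nmid r$. For $r=pt$ it contributes $p\,\tau(\omega^i)\overline{\omega^i}(t)$, so
\[
A_{1,i}=-\frac{\tau(\omega^i)}{\Omega_f^{\varepsilon_i}}\lim_{y\to0^+}\sum_{t\geq1}\frac{a_{pt}(f)\overline{\omega^i}(t)}{t}e^{-2\pi pty}.
\]
In the Hecke relation, the term $p^{k-1}a_{t/p}(f)$ is supported on $p\mid t$, where $\overline{\omega^i}(t)=0$. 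The sum is therefore $a_p\Phi(py)$, and as $y\to0^+$ this tends to $a_p\lim\Phi$. This gives the exact identity $A_{1,i}=a_pA_{0,i}$.

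\emph{Case $i=0$.} Here the level-$1$ weights are trivial and the character sum is the Ramanujan sum $c_{p^2}(r)$. By Lemma~\ref{lem:Ramanujan-p2},
\[
\sum_r\frac{a_r}{r}c_{p^2}(r)e^{-2\pi ry}=-\sum_{m}\frac{a_{pm}}{m}e^{-2\pi pmy}+\sum_m\frac{a_{p^2m}}{m}e^{-2\pi p^2my}.
\]
The first series tends to $(a_p-p^{k-2})\mathrm{L}(f,1)$, exactly as in the proof of Lemma~\ref{lem:A0-bottom}. For the second we apply the Hecke relation $a_{p^2m}=a_pa_{pm}-p^{k-1}a_m$, which shows it tends to $a_p(a_p-p^{k-2})\mathrm{L}(f,1)-p^{k-1}\mathrm{L}(f,1)$. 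Hence
\[
A_{1,0}=\bigl(a_p-a_p^2+a_pp^{k-2}-p^{k-2}+p^{k-1}\bigr)\frac{\mathrm{L}(f,1)}{\Omega_f^+}.
\]
Since $k>2$, both $p^{k-2}$ and $p^{k-1}$ lie in $\mathfrak m$, and \eqref{eq:trivial-algebraic-integrality} gives $\mathrm{L}(f,1)/\Omega_f^+\in\mathcal O$. Reducing modulo $\mathfrak m$ yields $A_{1,0}\equiv a_p(1-a_p)\,\mathrm{L}(f,1)/\Omega_f^+$. Comparing with \eqref{eq:A00-bottom}, which gives $A_{0,0}\equiv(1-a_p)\mathrm{L}(f,1)/\Omega_f^+$, we obtain $A_{1,0}\equiv a_pA_{0,0}\pmod{\mathfrak m}$.

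The main obstacle is the analytic justification: interchanging the sums and passing to the limit $y\to0^+$ for the rescaled series $F(py)$, $F(p^2y)$ and $\Phi(py)$. I would handle this exactly as in Lemma~\ref{lem:A0-bottom}. For fixed $y>0$ the series converge absolutely, and each limit is $2\pi\int_0^\infty(\cdot)(it)\,dt$ of a cusp form (or of its twist by $\overline{\omega^i}$), which exists by cuspidality. A dilation $y\mapsto py$ does not change the limit. The only genuinely new point is that the twisted series $\Phi$ has a limit; this follows because the twist of $f$ by $\overline{\omega^i}$ is again a cusp form.
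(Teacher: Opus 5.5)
Your proposal is correct and follows essentially the same route as the paper. Both arguments use the Abel-regularized Fourier expansion of the integral formula, the inflated Gauss sum and Ramanujan sum evaluations, and the Hecke relation at $p$, and both settle the case $i=0$ via the integrality of $\mathrm{L}(f,1)/\Omega_f^+$. The differences are minor: you compute $A_{0,i}$ directly from the same Fourier expansion, which keeps the signs for $i\neq0$ internally consistent, whereas the paper quotes the interpolation formula; and for $i=0$ you reduce both sides modulo $\mathfrak m$ separately, whereas the paper records the exact difference $A_{1,0}-a_pA_{0,0}=p^{k-2}(p-1)\mathrm{L}(f,1)/\Omega_f^+$.
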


\begin{proof}
We first treat the case \(1\leq i\leq p-2\). Set $\chi:=\omega^i$ and $\varepsilon:=\chi(-1)$. The character \(\chi\) is primitive of conductor \(p\). By Lemma \ref{lem:Ani-integral-formula},
\begin{equation}
\label{eq:A1i-integral}
        A_{1,i}
        =
        \frac{2\pi i}{\Omega_f^\varepsilon}
        \sum_{a\in(\Z/p^2\Z)^\times}
        \chi(a)
        \int_{a/p^2}^{i\infty}f(z)\,dz.
\end{equation}

We now evaluate the integrals in \eqref{eq:A1i-integral} using the
Fourier expansion of \(f\).  Since the points \(a/p^2\) lie on the
boundary of the upper half-plane, we first move them a distance
\(y>0\) into the upper half-plane.  For \(y>0\),
\[
        2\pi i
        \int_{a/p^2+iy}^{i\infty}f(z)\,dz
        =
        -\sum_{r\geq1}
        \frac{a_r(f)}{r}
        e^{2\pi i ar/p^2}e^{-2\pi ry}.
\]
The series on the right is absolutely convergent for every \(y>0\).
Since the sum over \(a\in(\Z/p^2\Z)^\times\) is finite, we may
therefore interchange the sum over \(a\) with the Fourier series.
Taking the limit \(y\to0^+\) only after this rearrangement gives
\[
\begin{aligned}
        A_{1,i}
        &=
        -\frac1{\Omega_f^\varepsilon}
        \lim_{y\to0^+}
        \sum_{a\in(\Z/p^2\Z)^\times}
        \chi(a)
        \sum_{r\geq1}
        \frac{a_r(f)}{r}
        e^{2\pi i ar/p^2}e^{-2\pi ry}             \\
        &=
        -\frac1{\Omega_f^\varepsilon}
        \lim_{y\to0^+}
        \sum_{r\geq1}
        \frac{a_r(f)}{r}e^{-2\pi ry}
        \sum_{a\in(\Z/p^2\Z)^\times}
        \chi(a)e^{2\pi i ar/p^2}.
\end{aligned}
\]

The factor \(p\) in \eqref{eq:p2-inflated-gauss-sum} cancels the factor
\(p\) in the denominator \(r=pt\) of the Fourier integral. Thus, with
the same modular-symbol and Gauss-sum conventions as in
\eqref{eq:A0i-interpolation}, the Abel-regularized Fourier expression
for \(A_{1,i}\) is
\[
        A_{1,i}
        =
        \frac{\tau(\chi)}{\Omega_f^\varepsilon}
        \lim_{y\to0^+}
        \sum_{t\geq1}
        \frac{a_{pt}(f)}{t}
        \overline{\chi}(t)e^{-2\pi pty}.
\]
On the other hand, \eqref{eq:A0i-interpolation} gives
\[
        A_{0,i}
        =
        \frac{\tau(\chi)}{\Omega_f^\varepsilon}
        \lim_{y\to0^+}
        \sum_{t\geq1}
        \frac{a_t(f)}{t}
        \overline{\chi}(t)e^{-2\pi ty}.
\]
We now use the Hecke relation at \(p\nmid N\):
\[
        a_{pt}(f)
        =
        a_pa_t(f)-p^{k-1}a_{t/p}(f),
\]
where \(a_{t/p}(f)=0\) if \(p\nmid t\). Substituting this into the
expression for \(A_{1,i}\), we obtain
\[
\begin{aligned}
        A_{1,i}
        &=
        \frac{\tau(\chi)}{\Omega_f^\varepsilon}
        \lim_{y\to0^+}
        \left(
        a_p
        \sum_{t\geq1}
        \frac{a_t(f)}{t}
        \overline{\chi}(t)e^{-2\pi pty}
        \right.\\
        &\hspace{45mm}\left.
        -
        p^{k-1}
        \sum_{t\geq1}
        \frac{a_{t/p}(f)}{t}
        \overline{\chi}(t)e^{-2\pi pty}
        \right).
\end{aligned}
\]
The second sum vanishes term by term. Indeed, if \(p\nmid t\), then
\(a_{t/p}(f)=0\), whereas if \(p\mid t\), then
\(\overline{\chi}(t)=0\). Hence
\[
        A_{1,i}
        =
        a_p\frac{\tau(\chi)}{\Omega_f^\varepsilon}
        \lim_{y\to0^+}
        \sum_{t\geq1}
        \frac{a_t(f)}{t}
        \overline{\chi}(t)e^{-2\pi pty}.
\]
Since \(py\to0\) as \(y\to0^+\), the last Abel limit is
\(\mathrm{L}(f,\overline{\chi},1)\). Consequently,
\[
        A_{1,i}
        =
        a_p\tau(\chi)
        \frac{\mathrm{L}(f,\overline{\chi},1)}
             {\Omega_f^\varepsilon}
        =
        a_pA_{0,i},
\]
where the last equality is \eqref{eq:A0i-interpolation}. This proves
the exact identity
\[
        A_{1,i}=a_pA_{0,i}
        \qquad
        (1\leq i\leq p-2).
\]
It remains to treat the case when \(i=0\). By Lemma \ref{lem:Ani-integral-formula}, we find that
\[
        A_{1,0}
        =
        \frac{2\pi i}{\Omega_f^+}
        \sum_{a\in(\Z/p^2\Z)^\times}
        \int_{a/p^2}^{i\infty}f(z)\,dz.
\]
To justify the Fourier expansion at the lower endpoints, we again use
Abel regularization.  For \(y>0\),
\[
        2\pi i
        \int_{a/p^2+iy}^{i\infty}f(z)\,dz
        =
        -\sum_{r\geq1}
        \frac{a_r(f)}{r}
        e^{2\pi i ar/p^2}e^{-2\pi ry},
\]
and the series on the right is absolutely convergent.  Hence, for
fixed \(y>0\), we may sum over \(a\) and interchange the finite sum
with the Fourier series.  It follows that
\[
\begin{aligned}
        A_{1,0}
        &=
        -\frac1{\Omega_f^+}
        \lim_{y\to0^+}
        \sum_{a\in(\Z/p^2\Z)^\times}
        \sum_{r\geq1}
        \frac{a_r(f)}{r}
        e^{2\pi i ar/p^2}e^{-2\pi ry} \\
        &=
        -\frac1{\Omega_f^+}
        \lim_{y\to0^+}
        \sum_{r\geq1}
        \frac{a_r(f)}{r}e^{-2\pi ry}
        \sum_{a\in(\Z/p^2\Z)^\times}
        e^{2\pi i ar/p^2}.
\end{aligned}
\]
By the definition of the Ramanujan sum \eqref{ramsum}, the inner sum is
\(c_{p^2}(r)\). Therefore
\[
        A_{1,0}
        =
        -\frac1{\Omega_f^+}
        \lim_{y\to0^+}
        \sum_{r\geq1}
        \frac{a_r(f)}r e^{-2\pi ry}c_{p^2}(r).
\]
Using \eqref{eq:ramanujan-p2}, the coefficient of a term with
\(p\mid r\), \(p^2\nmid r\), is \(p\), while the coefficient of a term
with \(p^2\mid r\) is \(-p(p-1)\). These two cases can be combined as
\[
        A_{1,0}
        =
        \frac1{\Omega_f^+}
        \lim_{y\to0^+}
        \left(
        p\sum_{p\mid r}
        \frac{a_r(f)}r e^{-2\pi ry}
        -
        p^2\sum_{p^2\mid r}
        \frac{a_r(f)}r e^{-2\pi ry}
        \right).
\]
Indeed, for \(p^2\mid r\) the coefficient on the right is
\(p-p^2=-p(p-1)\), as required. We evaluate the two terms separately. In the first sum, write \(r=pm\).
Then
\[
\begin{aligned}
        p\sum_{p\mid r}
        \frac{a_r(f)}r e^{-2\pi ry}
        &=
        \sum_{m\geq1}
        \frac{a_{pm}(f)}m e^{-2\pi pmy}.
\end{aligned}
\]
Using
\[
        a_{pm}(f)
        =
        a_pa_m(f)-p^{k-1}a_{m/p}(f),
\]
we obtain
\[
\begin{aligned}
        \sum_{m\geq1}
        \frac{a_{pm}(f)}m e^{-2\pi pmy}
        &=
        a_p
        \sum_{m\geq1}
        \frac{a_m(f)}m e^{-2\pi pmy}        \\
        &\quad
        -
        p^{k-1}
        \sum_{m\geq1}
        \frac{a_{m/p}(f)}m e^{-2\pi pmy}.
\end{aligned}
\]
The first sum is \(F(py)\). In the second sum only multiples \(m=pn\)
contribute, so
\[
\begin{aligned}
        p^{k-1}
        \sum_{m\geq1}
        \frac{a_{m/p}(f)}m e^{-2\pi pmy}
        &=
        p^{k-1}
        \sum_{n\geq1}
        \frac{a_n(f)}{pn}e^{-2\pi p^2ny} \\
        &=
        p^{k-2}F(p^2y).
\end{aligned}
\]
Therefore
\begin{equation}
\label{eq:first-p2-Abel-sum}
        \sum_{m\geq1}
        \frac{a_{pm}(f)}m e^{-2\pi pmy}
        =
        a_pF(py)-p^{k-2}F(p^2y).
\end{equation}
Since \(F(u)\to\mathrm{L}(f,1)\) as \(u\to0^+\), it follows that
\begin{equation}
\label{eq:first-p2-Abel-limit}
        \lim_{y\to0^+}
        \sum_{m\geq1}
        \frac{a_{pm}(f)}m e^{-2\pi pmy}
        =
        (a_p-p^{k-2})\mathrm{L}(f,1).
\end{equation}
For the second term, writing \(r=p^2m\) gives
\[
        p^2\sum_{p^2\mid r}
        \frac{a_r(f)}r e^{-2\pi ry}
        =
        \sum_{m\geq1}
        \frac{a_{p^2m}(f)}m e^{-2\pi p^2my}.
\]
Applying the Hecke relation to the index \(pm\), we have
\[
        a_{p^2m}(f)
        =
        a_pa_{pm}(f)-p^{k-1}a_m(f).
\]
Consequently,
\[
\begin{aligned}
        \sum_{m\geq1}
        \frac{a_{p^2m}(f)}m e^{-2\pi p^2my}
        &=
        a_p
        \sum_{m\geq1}
        \frac{a_{pm}(f)}m e^{-2\pi p^2my}       \\
        &\quad
        -
        p^{k-1}
        \sum_{m\geq1}
        \frac{a_m(f)}m e^{-2\pi p^2my}.
\end{aligned}
\]
The second sum is \(F(p^2y)\). Applying
\eqref{eq:first-p2-Abel-sum} with \(y\) replaced by \(py\) to the
first sum gives
\[
        \sum_{m\geq1}
        \frac{a_{pm}(f)}m e^{-2\pi p^2my}
        =
        a_pF(p^2y)-p^{k-2}F(p^3y).
\]
Hence
\[
\begin{aligned}
        \sum_{m\geq1}
        \frac{a_{p^2m}(f)}m e^{-2\pi p^2my}
        &=
        a_p
        \bigl(
        a_pF(p^2y)-p^{k-2}F(p^3y)
        \bigr)
        -
        p^{k-1}F(p^2y).
\end{aligned}
\]
Letting \(y\to0^+\), we find
\begin{equation}
\label{eq:second-p2-Abel-limit}
\begin{aligned}
        \lim_{y\to0^+}
        \sum_{m\geq1}
        \frac{a_{p^2m}(f)}m e^{-2\pi p^2my}
        &=
        \left(
        a_p(a_p-p^{k-2})-p^{k-1}
        \right)\mathrm{L}(f,1).
\end{aligned}
\end{equation}
Combining \eqref{eq:first-p2-Abel-limit} and
\eqref{eq:second-p2-Abel-limit}, we obtain
\[
\begin{aligned}
        A_{1,0}
        &=
        \frac1{\Omega_f^+}
        \left[
        (a_p-p^{k-2})
        -
        \bigl(
        a_p(a_p-p^{k-2})-p^{k-1}
        \bigr)
        \right]
        \mathrm{L}(f,1)                                      \\
        &=
        \frac1{\Omega_f^+}
        \left(
        a_p-a_p^2+a_pp^{k-2}-p^{k-2}+p^{k-1}
        \right)
        \mathrm{L}(f,1)                                      \\
        &=
        \left[
        a_p(1-a_p+p^{k-2})
        +
        p^{k-2}(p-1)
        \right]
        \frac{\mathrm{L}(f,1)}{\Omega_f^+}.
\end{aligned}
\]
By \eqref{eq:A00-bottom},
\[
        a_pA_{0,0}
        =
        a_p(1-a_p+p^{k-2})
        \frac{\mathrm{L}(f,1)}{\Omega_f^+}.
\]
Subtracting, we therefore obtain the exact identity
\[
        A_{1,0}-a_pA_{0,0}
        =
        p^{k-2}(p-1)
        \frac{\mathrm{L}(f,1)}{\Omega_f^+}.
\]
Since \(k>2\), we have \(p^{k-2}\in\mathfrak m\), while
\(\mathrm{L}(f,1)/\Omega_f^+\in\mathcal O\) by
\eqref{eq:trivial-algebraic-integrality}. Hence
\[
        A_{1,0}-a_pA_{0,0}\in\mathfrak m,
\]
or equivalently
\[
        A_{1,0}
        \equiv
        a_pA_{0,0}
        \pmod{\mathfrak m}.
\]
Together with the exact identity
\(A_{1,i}=a_pA_{0,i}\) for \(1\leq i\leq p-2\), this proves
\eqref{eq:A1-A0}.
\end{proof}

\begin{proposition}
\label{prop:augmentation-propagation}
Assume that \(f\) is ordinary at \(p\), so that
\(a_p\in\mathcal O^\times\). Then, for every \(n\geq0\) and every
\(0\leq i\leq p-2\),
\begin{equation}
\label{eq:Ani-propagation}
        A_{n,i}
        \equiv
        a_p^nA_{0,i}
        \pmod{\mathfrak m}.
\end{equation}
Consequently,
\begin{equation}
\label{eq:An-propagation}
        A_n(f)
        \equiv
        a_p^{n(p-1)}A_0(f)
        \pmod{\mathfrak m},
\end{equation}
and therefore
\begin{equation}
\label{eq:augmentation-bottom-general}
        A_n(f)
        \equiv
        a_p^{n(p-1)}
        (1-a_p)
        \mathrm{L}_{K_0}^{\operatorname{alg}}(f,1)
        \pmod{\mathfrak m}.
\end{equation}
\end{proposition}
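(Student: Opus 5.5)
The plan is induction on \(n\), working separately with each fixed \(i\). The base cases are \(n=0\), which is trivial, and \(n=1\), which is exactly Lemma~\ref{lem:A1-A0}: \(A_{1,i}\equiv a_pA_{0,i}\pmod{\mathfrak m}\).

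For the inductive step, assume \(n\geq1\) and that the congruence holds for \(n\) and \(n-1\). The augmentation recurrence \eqref{eq:augmentation-recursion} gives
\[
        A_{n+1,i}=a_pA_{n,i}-p^{k-1}A_{n-1,i}.
\]
Every \(A_{m,i}\) lies in \(\mathcal O\), being the augmentation of an element of \(\mathcal O[\Gamma_m]\). Since \(k>2\), we have \(p^{k-1}\in\mathfrak m\), so
\[
        A_{n+1,i}\equiv a_pA_{n,i}\equiv a_p^{n+1}A_{0,i}\pmod{\mathfrak m}.
\]
The recurrence starts at \(n\geq1\), so a separate base case at \(n=1\) is genuinely needed. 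Lemma~\ref{lem:A1-A0} supplies it, which is why that lemma was proved.

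Taking the product over \(0\leq i\leq p-2\) multiplies the congruences. This gives \eqref{eq:An-propagation}, with exponent \(n\) summed over \(p-1\) indices, i.e.\ \(a_p^{n(p-1)}\).

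Finally, substituting \eqref{eq:A0-mod-m-bottom-value} from Lemma~\ref{lem:A0-bottom} yields \eqref{eq:augmentation-bottom-general}.

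Ordinarity is not actually needed for the congruences themselves. It only makes the statement meaningful: since \(a_p\) is a unit, nonvanishing modulo \(\mathfrak m\) propagates.

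There is no real obstacle here. The only points needing care are the integrality of every \(A_{n,i}\), which keeps the reduction modulo \(\mathfrak m\) legitimate, and the correct indexing of the recurrence at the base.
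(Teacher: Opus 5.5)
Your proof is correct and matches the paper's argument: the base cases $n=0,1$ come from Lemma~\ref{lem:A1-A0}, the recurrence \eqref{eq:augmentation-recursion} reduced modulo $\mathfrak m$ (using $p^{k-1}\in\mathfrak m$) drives the induction, the product over the $p-1$ Teichm\"uller components gives \eqref{eq:An-propagation}, and Lemma~\ref{lem:A0-bottom} gives the last display. Your side remark that ordinarity plays no role in the congruences themselves is also accurate, since the paper's proof never uses $a_p\in\mathcal O^\times$ either.
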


\begin{proof}
The cases \(n=0,1\) follow from Lemma~\ref{lem:A1-A0}. For \(n\geq1\),
reduce \eqref{eq:augmentation-recursion} modulo \(\mathfrak m\). Since
\(k>2\), we have \(p^{k-1}\in\mathfrak m\), and therefore
\[
        A_{n+1,i}
        \equiv
        a_pA_{n,i}
        \pmod{\mathfrak m}.
\]
Induction gives \eqref{eq:Ani-propagation}. Multiplication over the
\(p-1\) Teichm\"uller components gives \eqref{eq:An-propagation}, and
Lemma~\ref{lem:A0-bottom} then gives
\eqref{eq:augmentation-bottom-general}.
\end{proof}

\par We now impose the non-anomalous condition
\begin{equation}
\label{eq:non-anomalous}
        a_p\not\equiv1\pmod{\mathfrak m}.
\end{equation}
Together with ordinarity, this implies that both \(a_p\) and \(1-a_p\)
are units in \(\mathcal O\).

\begin{corollary}
\label{cor:An-bottom-vanishing}
Assume that \(f\) is ordinary and satisfies
\eqref{eq:non-anomalous}. Then, for every \(n\geq0\),
\begin{equation}
\label{eq:An-vanishing-equivalence}
        A_n(f)\in\mathfrak m
        \quad\Longleftrightarrow\quad
        \mathrm{L}_{K_0}^{\operatorname{alg}}(f,1)\in\mathfrak m.
\end{equation}
\end{corollary}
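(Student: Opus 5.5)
The plan is to read the equivalence directly off the congruence \eqref{eq:augmentation-bottom-general} of Proposition~\ref{prop:augmentation-propagation}. That result, which uses only ordinarity, gives for every \(n\geq0\)
\[
        A_n(f)
        \equiv
        a_p^{n(p-1)}
        (1-a_p)
        \mathrm{L}_{K_0}^{\operatorname{alg}}(f,1)
        \pmod{\mathfrak m}.
\]
For \(n=0\) this is just the reduction \eqref{eq:A0-mod-m-bottom-value} of Lemma~\ref{lem:A0-bottom}, so no separate base case is needed.

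Next I would check that every quantity lies in \(\mathcal O\), so that reduction modulo \(\mathfrak m\) is meaningful. The elements \(A_{n,i}\) are augmentations of elements of \(\mathcal O[\Gamma_n]\), hence lie in \(\mathcal O\), and therefore so does \(A_n(f)\). Proposition~\ref{prop:algebraic-L-integrality} gives \(\mathrm{L}_{K_0}^{\operatorname{alg}}(f,1)\in\mathcal O\).

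Then I would pass to the residue field \(\kappa=\mathcal O/\mathfrak m\), which is a field. Ordinarity gives \(a_p\in\mathcal O^\times\), so \(a_p^{n(p-1)}\) is a unit. The non-anomalous condition \eqref{eq:non-anomalous} gives \(1-a_p\notin\mathfrak m\), so \(1-a_p\) is also a unit. The reduced congruence therefore expresses \(\overline{A_n(f)}\) as a nonzero element of \(\kappa\) times \(\overline{\mathrm{L}_{K_0}^{\operatorname{alg}}(f,1)}\). One side vanishes in \(\kappa\) if and only if the other does, which is exactly \eqref{eq:An-vanishing-equivalence}.

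There is no real obstacle here. The only point needing care is confirming that the hypotheses of Proposition~\ref{prop:augmentation-propagation} are met for all \(n\), including \(n=0\) and \(n=1\), where Lemma~\ref{lem:A1-A0} handles the trivial-character term only modulo \(\mathfrak m\). That lemma used \(k>2\) and the integrality \eqref{eq:trivial-algebraic-integrality}, and both are in force.
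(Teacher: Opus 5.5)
Your proposal is correct and follows the paper's argument: reduce the congruence \eqref{eq:augmentation-bottom-general} of Proposition~\ref{prop:augmentation-propagation} to the residue field $\kappa$. There $\overline{a_p}^{\,n(p-1)}$ is nonzero by ordinarity and $1-\overline{a_p}$ by the non-anomalous hypothesis, so $\overline{A_n(f)}$ vanishes exactly when $\overline{\mathrm{L}_{K_0}^{\operatorname{alg}}(f,1)}$ does; your check that both quantities lie in $\mathcal O$ spells out what the paper leaves implicit.
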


\begin{proof}
By \eqref{eq:augmentation-bottom-general}, in \(\kappa\) one has
\[
        \overline{A_n(f)}
        =
        \overline{a_p}^{\,n(p-1)}
        (1-\overline{a_p})
        \overline{\mathrm{L}_{K_0}^{\operatorname{alg}}(f,1)},
\]
where $\bar{x}$ denotes the reduction of $x$ modulo $\mathfrak{m}$.
The first two factors are nonzero, so the left-hand side vanishes
exactly when the last factor vanishes.
\end{proof}

\begin{corollary}
\label{cor:general-unit-stability}
Assume that \(f\) is ordinary and satisfies
\eqref{eq:non-anomalous}. Then, for every \(n\geq0\),
\begin{equation}
\label{eq:general-unit-stability}
        \mathrm{L}_{K_n}^{\operatorname{alg}}(f,1)\in\mathfrak m
        \quad\Longleftrightarrow\quad
        \mathrm{L}_{K_0}^{\operatorname{alg}}(f,1)\in\mathfrak m.
\end{equation}
Equivalently,
\[
        \mathrm{L}_{K_n}^{\operatorname{alg}}(f,1)\in\mathcal O^\times
        \quad\Longleftrightarrow\quad
        \mathrm{L}_{K_0}^{\operatorname{alg}}(f,1)\in\mathcal O^\times.
\]
\end{corollary}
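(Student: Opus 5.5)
The plan is an induction on \(n\), built on the layer factorization of Lemma~\ref{lem:algebraic-layer-factorization} and the congruence in Proposition~\ref{prop:general-algebraic-L-congruence}. The case \(n=0\) is a tautology. So fix \(n\geq1\) and suppose the equivalence \eqref{eq:general-unit-stability} is already known at level \(n-1\). Lemma~\ref{lem:algebraic-layer-factorization} gives
\[
\mathrm{L}_{K_n}^{\operatorname{alg}}(f,1)=\mathrm{L}_{K_{n-1}}^{\operatorname{alg}}(f,1)\,\mathscr L_n^{\operatorname{alg}}(f),
\]
and every factor here lies in \(\mathcal O\) by Proposition~\ref{prop:algebraic-L-integrality} and Lemma~\ref{lem:primitive-layer-identification}. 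Since \(\mathcal O\) is a discrete valuation ring, \(\mathfrak m\) is prime, so \(\mathrm{L}_{K_n}^{\operatorname{alg}}(f,1)\in\mathfrak m\) exactly when one of the two factors lies in \(\mathfrak m\).

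Next I control the new-layer factor. Lemma~\ref{lem:primitive-layer-reduction} gives \(\mathscr L_n^{\operatorname{alg}}(f)\equiv A_n(f)^{\varphi(p^n)}\pmod{p\mathcal O}\), and \(p\mathcal O\subseteq\mathfrak m\). Hence \(\mathscr L_n^{\operatorname{alg}}(f)\in\mathfrak m\) if and only if \(A_n(f)\in\mathfrak m\), using that \(\varphi(p^n)\geq1\) and that \(\mathfrak m\) is prime. By Corollary~\ref{cor:An-bottom-vanishing}, which is where ordinarity and the non-anomalous hypothesis \eqref{eq:non-anomalous} enter, this holds if and only if \(\mathrm{L}_{K_0}^{\operatorname{alg}}(f,1)\in\mathfrak m\).

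Now combine the two steps. By the inductive hypothesis, the old factor \(\mathrm{L}_{K_{n-1}}^{\operatorname{alg}}(f,1)\) lies in \(\mathfrak m\) if and only if \(\mathrm{L}_{K_0}^{\operatorname{alg}}(f,1)\in\mathfrak m\). The new factor satisfies the same equivalence. Therefore their product lies in \(\mathfrak m\) if and only if \(\mathrm{L}_{K_0}^{\operatorname{alg}}(f,1)\in\mathfrak m\), which completes the induction. The unit formulation follows at once, because an element of \(\mathcal O\) is a unit precisely when it is not in \(\mathfrak m\).

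I do not expect a serious obstacle, since the substantive input is already contained in Corollary~\ref{cor:An-bottom-vanishing}. The only points to watch are that integrality is needed to treat everything inside \(\mathcal O\), and that the reduction modulo \(p\mathcal O\) is compatible with reduction modulo \(\mathfrak m\).
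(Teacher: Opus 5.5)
Your proof is correct and follows essentially the same route as the paper: the layer factorization \eqref{eq:algebraic-layer-factorization}, integrality of both factors, Lemma~\ref{lem:primitive-layer-reduction} reduced modulo $\mathfrak m$, and Corollary~\ref{cor:An-bottom-vanishing}. The only difference is in packaging. The paper proves the two implications separately, and the direction $\mathrm{L}_{K_0}^{\operatorname{alg}}(f,1)\in\mathfrak m\Rightarrow\mathrm{L}_{K_n}^{\operatorname{alg}}(f,1)\in\mathfrak m$ uses only integrality. You instead handle both directions in a single induction, using that $\mathfrak m$ is prime.
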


\begin{proof}
Suppose first that
\(\mathrm{L}_{K_0}^{\operatorname{alg}}(f,1)\in\mathfrak m\). Since
\[
        \mathrm{L}_{K_n}^{\operatorname{alg}}(f,1)
        =
        \mathrm{L}_{K_0}^{\operatorname{alg}}(f,1)
        \prod_{r=1}^{n}\mathscr L_r^{\operatorname{alg}}(f)
\]
and every new-layer factor is integral, all
\(\mathrm{L}_{K_n}^{\operatorname{alg}}(f,1)\) belong to
\(\mathfrak m\).

Conversely, suppose that $\mathrm{L}_{K_0}^{\operatorname{alg}}(f,1)$ is a unit. By
Corollary~\ref{cor:An-bottom-vanishing}, each \(A_n(f)\) is a unit.
Lemma~\ref{lem:primitive-layer-reduction} gives
\[
        \mathscr L_n^{\operatorname{alg}}(f)
        \equiv
        A_n(f)^{\varphi(p^n)}
        \pmod{p\mathcal O}.
\]
Since \(p\mathcal O\subseteq\mathfrak m\), reduction of this congruence
modulo \(\mathfrak m\) shows that its right-hand side is nonzero.
Therefore \(\mathscr L_n^{\operatorname{alg}}(f)\) is a unit. The
factorization \eqref{eq:algebraic-layer-factorization} now shows
inductively that every
\(\mathrm{L}_{K_n}^{\operatorname{alg}}(f,1)\) is a unit.
\end{proof}

\subsection{The case \(\kappa=\F_p\)}

\par The situation simplifies substantially when the residue field is
exactly \(\F_p\). If \(x\in\F_p^\times\), then \(x^{p-1}=1\). Since
\(\varphi(p^n)=p^{n-1}(p-1)\), it follows that
\(x^{\varphi(p^n)}=1\) for every \(x\in\F_p^\times\). Thus the
augmentation multiplier in
\eqref{eq:general-algebraic-L-congruence} has only two possible
reductions, namely \(0\) or \(1\).

\begin{proposition}
\label{prop:new-layer-zero-one}
Assume that \(f\) is ordinary,
\(a_p\not\equiv1\pmod{\mathfrak m}\), and \(\kappa=\F_p\). Then for
every \(n\geq1\),
\begin{equation}
\label{eq:new-layer-zero-one}
        \mathscr L_n^{\operatorname{alg}}(f)
        \equiv
        \begin{cases}
        1\pmod{\mathfrak m},
        &\text{if }\mathrm{L}_{K_0}^{\operatorname{alg}}(f,1)
        \notin\mathfrak m,\\[4pt]
        0\pmod{\mathfrak m},
        &\text{if }\mathrm{L}_{K_0}^{\operatorname{alg}}(f,1)
        \in\mathfrak m.
        \end{cases}
\end{equation}
\end{proposition}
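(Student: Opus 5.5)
The plan is to combine Lemma~\ref{lem:primitive-layer-reduction} with the mod~$\mathfrak m$ evaluation of $A_n(f)$ from Proposition~\ref{prop:augmentation-propagation}, and then use the fact that nonzero elements of $\F_p$ have order dividing $p-1$. First, reduce the congruence
\[
        \mathscr L_n^{\operatorname{alg}}(f)
        \equiv
        A_n(f)^{\varphi(p^n)}
        \pmod{p\mathcal O}
\]
modulo $\mathfrak m$; this is legitimate since $p\mathcal O\subseteq\mathfrak m$. In the residue field $\kappa=\F_p$ this gives
\[
        \overline{\mathscr L_n^{\operatorname{alg}}(f)}
        =
        \overline{A_n(f)}^{\,\varphi(p^n)}.
\]

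Next, invoke \eqref{eq:augmentation-bottom-general}:
\[
        \overline{A_n(f)}
        =
        \overline{a_p}^{\,n(p-1)}
        (1-\overline{a_p})
        \overline{\mathrm{L}_{K_0}^{\operatorname{alg}}(f,1)}.
\]
Ordinarity makes $\overline{a_p}\neq0$, and the non-anomalous hypothesis \eqref{eq:non-anomalous} makes $1-\overline{a_p}\neq0$. Hence $\overline{A_n(f)}=0$ exactly when $\mathrm{L}_{K_0}^{\operatorname{alg}}(f,1)\in\mathfrak m$; this is Corollary~\ref{cor:An-bottom-vanishing}.

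We then split into two cases.

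\emph{Case $\mathrm{L}_{K_0}^{\operatorname{alg}}(f,1)\in\mathfrak m$.} Here $\overline{A_n(f)}=0$. Since $\varphi(p^n)\geq1$ for $n\geq1$, the power is $0$, so $\mathscr L_n^{\operatorname{alg}}(f)\equiv0\pmod{\mathfrak m}$.

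\emph{Case $\mathrm{L}_{K_0}^{\operatorname{alg}}(f,1)\notin\mathfrak m$.} Here $\overline{A_n(f)}\in\F_p^\times$. Because $(p-1)\mid\varphi(p^n)=p^{n-1}(p-1)$, Fermat's little theorem gives $\overline{A_n(f)}^{\,\varphi(p^n)}=1$. Hence $\mathscr L_n^{\operatorname{alg}}(f)\equiv1\pmod{\mathfrak m}$.

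The only point needing care is that the hypothesis $\kappa=\F_p$ is used exactly in this last step. Over a larger residue field the unit $\overline{A_n(f)}$ need not be killed by $p-1$. All the real work, namely the propagation $A_n\equiv a_p^{\,n(p-1)}A_0$ and the bottom evaluation of Lemma~\ref{lem:A0-bottom}, is already done in Proposition~\ref{prop:augmentation-propagation}, so no genuine obstacle remains.
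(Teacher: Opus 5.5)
Your proposal is correct and is essentially the paper's own proof: reduce the congruence of Lemma~\ref{lem:primitive-layer-reduction} modulo $\mathfrak m$, use Corollary~\ref{cor:An-bottom-vanishing} to see that $A_n(f)\in\mathfrak m$ exactly when $\mathrm{L}_{K_0}^{\operatorname{alg}}(f,1)\in\mathfrak m$, and in the unit case use $(p-1)\mid\varphi(p^n)$ in $\F_p^\times$. The only cosmetic difference is that you re-derive that corollary from \eqref{eq:augmentation-bottom-general} rather than citing it directly.
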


\begin{proof}
By Corollary~\ref{cor:An-bottom-vanishing},
\[
        A_n(f)\in\mathfrak m
        \quad\Longleftrightarrow\quad
        \mathrm{L}_{K_0}^{\operatorname{alg}}(f,1)\in\mathfrak m.
\]
If \(A_n(f)\in\mathfrak m\), then
Lemma~\ref{lem:primitive-layer-reduction} gives
\(\mathscr L_n^{\operatorname{alg}}(f)\equiv0\pmod{\mathfrak m}\).

If \(A_n(f)\notin\mathfrak m\), its reduction is a nonzero element of
\(\F_p\), and hence
\(A_n(f)^{\varphi(p^n)}\equiv1\pmod{\mathfrak m}\).
Lemma~\ref{lem:primitive-layer-reduction} then gives
\(\mathscr L_n^{\operatorname{alg}}(f)\equiv1\pmod{\mathfrak m}\).
\end{proof}

\par We arrive at the main result of the section.

\begin{theorem}
\label{lthm:intro-modular}
Assume that \(f\) is ordinary at \(p\) and
\(a_p(f)\not\equiv1\pmod{\mathfrak m}\). Then
\(\mathrm{L}_{K_n}^{\operatorname{alg}}(f,1)\in\mathcal O\) for every
\(n\geq0\), and for \(n\geq1\),
\[
        \mathrm{L}_{K_n}^{\operatorname{alg}}(f,1)
        \equiv
        A_n(f)^{\varphi(p^n)}
        \mathrm{L}_{K_{n-1}}^{\operatorname{alg}}(f,1)
        \pmod{p\mathcal O}.
\]
Moreover,
\[
        \mathrm{L}_{K_n}^{\operatorname{alg}}(f,1)\in\mathfrak m
        \quad\text{if and only if}\quad
        \mathrm{L}_{K_0}^{\operatorname{alg}}(f,1)\in\mathfrak m.
\]

If \(\kappa=\F_p\), then the congruence simplifies to
\[
        \mathrm{L}_{K_n}^{\operatorname{alg}}(f,1)
        \equiv
        \mathrm{L}_{K_{n-1}}^{\operatorname{alg}}(f,1)
        \equiv
        \mathrm{L}_{K_0}^{\operatorname{alg}}(f,1)
        \pmod{\mathfrak m}.
\]
When \(\mathcal O=\Z_p\), this is a congruence modulo \(p\).
\end{theorem}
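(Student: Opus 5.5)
The theorem is essentially an assembly of results already established in this section, so the plan is to collect them in order. Integrality of \(\mathrm{L}_{K_n}^{\operatorname{alg}}(f,1)\) for every \(n\geq0\) is Proposition~\ref{prop:algebraic-L-integrality}. It uses only the integrality of the cohomological symbols \(\varphi_f^\pm\) at the bottom layer, the resultant description of \(\mathcal S_{n,i}(f)\) in Lemma~\ref{lem:primitive-layer-identification}, and the layer factorization \eqref{eq:algebraic-layer-factorization}. The general congruence
\[
\mathrm{L}_{K_n}^{\operatorname{alg}}(f,1)\equiv A_n(f)^{\varphi(p^n)}\mathrm{L}_{K_{n-1}}^{\operatorname{alg}}(f,1)\pmod{p\mathcal O}
\]
is exactly Proposition~\ref{prop:general-algebraic-L-congruence}. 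This step does not use ordinarity or the non-anomalous hypothesis.

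For the criterion ``\(\mathrm{L}_{K_n}^{\operatorname{alg}}(f,1)\in\mathfrak m\) iff \(\mathrm{L}_{K_0}^{\operatorname{alg}}(f,1)\in\mathfrak m\)'' I would quote Corollary~\ref{cor:general-unit-stability}. Its proof rests on two ingredients. The first is the propagation \(A_{n,i}\equiv a_p^nA_{0,i}\pmod{\mathfrak m}\), obtained from Lemma~\ref{lem:A1-A0} together with the augmented distribution relation \eqref{eq:augmentation-recursion}; there \(p^{k-1}\in\mathfrak m\) because \(k>2\). The second is the bottom identity of Lemma~\ref{lem:A0-bottom}. Since \(a_p\) and \(1-a_p\) are units under the hypotheses, \(A_n(f)\) is a unit exactly when \(\mathrm{L}_{K_0}^{\operatorname{alg}}(f,1)\) is.

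In the case \(\kappa=\F_p\), I would argue as follows. By Proposition~\ref{prop:new-layer-zero-one}, \(\mathscr L_n^{\operatorname{alg}}(f)\) reduces to \(1\) or \(0\) according as \(\mathrm{L}_{K_0}^{\operatorname{alg}}(f,1)\) is a unit or lies in \(\mathfrak m\); the key input is that \(x^{\varphi(p^n)}=1\) for \(x\in\F_p^\times\). Multiplying by \(\mathrm{L}_{K_{n-1}}^{\operatorname{alg}}(f,1)\) and using \eqref{eq:algebraic-layer-factorization} then splits into two cases. In the unit case we get \(\mathrm{L}_{K_n}^{\operatorname{alg}}\equiv\mathrm{L}_{K_{n-1}}^{\operatorname{alg}}\pmod{\mathfrak m}\) directly. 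In the non-unit case both sides are \(\equiv0\pmod{\mathfrak m}\) by the criterion above, so the congruence holds trivially. Inducting on \(n\) gives congruence with \(\mathrm{L}_{K_0}^{\operatorname{alg}}(f,1)\). When \(\mathcal O=\Z_p\) we have \(\mathfrak m=p\Z_p\), which gives the final sentence.

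The only genuinely delicate input is Lemma~\ref{lem:A1-A0}, whose trivial-character case is where the anomalous factor \(1-a_p\) and the \(p^{k-2}\) error term appear; its dependence on \(k>2\) is the main obstacle. Since that lemma is already proved, the assembly itself should present no obstacle beyond checking that the \(0/1\) dichotomy is compatible with the inductive factorization.
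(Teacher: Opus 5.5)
Your proposal is correct and follows the paper's proof essentially line for line. Integrality, the general congruence modulo $p\mathcal O$, and the $\mathfrak m$-stability criterion are quoted from Proposition~\ref{prop:algebraic-L-integrality}, Proposition~\ref{prop:general-algebraic-L-congruence} and Corollary~\ref{cor:general-unit-stability}, and the case $\kappa=\F_p$ is handled as in the paper: the $0/1$ dichotomy of Proposition~\ref{prop:new-layer-zero-one}, the factorization \eqref{eq:algebraic-layer-factorization}, the trivial non-unit case, and induction down to $K_0$. (One small attribution slip: the factor $1-a_p$ that makes the non-anomalous hypothesis necessary comes from the bottom computation in Lemma~\ref{lem:A0-bottom}, whereas Lemma~\ref{lem:A1-A0} only supplies the base case $A_{1,i}\equiv a_pA_{0,i}\pmod{\mathfrak m}$ of the propagation.)
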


\begin{proof}
The integrality assertion, the general congruence, and the stability of
divisibility by \(\mathfrak m\) follow from
Proposition~\ref{prop:algebraic-L-integrality},
Proposition~\ref{prop:general-algebraic-L-congruence}, and
Corollary~\ref{cor:general-unit-stability}, respectively.

Assume now that \(\kappa=\F_p\). If
\(\mathrm{L}_{K_0}^{\operatorname{alg}}(f,1)\notin\mathfrak m\), then
Proposition~\ref{prop:new-layer-zero-one} gives
\(\mathscr L_n^{\operatorname{alg}}(f)\equiv1\pmod{\mathfrak m}\).
Hence, by \eqref{eq:algebraic-layer-factorization},
\[
        \mathrm{L}_{K_n}^{\operatorname{alg}}(f,1)
        \equiv
        \mathrm{L}_{K_{n-1}}^{\operatorname{alg}}(f,1)
        \pmod{\mathfrak m}.
\]
If \(\mathrm{L}_{K_0}^{\operatorname{alg}}(f,1)\in\mathfrak m\), then
Corollary~\ref{cor:general-unit-stability} shows that all
\(\mathrm{L}_{K_r}^{\operatorname{alg}}(f,1)\) lie in
\(\mathfrak m\), so the same congruence holds trivially. Iterating
gives the congruence with
\(\mathrm{L}_{K_0}^{\operatorname{alg}}(f,1)\).
\end{proof}

\bibliographystyle{alpha}
\bibliography{references}
\end{document}